\newtheorem{thm}{Theorem}[section]
\newtheorem{prop}[thm]{Proposition}    
\newtheorem*{rlem}{Ramsey's Theorem} 
\newtheorem*{slem}{Szemer\'{e}di's Theorem}    
\newtheorem*{plem}{Peano's Theorem}     
\theoremstyle{definition}
\newtheorem{defn}[thm]{Definition}
\newtheorem*{conjecture}{Conjecture}
\newcommand{\bbU}{\mathbb{U}}
\newcommand{\bbV}{\mathbb{V}}
\newcommand{\bbH}{\mathbb{H}}
\newcommand{\bbM}{\mathbb{M}}
\newcommand{\bbI}{\mathbf{I}}
\newcommand{\bbP}{\mathbb{P}}
\newcommand{\bbQ}{\mathbb{Q}}
\newcommand{\bbR}{\mathbb{R}}
\newcommand{\bbN}{\mathbb{N}}
\newcommand{\bbS}{\mathbb{S}}
\newcommand{\cG}{\mathcal{G}}
\newcommand{\cM}{\mathcal{M}}
\newcommand{\cP}{\mathcal{P}}
\newcommand{\Ul}{\mathfrak{U}}
\newcommand{\st}{\operatorname{\mathbf{st}}}
\newcommand{\sr}{\operatorname{\mathbf{sr}}}
\newcommand{\sha}{\operatorname{\mathbf{sh}}}
\newcommand{\dom}{\operatorname{dom}}
\newcommand{\fin}{\operatorname{\mathbf{fin}}}
\newcommand{\rank}{\operatorname{rank}}
\newcommand{\ra}{\rangle}
\newcommand{\la}{\langle}
\newcommand{\eqi}{\; \leftrightarrow \;}
\newcommand{\al}{\alpha}
\newcommand{\ep}{\varepsilon}
\newcommand{\om}{\omega}
\newcommand{\ZFC}{\textbf{ZFC}}
\newcommand{\IST}{\textbf{IST}}
\newcommand{\HST}{\textbf{HST}}
\newcommand{\GRIST}{\textbf{GRIST}}
\newcommand{\SPOT}{\mathbf{SPOT}}
\newcommand\astx{{}^\ast\!X}
\newcommand{\infy}{\operatorname{\mathbf{aa}}}
\newcommand{\ZF}{\mathbf{ZF}}
\newcommand{\SCOT}{\mathbf{SCOT}}
\newcommand{\SPOTS}{\mathbf{SPOTS}}
\newcommand{\SCOTS}{\mathbf{SCOTS}}
\newcommand{\ACC}{\mathbf{ACC}}
\newcommand{\ADC}{\mathbf{ADC}}
\newcommand{\SP}{\mathbf{SP}}
\newcommand{\SC}{\mathbf{SC}}
\newcommand{\SN}{\mathbf{SN}}
\newcommand{\CC}{\mathbf{CC}}
\newcommand{\DC}{\mathbf{DC}}
\newcommand{\AC}{\mathbf{AC}}
\newcommand{\T}{\mathbf{T}}
\newcommand{\bs}{\operatorname{\mathbf{ir}}}
\newcommand{\wpi}{\Pi}
\newcommand{\uhr}{\upharpoonright}
\newcommand{\sfa}{\mathsf{a}}
\newcommand{\sfc}{\mathsf{b}}
\newcommand{\sfm}{\mathsf{m}}
\newcommand{\sfn}{\mathsf{n}}
\begin{document}\title {Multi-level Nonstandard Analysis and the Axiom of Choice
}

\author{Karel Hrbacek}
\address{Department of Mathematics\\                City College of CUNY\\                New York, NY 10031\\}                \email{khrbacek@icloud.com}

\keywords{nonstandard analysis; levels of standardness; levels of infinity; axiom of choice;  iterated ultrapower, Szemer\'{e}di's theoerm}
\date{August 25, 2024}

\begin{abstract}
Model-theoretic frameworks for Nonstandard Analysis depend on the  existence of nonprincipal ultrafilters, a strong form of the Axiom of Choice ($\AC$). Hrbacek and Katz, APAL 72 (2021) formulate axiomatic nonstandard set theories $\SPOT$ and $\SCOT$ that are conservative extensions of respectively $\ZF$ and $\ZF + \ADC$ (the Axiom of Dependent Choice), and in which a significant part of Nonstandard Analysis can be developed. 
The present paper extends these theories to  theories with many levels of standardness, called respectively
$\mathbf{SPOTS}$ and $\mathbf{SCOTS}$.  It shows that Jin's recent nonstandard proof of Szemer\'{e}di's Theorem can be carried out in $\mathbf{SPOTS}$. The theories $\mathbf{SPOTS}$ and $\mathbf{SCOTS}$ are conservative over $\ZF + \ADC$.

\end{abstract}

\maketitle

\emph{This version corrects the  formulation of Proposition 3.3 (Factoring Lemma) on page 10 and revises  the notion of admissible formula (Definition 4.1 on page 11). While the new formulations suffice for the proofs in Sections 5 and 6,  Proposition 4.5  on page 13 of the previous version cannot be proved in $\SCOTS$ and the previous  Section 7,  which depends on this proposition in an essential way, is unjustified.}

\bigskip
\emph{The claim that $\SPOTS$ is conservative over $\ZF + \ACC$ made in Theorem 4.7 
%(and referred to in the Abstract and the penultimate sentence of Section 1) 
is also unproved. The argument given for it in Subsection~8.5 fails because in the absence of $\ADC$ the forcing with $\bbP$ over $\cM$ might add new countable sets. This invalidates the Factoring Lemma and hence the claim that $\mathbf{HO}$ holds in $\cM_\infty$ made in the paragraph before Proposition~8.6.
}

\bigskip
\emph{This material is omitted in the current version.}

\newpage
\tableofcontents

\section{Introduction}

Nonstandard Analysis  is sometimes criticized for its implicit dependence on the Axiom of Choice ($\AC$) (see eg  Connes~\cite{C}\footnote{
Detailed examination of  Connes's views is carried out in Kanovei, Katz and Mormann~\cite{KKM}, Katz and Leichtnam~\cite{KL} and Sanders~\cite{Sa2}.
}).
Indeed, model-theoretic frameworks 
based on hyperreals require the existence of nonprincipal ultrafilters over $\mathbb{N}$, a strong form of $\AC$:
If~$\ast$ is the mapping that assigns to each $X \subseteq \bbN$ its nonstandard extension $\astx$, and if $\nu \in {}^\ast \bbN \setminus \bbN$ is an unlimited integer, then the set
$U = \{ X \subseteq \bbN \,\mid\, \nu \in \astx\}$ is a nonprincipal ultrafilter over $\bbN$.

The common axiomatic/syntactic frameworks for nonstandard methods (see Kanovei and Reeken~\cite{KR}), such as $\IST$ or $\HST$, 
include $\ZFC$ among their axioms.
The dependence on $\AC$ cannot be avoided by simply removing it from the list of axioms (see Hrbacek~\cite{H1}).
These theories postulate some version of the \emph{Standardization Principle}:

 \emph{For every formula $\Phi(x)$ in the language of the theory (possibly with additional parameters) and every standard set $A$ 
there exists a standard set $S$ such that for all standard~$x$}
$x \in S \eqi x\in A \,\wedge\, \Phi(x).$

This set is denoted \; ${}^{\st} \{x \in A \mid \Phi(x)\}$.
It follows that, for an unlimited $\nu \in \bbN$,  the standard set \;$U = {}^{\st} \{ X \in  \cP(\bbN) \,\mid\, \nu \in X\}$ is a nonprincipal ultrafilter over $\bbN$.
 
\bigskip
\emph{All work in Nonstandard Analysis based on these two familiar frameworks thus depends essentially on the Axiom of Choice.}\footnote{
Nonstandard Analysis that does not use $\AC$, or uses only weak versions of it,  can be found in  the work of Chuaqui, Sommer and Suppes (see eg~\cite{SS}), in papers on Reverse Mathematics of Nonstandard Analysis (eg Keisler~\cite{K3},  Sanders~\cite{Sa}, van den Berg et al~\cite{VBS} and others), and in the work  of Hrbacek and Katz~\cite{HK, HK2, HK3} and the present paper, based on $\SPOT$/$\SCOT$.}

\bigskip
While strong forms of $\AC$ (Zorn's Lemma, Prime Ideal Theorem) are instrumental in many abstract areas of mathematics, such as general topology (the product of compact spaces is compact), measure theory (there exist sets that are not Lebesgue measurable) or functional analysis (Hahn-Banach theorem), it is undesirable to have to rely on them for results in ``ordinary'' mathematics such as calculus, finite  combinatorics and  number theory.\footnote{ The issue is not the validity of such results but the method of proof. It is a consequence of Shoenfield's Absoluteness Theorem (Jech~\cite{J}, Theoerm 98) that all $\Pi^1_4$ sentences of second-order arithmetic that are provable in $\ZFC$ are also provable in $\ZF$. Most theorems of 
number theory and real analysis (eg Peano's Theorem; see  Hanson's answer in~\cite{MO}) can be formalized as $\Pi^1_4$ statements. 
But the $\ZF$ proofs obtained from $\ZFC$ proofs by this method are far from ``ordinary.''}
 
Hrbacek and Katz~\cite{HK}
introduced nonstandard  set theories $\SPOT$ and $\SCOT$.
In order to avoid the reliance on $\AC$, Standardization needs to be weakened.
The theory  $\SPOT$ has three simple axioms: Standard Part, Nontriviality and Transfer. 
It is a subtheory of  the  better known nonstandard set theories $\IST$ and $\HST$, but unlike them, it is a conservative extension of $\ZF$. Arguments carried out in $\SPOT$ thus do not depend on any form of $\AC$. 
Infinitesimal analysis can be conducted in $\SPOT$ in the usual way. 
It only needs to be verified that any use of Standardization 
can be justified by the special cases of this principle that are available in $\SPOT$.

Traditional proofs in ``ordinary'' mathematics either do not use $\AC$ at all, or refer only to its weak forms, notably the Axiom of Countable Choice ($\ACC$) or the stronger Axiom of Dependent Choice ($\ADC$). These axioms are generally accepted and often used without comment. 
They are necessary to prove eg 
the equivalence of the $\ep$--$\delta$ definition and the sequential definition of continuity at a given point for functions $f: X \subseteq \bbR \to \bbR$, or the countable additivity of Lebesgue measure,
but they do not imply the strong consequences of $\AC$ such as the existence of nonprincipal ultrafilters or the Banach--Tarski paradox.
The theory $\SCOT$ is obtained by adding to $\SPOT$ the external version of the Axiom of Dependent Choice; it is a conservative extension of $\ZF+ \ADC$.

Nonstandard Analysis with \emph{multiple levels of standardness} has been used in combinatorics and number theory by  Renling Jin, Terence Tao, Mauro Di Nasso and others.
Jin~\cite{RJ}  recently gave a groundbreaking nonstandard proof of Szemer\'{e}di's Theorem
 in a model-theoretic framework that has three levels of infinity.
\begin{slem}
If $D\subseteq \bbN$ has a positive upper density, then $D$ contains a $k$-term arithmetic progression for every $k \in \bbN$.
\end{slem}

The main objective of this paper is to extend the above two  theories to theories $\SPOTS$ and $\SCOTS$ with many levels of standardness and consider their relationship  to $\AC$.
An outline of $\SPOT$ and $\SCOT$ is given in Section~\ref{spot}.  
Section~\ref{ultrapower} reviews the familiar properties of ultrapowers and iterated ultrapowers in a form suitable 
to motivate  multi-level versions of these theories, which are formulated in Section~\ref{spots}.
The next two sections illustrate various ways to use multiple levels of standardness.
In Section~\ref{ramsey} Jin's proof of Ramsey's Theorem is formalized in $\mathbf{SPOTS}$, and Section~\ref{szemeredi} explains how Jin's proof of Szemer\'{e}di's Theorem can be developed in it.  
Finally, in Section~\ref{conservativity} it is proved  that $\SCOTS$ is a conservative extension of $\ZF + \ADC$ and that $\SPOTS$ is conservative over $\ZF + \ACC$.
It is an open problem whether $\SPOTS$ is a conservative extension of $\ZF$.

%%%%%%%%%%%%%%%%%%%%%%%%%%%%%%%%%%%%%%%%%%%%%%%%%%%%%%%%%%%%%%%

\section{Theories \textbf{SPOT} and \textbf{SCOT}} \label{spot}

By an $\in$--language we mean the language that has a primitive binary  membership predicate $\in$. 
The classical theories $\ZF$ and $\ZFC$ are formulated in the $\in$--language.  
It is enriched by defined symbols for constants, relations,  functions and operations customary in traditional mathematics. 
For example, it contains names $\bbN$ and $\bbR$ for the sets of natural and real numbers; they are viewed as defined in the traditional way ($\bbN$ is the least inductive set, $\bbR$ is defined in terms of Dedekind cuts or Cauchy sequences). 

Nonstandard set theories, including  $\SPOT$ and $\SCOT$, are formulated in the $\st$--$\in$--language. They  add to the $\in$--language a unary predicate symbol $\st$, where $\st(x)$ reads ``$x$ is standard,'' and possibly other symbols.
They postulate that standard infinite sets contain also nonstandard elements. For example, $\bbR$ contains infinitesimals and unlimited reals, and $\bbN$ contains unlimited natural numbers.

We use $\forall$ and $\exists$ as quantifiers over sets and $\forall^{\st}$
and $\exists^{\st}$ as quantifiers over standard sets.  
All free variables of a formula $\Phi(v_1,\ldots,v_k)$ are assumed to be among  $v_1,\ldots,v_k$ \emph{unless} explicitly specified otherwise. This is usually done informally by saying that the formula has \emph{parameters} (ie, additional free variables), possibly restricted to objects of a certain kind).

The axioms of $\SPOT$  are:

$\ZF$ (Zermelo - Fraenkel Set Theory; Separation and Replacement schemata apply to $\in$--formulas only).

\bigskip
$\T$ (Transfer) 
Let $\phi(v)$ be an $\in$--formula with standard parameters. Then
$$\forall^{\st} x\; \phi(x)  \to  \forall x\; \phi(x).$$

\bigskip
$\mathbf{O}$ (Nontriviality)  \quad
$\exists \nu \in \bbN\; \forall^{\st} n \in \bbN\; (n \ne \nu)$.

\bigskip
$\SP$  (Standard Part)
$$ \forall  A \subseteq \bbN \; \exists^{\st} B \subseteq \bbN \; \forall^{\st}  n \in \bbN \;
(n \in B \eqi n \in A). $$

We state some general results provable in $\SPOT$ (Hrbacek and Katz~\cite{HK}).

\begin{prop} \label{HKP1}
\emph{Standard natural numbers precede all nonstandard natural numbers:} \quad
$$\forall^{\st} n \in \bbN \; \forall m \in \bbN \;(m < n \to \st(m) ).$$
\end{prop}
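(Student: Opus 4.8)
The plan is to argue by contradiction, since $\SPOT$ does not prove an external induction schema (its Separation and Replacement are restricted to $\in$-formulas), so a direct induction on $n$ is not available. I would fix a standard $n \in \bbN$ and some $m \in \bbN$ with $m < n$, and suppose toward a contradiction that $\neg\,\st(m)$. The key device will be the Standard Part axiom $\SP$: it lets me replace the (in general nonstandard) initial segment $\{k \in \bbN : k < m\}$ by a \emph{standard} set $B$ with the same standard elements, and the maximum of $B$ will then produce the contradiction.

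First I would record a routine consequence of Transfer: any object uniquely characterized by an $\in$-formula with standard parameters is itself standard. Indeed, if $\theta(x,\bar a)$ is an $\in$-formula, $\bar a$ are standard, and $\ZF \vdash \exists !\,x\;\theta(x,\bar a)$, then applying the contrapositive of $\T$ to $\neg\theta$ yields $\exists^{\st}x\;\theta(x,\bar a)$, and by uniqueness this standard witness is the object in question. In particular $0 = \emptyset$ is standard, $\st(k) \to \st(k+1)$ (as $k+1 = k \cup \{k\}$ is $\in$-definable from $k$), and the maximum of any nonempty finite standard subset of $\bbN$ is standard. Note also that $m \neq 0$, since $0$ is standard and $m$ is not, so $m \geq 1$.

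Now apply $\SP$ to $A = \{k \in \bbN : k < m\}$ to get a standard $B \subseteq \bbN$ with $\forall^{\st}k \in \bbN\,(k \in B \eqi k < m)$. Every standard element of $B$ is $< m < n$; since $B \subseteq \bbN$, the $\in$-formula $k \in B \to (k \in \bbN \wedge k < n)$ holds for every standard $k$, so by $\T$ (with the standard parameters $B$ and $n$) it holds for every $k$, i.e.\ $B \subseteq n$. Hence $B$ is a subset of the standard finite set $n$, so $B$ is finite; and $0 \in B$ because $0 < m$ and $0$ is standard, so $B \neq \emptyset$. Let $p = \max B$; by the observation above $p$ is standard, so $p \in B$ forces $p < m$, hence $p + 1 \leq m$, and $p + 1$ is standard. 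If $p + 1 < m$ then $p + 1 \in A$ with $p+1$ standard, so $p + 1 \in B$ and $p + 1 \leq \max B = p$, which is absurd; therefore $p + 1 = m$, contradicting that $p+1$ is standard while $m$ is not. Thus $\st(m)$, and the proposition follows.

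The main obstacle is conceptual rather than computational: because $\SPOT$ lacks external induction, one must obtain an ``internal'' grip on the external predicate $\st$, and $\SP$ supplies exactly this by converting the external initial segment below $m$ into a standard finite set whose standard maximum $p$ forces the collapse $p + 1 = m$. The only points needing care are the bookkeeping in the Transfer applications — ensuring the $\in$-formulas quantify over all sets, which is harmless for non-natural-number values since $B \subseteq \bbN$ — and the remark that finiteness of $B$, needed so that $\max B$ exists, is free once we know $B \subseteq n$.
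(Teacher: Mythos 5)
Your proof is correct. Note that the paper itself gives no argument for this proposition --- it is quoted from Hrbacek--Katz \cite{HK} --- and your argument is essentially the standard one used there: apply $\SP$ to the initial segment $\{k \in \bbN : k < m\}$, use Transfer (with the \emph{standard} bound $n$, since $m$ cannot serve as a parameter) to see the resulting standard set $B$ is contained in the finite set $n$, and then the standard maximum $p$ of $B$ forces $m = p+1$ to be standard. All the Transfer applications (standardness of $0$, of successors, of $\max B$, and the step $B \subseteq n$) are handled correctly, so there is no gap.
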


\begin{prop} \emph{(Countable Idealization)}
Let $\phi(u,v)$ be an $\in$--formula with arbitrary parameters.
$$\forall^{\st}  n \in \bbN\;\exists x\; \forall m \in \bbN\; (m \le n \;\to  \phi(m,x))\eqi 
\exists x \; \forall^{\st} n \in \bbN \; \phi(n,x).$$
\end{prop}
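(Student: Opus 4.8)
The plan is to prove the two implications separately; $\Leftarrow$ is immediate and $\Rightarrow$ carries all the content. For $\Leftarrow$ I would take a witness $x$ for $\exists x\;\forall^{\st} n \in \bbN\;\phi(n,x)$ and check that this same $x$ serves as the witness demanded by the left-hand side for each standard $n$: if $m \in \bbN$ and $m \le n$ with $n$ standard, then $m$ is standard by Proposition~\ref{HKP1}, so $\phi(m,x)$ holds.

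For $\Rightarrow$, assume $\forall^{\st} n \in \bbN\;\exists x\;\forall m \in \bbN\;(m \le n \to \phi(m,x))$. First, since $\phi$ is an $\in$--formula, so is $\theta(n) \equiv \exists x\;\forall m \in \bbN\;(m \le n \to \phi(m,x))$ — its parameters are exactly those of $\phi$, which may be arbitrary — so $\in$--Separation shows that $E := \{n \in \bbN \mid \theta(n)\}$ is a set, and by hypothesis every standard natural number lies in $E$. The key step is to deduce that $E$ also contains a \emph{nonstandard} natural number. For this I would use that the collection of standard natural numbers is itself not a set: it is downward closed by Proposition~\ref{HKP1}, it contains $0$ and is closed under successor (both consequences of Transfer), yet it omits the element $\nu$ furnished by Nontriviality ($\mathbf{O}$), and no subcollection of $\bbN$ with all these properties can be a set — if it were, being downward closed it would equal an initial segment $\{m \in \bbN \mid m < N\}$ with $N \ge 1$, forcing $N$ to be nonstandard while $N-1$ is standard, contradicting closure under successor. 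Since $E$ is a set containing every standard natural number, it must therefore contain some nonstandard $\mu$. Finally, fixing $x$ with $\forall m \in \bbN\;(m \le \mu \to \phi(m,x))$, every standard $n$ satisfies $n \le \mu$ — otherwise $\mu < n$ would make $\mu$ standard by Proposition~\ref{HKP1} — so $\phi(n,x)$ holds for every standard $n$, and this $x$ witnesses $\exists x\;\forall^{\st} n \in \bbN\;\phi(n,x)$.

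I expect the one genuinely nontrivial point to be this ``overflow'' step, passing from ``$E$ contains all standard naturals'' to ``$E$ contains a nonstandard natural'': it is a small instance of Overspill, and the only place where the axiom $\mathbf{O}$ (and hence the specific strength of $\SPOT$ over $\ZF$) really enters. Everything else is bookkeeping — that $\theta$, despite its unbounded existential quantifier and arbitrary parameters, is an admissible instance of the $\in$--Separation schema, and the repeated appeals to Proposition~\ref{HKP1}.
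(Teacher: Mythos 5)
Your proof is correct. The paper itself gives no proof of this proposition (it is quoted from Hrbacek--Katz~\cite{HK}), and your argument is exactly the standard one used there: both directions check out, the appeal to Separation is legitimate in $\SPOT$ since $\theta$ is an $\in$--formula (arbitrary set parameters are allowed), and the overspill step — that the standard naturals, being downward closed, containing $0$, closed under successor, and omitting $\nu$, cannot form a set, so $E$ must contain a nonstandard element — is sound and is indeed the only place where $\mathbf{O}$ and the nonstandard axioms enter, with Proposition~\ref{HKP1} handling the remaining bookkeeping.
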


The  dual form of Countable Idealization is
$$\exists^{\st}  n \in \bbN\;\forall x\; \exists m \in \bbN\; (m \le n \,\wedge\,  \phi(m,x))
\eqi \forall x \; \exists^{\st} n \in \bbN \; \phi(n,x) .$$
 
Countable Idealization easily implies the following more familiar form. We use  $\forall^{\st \fin}$ and $\exists^{\st \fin}$ as quantifiers
over standard finite sets.

 \label{corcountideal}
\emph{Let $\phi(u,v)$ be an $\in$--formula with arbitrary parameters.  
 For every standard countable
set}~$A$
\[
\forall^{\st \fin} a \subseteq A \, \exists x \, \forall y \in a\;
\phi(x,y) \eqi \exists x\, \forall^{\st} y \in A\; \phi(x,y) .
\]

The axiom $\SP$ is often used in the equivalent
form
\begin{equation}
 \forall x \in \bbR \;(x \text{ limited } \to \exists^{\st} r \in \bbR \;(x \simeq r)) \tag{$\SP'$}.
\end{equation}
We recall that $x$ is \emph{limited} iff $|x| \le n$ for some standard $n \in \bbN$, and $x \simeq r$ iff $|x - r| \le 1/n $ for all standard $n \in \bbN$, $n \neq 0$; $x$ is \emph{infinitesimal} if $x \simeq 0 \,\wedge\, x \neq 0$. 
The unique standard real number $r$ is called the \emph{standard part of} $x$ or the \emph{shadow of} $x$; notation $r = \sha(x)$.
 
The axiom $\SP$
is also equivalent to Standardization over countable sets for $\in$--formulas (with arbitrary parameters):

\emph{Let $\phi(v)$ be an $\in$--formula with arbitrary parameters. Then}
\begin{equation}
\exists^{\st} S\; \forall^{\st} n \; (n \in S \eqi n \in \bbN
\,\wedge\, \phi(n)).  \tag{$\SP''$}
\end{equation}

\begin{proof}
Let $A = \{ n \in \bbN \,\mid\, \phi(n) \}$ and apply  $\SP$.
\end{proof}

The ``nonstandard'' axioms of $\SPOT$  extend to $\ZF$ the insights of Leibniz about real numbers (see Bair et al~\cite{BBE, 20z}, Katz and Sherry~\cite{KS} and Katz, Kuhlemann and Sherry~\cite{KKS}):

\begin{itemize}
\item
Assignable vs inassignable distinction 
[standard vs nonstandard]
\item
Law of continuity [Transfer]
\item
Existence of infinitesimals [Nontriviality]
\item
Equality up to infinitesimal terms that need to be discarded  
[Standard Part]
\end{itemize}
This can be taken as a justification of the axioms of $\SPOT$ independent of the proof of its conservativity ove $\ZF$.

\bigskip
The scope of the axiom schema $\mathbf{SP''}$ can be extended.

\begin{defn}
An $\st$--$\in$--formula  $\Phi(v_1, \ldots,v_r)$ is \emph{special} if it is of the form 
$$\mathsf{Q}^{\st} u_1\ldots \mathsf{Q}^{\st} u_s \,\psi(u_1,\ldots ,u_s,v_1, \ldots,v_r)$$
 where $\psi $ is an $\in$--formula and each $\mathsf{Q}$ stands for $\exists$ or $\forall $.

We use $\forall^{\st}_{\bbN} \,u\ldots$ and $\exists^{\st}_{\bbN}\, u \ldots$ as shorthand for, respectively,
$\forall^{\st} u\,( u \in \bbN \to \ldots)$ and $\exists^{\st} u \,(u \in \bbN  \,\wedge\, \ldots)$. 

An $\bbN$-\emph{special} formula is a formula  of the form 
\[
\mathsf{Q}^{\st}_{\bbN} u_1\ldots \mathsf{Q}^{\st}_{\bbN} u_s \,\psi(u_1,\ldots u_s, v_1, \ldots,v_r)
\]
where $\psi$ is an $\in$--formula.
\end{defn}
 
\begin{prop}$(\SPOT)$ \emph{ (Countable Standardization for $\bbN$-Special Formulas)}\label{cssf}
Let $\Phi(v)$ be an $\bbN$-special formula with arbitrary parameters. Then
\[
\exists^{\st} S\; \forall^{\st} n \; (n \in S \eqi n \in \bbN
\,\wedge\, \Phi(n)).
\]
\end{prop}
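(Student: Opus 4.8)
The plan is to prove, by induction on the number $s$ of leading standard--over--$\bbN$ quantifiers, the following multi-variable version (which, after coding tuples of naturals as naturals by a standard bijection, is no stronger than the stated form): for every $\bbN$-special formula $\Phi(v_1,\ldots,v_k)$ with arbitrary parameters there is a standard $S\sus\bbN^k$ with $(n_1,\ldots,n_k)\in S\eqi n_1,\ldots,n_k\in\bbN\,\wedge\,\Phi(n_1,\ldots,n_k)$ for all standard $n_1,\ldots,n_k$; the proposition is the case $k=1$. The base case $s=0$ is exactly $\SP''$: given an $\in$--formula $\psi(\bar n)$ and a standard bijection $c:\bbN^k\to\bbN$, apply $\SP''$ to the $\in$--formula $\exists\bar n\in\bbN^k\,(m=c(\bar n)\,\wedge\,\psi(\bar n))$ to get a standard $S_0\sus\bbN$, and take $S:=c^{-1}[S_0]$. (One uses here that a standard map sends standard tuples to standard naturals and conversely, which is an instance of the general fact, noted below, that a set $\in$--definable from standard parameters is standard.)

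The engine of the induction is the observation $(\ast)$: if $A\sus\bbN$ is a \emph{standard} set, then $\exists u\in\bbN\,(u\in A)\eqi\exists^{\st}u\in\bbN\,(u\in A)$, and dually $\forall u\in\bbN\,(u\in A)\eqi\forall^{\st}u\in\bbN\,(u\in A)$. For the first, the nontrivial direction is $\T$ (Transfer) applied to the $\in$--formula ``$u\notin A$'' with standard parameter $A$, read contrapositively: $\exists u\,(u\in A)\to\exists^{\st}u\,(u\in A)$. The second follows by applying the first to the standard set $\bbN\setminus A$. Similarly, if $a$ is the unique solution of an $\in$--formula $\chi(x,\bar p)$ with $\bar p$ standard, then by $\T$ there is a standard $x$ with $\chi(x,\bar p)$, and uniqueness forces it to equal $a$; hence any set $\in$--definable from standard parameters is standard.

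For the inductive step write $\Phi(\bar v)=\mathsf{Q}^{\st}_{\bbN}u\,\Phi'(u,\bar v)$, where $\Phi'\equiv\mathsf{Q}^{\st}_{\bbN}u_2\cdots\mathsf{Q}^{\st}_{\bbN}u_s\,\psi(u,u_2,\ldots,u_s,\bar v)$ is $\bbN$-special with $s-1$ leading quantifiers and $k+1$ distinguished variables; by the inductive hypothesis fix a standard $S'\sus\bbN^{k+1}$ standardizing $\Phi'$. Put $S:=\{\bar n\in\bbN^k\mid\mathsf{Q}\,u\in\bbN\,(u,\bar n)\in S'\}$, using the matching unrestricted quantifier $\mathsf{Q}$. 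Then $S$ is standard, being $\in$--definable from the standard parameter $S'$. For standard $\bar n\in\bbN^k$ the slice $\{u\in\bbN\mid(u,\bar n)\in S'\}$ is standard, so $(\ast)$ replaces the unrestricted quantifier in the definition of $S$ by the corresponding $\st$--restricted one; substituting $\Phi'(u,\bar n)$ for $(u,\bar n)\in S'$ (legitimate for standard $u,\bar n$ by the choice of $S'$) yields $\bar n\in S\eqi\mathsf{Q}^{\st}u\in\bbN\,\Phi'(u,\bar n)\eqi\Phi(\bar n)$, as required.

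The point to flag is the handling of the \emph{arbitrary} parameters: one cannot simply ``erase all the stars'', since for a non-standard parameter the relevant slices of a defined set need not be standard, so $(\ast)$ would not apply. The induction evades this by confining all contact with the parameters to the single application of $\SP''$ in the base case; every higher level of the recursion manipulates only the already-standard sets $S'$, where $(\ast)$ and the definability--closure of the standard sets are available. This is where the main work lies; everything else is bookkeeping with the pairing function.
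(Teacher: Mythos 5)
Your proposal is correct and is essentially the paper's argument: both apply $\SP''$ once to the $\in$--matrix $\psi$ with the quantified variables adjoined as extra coordinates, obtaining a standard set, and then use Transfer to replace unrestricted quantifiers over $\bbN$ by $\st$--restricted ones when the parameters are standard. The only difference is organizational — you peel the quantifier prefix one step at a time by induction (with explicit coding and the definability-implies-standard lemma), while the paper handles the whole block at once via the set $S=\bigcap_{n_1}\bigcup_{n_2}\bigcap_{n_3}R_{n_1,n_2,n_3}$, which is exactly what your recursion produces when unfolded.
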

Of course, $\bbN$ can be replaced by any standard countable set.

\begin{proof}
We give the argument for a typical case
$$\forall^{\st}_\bbN  u_1\,\exists^{\st}_\bbN u_2\,\forall^{\st}_\bbN u_3 \;\psi(u_1,u_2,u_3, v).$$
By $\SP''$ there is a standard set $R$ such that for all standard $n_1, n_2, n_3, n$
$$\la  n_1, n_2, n_3, n \ra \in R \eqi \la n_1, n_2, n_3, n \ra \in \bbN^4 \,\wedge\, \psi(n_1, n_2, n_3,n).$$ 
We let $R_{n_1, n_2, n_3} = \{n \in \bbN \,\mid\, \la n_1, n_2, n_3, n \ra \in R\}$ and 
$$S = \bigcap_{n_1 \in \bbN}\, \bigcup_{n_2 \in \bbN}\,\bigcap_{n_3 \in \bbN} \,R_{n_1, n_2, n_3} .$$
Then $S$ is standard and for all standard $n$:
\[
\begin{aligned}
n \in S & \eqi 
 \forall n_1 \in \bbN\,\exists n_2 \in \bbN\,\forall n_3 \in \bbN\; (n \in R_{n_1, n_2, n_3} )
\\&\eqi
\text{(by Transfer) }
\forall^{\st}_{\bbN} n_1 \,\exists^{\st}_{\bbN} n_2 \;\forall^{\st}_{\bbN} n_3 \; (n \in R_{n_1, n_2, n_3}) 
\\&\eqi
\text{ (by definition of $R$) }
  \forall^{\st}_{\bbN} n_1 \,\exists^{\st}_{\bbN} n_2 \;\forall^{\st}_{\bbN} n_3 \;\psi(n_1, n_2, n_3,n)
\\&\eqi 
\Phi(n). 
\end{aligned} 
\]
\end{proof}

Infinitesimal calculus can be developed in $\SPOT$  as far as the global version of Peano's Theorem;  
see Hrbacek and Katz~\cite{HK2, HK3}.

\begin{plem}
Let $F: [0, \infty)  \times \mathbb{R}  \to \mathbb{R}$ be a continuous function.
There is an interval $[0, a) $ with $0 < a \le \infty$
and a function $y : [0,a) \to \mathbb{R}$ such that
\[
\qquad y(0)= 0,\quad
\quad y'(x) =F(x,y(x)) \quad
\]
holds for all $ x \in [0, a)$, and if $a \in \mathbb{R}$ then $\lim_{x\to a^-} y(x) = \pm \infty$.
\end{plem}

We note that traditional proofs of the global version of Peano's Theorem use Zorn's Lemma or  the Axiom of Dependent Choice.

It is useful to extend $\SPOT$ by  two additional special cases of Standardization.

\bigskip
$\SN$  (Standardization for $\st$--$\in$--formulas with no parameters or, equivalently, with only standard parameters)
\emph{Let $\Phi (v)$ be an $\st$--$\in$--formula with standard parameters. Then}
$$\forall^{\st} A\, \exists^{\st}  S\; \forall^{\st} x\; (x \in S \eqi x \in A \,\wedge\,\Phi(x)).$$

\bigskip
\textbf{SF} (Standardization over standard finite sets) 
\emph{Let $\Phi(v)$ be an $\st$--$\in$--formula
with arbitrary parameters. 
Then}
$$ \forall^{\st \fin} A \;\exists^{\st} S\; \forall^{\st} x\;  ( x \in S \eqi
x \in A \; \wedge \; \Phi (x) ).$$

An important consequence of \textbf{SF} is the ability to carry out external induction.

\begin{prop}\label{extind} \emph{ (External Induction)}
Let $\Phi(v)$ be an $\st$--$\in$--formula with arbitrary parameters. Then $\mathbf{SPOT} + \mathbf{SF} $ proves the following:
$$[\,\Phi(0) \,\wedge\, \forall^{\st} n \in \bbN\, (\Phi(n) \to \Phi(n+1)) \,]\to \forall^{\st} m \,\Phi(m).  $$
\end{prop}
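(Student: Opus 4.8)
The plan is to reduce external induction to ordinary ($\in$-formula) induction on $\bbN$ by using $\mathbf{SF}$ to "standardize away" the external predicate $\Phi$. First I would fix the parameters, assume the hypothesis $\Phi(0) \wedge \forall^{\st} n \in \bbN\,(\Phi(n) \to \Phi(n+1))$, and suppose for contradiction that $\Phi(m)$ fails for some standard $m$. The idea is to produce, for each standard $k \in \bbN$, a standard finite "initial segment" witness and apply $\mathbf{SF}$ to get a genuine standard set recording exactly where $\Phi$ holds among standard naturals below the relevant bound; then Transfer and ordinary induction give a contradiction.

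Concretely, I would argue as follows. Let $m_0$ be a standard natural with $\neg\Phi(m_0)$. Consider the standard finite set $A = \{0, 1, \ldots, m_0\}$ (this is standard by Transfer, being definable from the standard $m_0$). Apply $\mathbf{SF}$ to the $\st$-$\in$-formula $\Phi(v)$ and the standard finite set $A$: there is a standard set $S$ such that for all standard $x$,
$$ x \in S \eqi x \in A \,\wedge\, \Phi(x). $$
Now I examine $S$. We have $0 \in S$ since $0 \in A$ and $\Phi(0)$. Moreover, for every standard $n \in \bbN$ with $n < m_0$: if $n \in S$ then $\Phi(n)$ holds, hence by the induction step $\Phi(n+1)$ holds, and since $n+1 \le m_0$ we get $n+1 \in A$, so $n+1 \in S$. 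Thus $S$ contains $0$ and is "closed under successor below $m_0$" on standard inputs. The remaining task is to convert this into a statement about \emph{all} naturals so that ordinary induction applies. Since $S$ and $m_0$ are standard and "$\{0,\ldots,m_0\} \subseteq S$" is an $\in$-formula with standard parameters, I would verify it first for standard arguments and then invoke Transfer ($\T$): the $\in$-formula $\forall k \in \bbN\,(k \le m_0 \to k \in S)$ has standard parameters $S, m_0$; I check it holds for all standard $k \le m_0$ by ordinary (internal, $\in$-formula) induction on the standard finite set $A$ — the induction formula "$k \in S$" is an $\in$-formula — then Transfer is not even needed since all $k \le m_0$ with $m_0$ standard are themselves standard by Proposition~\ref{HKP1}. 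In particular $m_0 \in S$, so by the defining property of $S$ (with $x = m_0$ standard) we get $\Phi(m_0)$, contradicting the choice of $m_0$.

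The main obstacle — and the reason $\mathbf{SF}$ rather than plain induction is needed — is that $\Phi$ is an external ($\st$-$\in$) formula, so the naive induction "$\{m : \Phi(m)\}$ is inductive, hence equals $\bbN$" is illegitimate: Separation in $\ZF$ only applies to $\in$-formulas, so this class need not be a set, and external induction is genuinely not a theorem of $\SPOT$ alone. The role of $\mathbf{SF}$ is precisely to manufacture, from the external $\Phi$, an honest ($\in$-formula-definable, indeed standard) set $S$ that agrees with $\Phi$ on the standard elements of a standard finite set; once we have such an $S$, the induction is carried out internally on $S$ using an $\in$-formula, which is unproblematic. A secondary point to handle carefully is that the induction step hypothesis $\forall^{\st} n\,(\Phi(n)\to\Phi(n+1))$ is only assumed for \emph{standard} $n$, so the closure argument for $S$ must be run only over standard $n < m_0$; but since every $n \le m_0$ is standard (as $m_0$ is), this suffices to reach $m_0$ itself.
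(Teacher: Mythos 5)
Your proof is correct and follows essentially the same route as the paper: apply $\mathbf{SF}$ to the standard finite set $\{0,\ldots,m\}$ to obtain a standard set $S$ agreeing with $\Phi$ on its standard elements, then argue internally that $m \in S$ and hence $\Phi(m)$. The only inessential difference is the last internal step: the paper takes the greatest element of the finite standard set $S$ (standard by Transfer) and derives a contradiction with maximality, whereas you use Proposition~\ref{HKP1} to see that every $n < m$ is standard, so the successor-closure of $S$ holds below $m$ and ordinary $\in$-induction finishes the argument.
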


\begin{proof}
Let $m \in \bbN$ be standard. If $m = 0$, then $\Phi(m) $ holds.  Otherwise 
$\textbf{SF}$ yields a standard set $S \subseteq m$ such that $\forall^{\st} n < m\, (n \in S \eqi \Phi(n));$
clearly $0 \in S$. As $S$ is finite,  it has a greatest element $k$, which is standard by Transfer.
If $k < m$, then $k+1 \in S$, a contradiction. Hence $k = m$ and $\Phi(m)$ holds.
\end{proof}

\bigskip
$\SPOT^+$ is $\SPOT$ + \textbf{SN} + \textbf{SF}.

\emph{The theory }$\SPOT^+$ \emph{is a conservative extension of }$\ZF$.

\bigskip
This is proved for $\SPOT + \textbf{SN} $ in~\cite[Theorem B]{HK} (Propositions 4.15 and 6.7 there).
The  argument that \textbf{SF} can be also added conservatively over $\ZF$ is given at the end of Section~\ref{conservativity} (Proposition~\ref{sf}).

\bigskip
The theory $\SCOT$ is $\SPOT^+   + \DC$, where

\bigskip
  $\mathbf{DC}$ (Dependent Choice for $\st$--$\in$--formulas)
\emph{ Let $\Phi (u,v)$ be an $\st$--$\in$--formula with arbitrary parameters. 
If $\forall x \, \exists y \; \Phi(x,y)$, then for any $b$ there is a sequence $\la b_n \,\mid\, n \in \bbN\ra$ such that $b_0 = b $ and }
$\forall^{\st} n \in \bbN\; \Phi(b_n, b_{n+1})$.
 
\bigskip
Some general consequences of $\SCOT$ are (see~\cite{HK}):

$\CC\;$ (Countable $\st$--$\in$--Choice)
\emph{Let $\Phi (u,v)$ be an $\st$--$\in$--formula with arbitrary
parameters. Then} $\forall^{\st} n \in \bbN\; \exists x\; \Phi(n,x)
\to \exists f\, (f \text{ is a function} \,\wedge\, \forall^{\st} n
\in \bbN\, \Phi(n, f(n)).$

\bigskip
$\SC$ (Countable Standardization) 
\emph{Let $\Psi(v)$ be an
$\st$--$\in$--formula with arbitrary parameters. Then}
$$
\exists^{\st} S\; \forall^{\st} x \; (x \in S \eqi x \in \bbN
\,\wedge\, \Psi(x)).
$$
 
$\SCOT$ is a conservative extension of $\ZF + \ADC$ \cite[Theorem~5.10]{HK}.\footnote{
It is an open question whether $\SPOT^+ + \SC$ is a conservative extension of $\ZF$.} 
It allows such features as an
infinitesimal construction of the Lebesgue measure.  
It implies the axioms of Nelson's \emph{Radically Elementary Probability Theory}~\cite{N}.

%%%%%%%%%%%%%%%%%%%%%%%%%%%%%%%%%%%%%%%%%%%%%%%%%%%%%%%%%%%%%%%

\section{Ultrafilters, ultrapowers and iterated ultrapowers}\label{ultrapower}

In this section we review 
the construction of iterated ultrapowers in a form suitable for motivation and establishing the conservativity of the theories formulated in Section~\ref{spots}. 
We assume $\ZFC$, use classes freely and give no proofs. Some references for this material are Chang and Keisler~\cite{CK},  Enayat et al~\cite{E2} and Hrbacek~\cite{H4, H2b}.

Model theory deals with structures that are sets. For our purposes we need to construct ultrapowers of the entire set-theoretic universe $\bbV$. That means we have to deal with structures that are proper classes (eg $(\bbV, \in)$). We sometimes use the model-theoretic language and say that such structure \emph{satisfies a formula}, or that a mapping is an \emph{elementary embedding} of one class structure into another. It is well-known that the satisfaction relation $\vDash$ for such structures  cannot be proved to exist in $\ZFC$. But the concept makes sense for any one particular formula. Thus, if $\Ul =(\bbU; \bbV_1, \bbV_2,\ldots)$ is a (class) structure and $\Phi(v_1,\ldots,v_r)$ is a formula in the language of $\Ul$, we write
 $\Phi^\bbU(v_1,\ldots,v_r)$ for the formula obtained from $\Phi$ by restricting all quantifiers to $\bbU$,  ie, by replacing each occurrence of $\forall v$ by $\forall v \in \bbU$ and each occurrence of $\exists v$ by $\exists v \in \bbU$. [We usually abuse notation by not distinguishing between classes and their names in the language of $\Ul$.]
The statement that $\mathbf{J}$ is an elementary embedding of $\Ul^1$ to $\Ul^2$, for example,  means that, given any formula $\Phi$ of the appropriate language, $$\forall x_1,\ldots,x_r\in \bbU^1 \,(\Phi^{\bbU^1} (x_1,\ldots,x_r)\eqi  \Phi^{\bbU^2} (\mathbf{J}(x_1),\ldots,\mathbf{J}(x_r) )).$$

Let $U$ be an ultrafilter over $I$.
For $f, g \in \bbV^I$ we define
$$f =_U g \text{ iff } \{ i \in I \,\mid\, f(i) = g(i) \} \in U,$$
$$f \in_U g \text{  iff }\{ i \in I \,\mid\, f(i) \in g(i) \} \in U.$$

The usual procedure at this point is to form equivalence classes $[f]_U$ of functions  $f \in \bbV^I$ modulo $=_U$, using ``Scott's trick'' of taking only the functions of the minimal von Neumann rank to guarantee that the equivalence classes  are sets: 
$$ [f]_U = \{g \in \bbV^I\,\mid\, g =_U f \text{ and } \forall h\in \bbV^I\,
(h =_U f \to \rank h \ge \rank g)\};$$
see Jech~\cite[(9.3) and (28.15)]{J}.
Then
$\bbV^I/U =\{ [f]_U \,\mid\, f \in \bbV^I\}$, and 
$[f]_U \in_U [g]_U $ iff $f \in_U g$.

The \emph{ultrapower} of $\bbV$ by $U$ is the structure $(\bbV^I/U,  \in_U)$.

Let $\pi: I \to J$. 
Define the ultrafilter $V = \pi[U]$ over $J$ by
$$\pi[U] = \{ Y \subseteq J \,\mid\, \pi^{-1}[Y] \in U\}.$$

The mapping $\pi$ induces $\wpi: \bbV^J/V \to \bbV^I/U$ by $\wpi([g]_V) = [g\circ \pi]_U$.

The following proposition is an easy consequence of \L o\'{s}'s Theorem.

\begin{prop} \label{P1}
The mapping $\wpi $ is well-defined and it is an elementary embedding of $(\bbV^J\!/V , \in_V)$ into $(\bbV^I\!/U, \in_U) $.\\
In detail:  For any $\in$--formula $\phi$ and all $[f_1]_V,\ldots [f_r]_V \in \bbV^J/V$,
$$ \phi^{\bbV^J/V}([f_1]_V, \ldots, [f_r]_V ) \eqi  \phi^{\bbV^I/U}(\wpi([f_1]_V),\ldots, \wpi([f_r])_V ).$$
\end{prop}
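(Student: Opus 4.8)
The plan is to derive this as a routine application of \L o\'{s}'s Theorem for the ultrapower $(\bbV^J/V, \in_V)$ combined with a change-of-variables identity for the map $\wpi$. First I would dispose of well-definedness: if $[g]_V = [g']_V$, then $\{j \in J \mid g(j) = g'(j)\} \in V$, and by definition of $V = \pi[U]$ this means $\pi^{-1}[\{j \mid g(j)=g'(j)\}] = \{i \in I \mid g(\pi(i)) = g'(\pi(i))\} \in U$, i.e. $g \circ \pi =_U g' \circ \pi$, so $\wpi([g]_V)$ does not depend on the choice of representative. (One should note the minor nuisance that $\wpi([g]_V)$ was defined as $[g\circ\pi]_U$, which is the Scott-trick equivalence class of $g\circ\pi$; this is harmless since $=_U$-classes are what matter.)

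The core step is the following observation, proved by induction on the structure of the $\in$--formula $\phi$: for every $\in$--formula $\phi(v_1,\ldots,v_r)$ and all $g_1,\ldots,g_r \in \bbV^J$,
\[
\{j \in J \mid \phi^{\bbV}(g_1(j),\ldots,g_r(j))\} \in V
\quad\Longleftrightarrow\quad
\{i \in I \mid \phi^{\bbV}(g_1(\pi(i)),\ldots,g_r(\pi(i)))\} \in U.
\]
This is immediate from the definition of $V = \pi[U]$, since the second set is exactly $\pi^{-1}$ of the first. By \L o\'{s}'s Theorem applied to $(\bbV^J/V,\in_V)$, the left-hand set lying in $V$ is equivalent to $\phi^{\bbV^J/V}([g_1]_V,\ldots,[g_r]_V)$; by \L o\'{s}'s Theorem applied to $(\bbV^I/U,\in_U)$, the right-hand set lying in $U$ is equivalent to $\phi^{\bbV^I/U}([g_1\circ\pi]_U,\ldots,[g_r\circ\pi]_U)$. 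Since $[g_\ell \circ \pi]_U = \wpi([g_\ell]_V)$ by definition, chaining these equivalences gives exactly the displayed biconditional in the statement. I would set this up carefully as a fixed-formula argument, in keeping with the excerpt's caveat that the satisfaction relation for class structures is not available in $\ZFC$: the claim is a scheme, one instance per $\phi$, and for each fixed $\phi$ the induction on subformulas is finite and unproblematic.

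The only real subtlety — and where I expect to spend the most care — is that \L o\'{s}'s Theorem itself, in the $\Phi^{\bbV^I/U}$ form used here, relies on the Axiom of Choice in the existential-quantifier (induction) step: to witness $\exists v\, \psi$ one needs to choose, for each $i$ in the relevant set, an element $f(i)$ witnessing $\psi(f(i), g_1(i),\ldots)$. Since this whole section is explicitly conducted in $\ZFC$ (``We assume $\ZFC$, use classes freely''), this is legitimate here and requires no comment beyond citing \L o\'{s}. Thus the proof is genuinely short: well-definedness from the definition of $\pi[U]$, the $\pi^{-1}$ set-identity, and two invocations of \L o\'{s}'s Theorem glued by the definition $\wpi([g]_V) = [g\circ\pi]_U$. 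I would present it in roughly that order, stating the set-identity as the key lemma and then reading off both the well-definedness and the elementarity from it.
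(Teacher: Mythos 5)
Your proof is correct and is exactly what the paper intends: the paper gives no argument beyond noting that the proposition is ``an easy consequence of \L o\'{s}'s Theorem,'' and your realization --- well-definedness from the definition of $\pi[U]$, the preimage identity $\{i \in I \mid \phi^{\bbV}(g_1(\pi(i)),\ldots)\} = \pi^{-1}[\{j \in J \mid \phi^{\bbV}(g_1(j),\ldots)\}]$, and two invocations of \L o\'{s} glued by $\wpi([g]_V)=[g\circ\pi]_U$ --- is the standard way to carry that out. Your cautions about treating the claim as a scheme (one instance per fixed $\phi$) and about the use of $\AC$ in the existential step of \L o\'{s}'s Theorem are both apt and consistent with the paper's stated $\ZFC$ setting.
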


The \emph{tensor product} of ultrafilters $U$ and $V$, respectively over $I$ and $J$, is the ultrafilter over $I \times J$ defined by
(note the order; Chang and Keisler~\cite{CK} use the opposite order):
$$Z \in U \otimes V \text{ iff } \{x \in I \,\mid\, \{ y \in J \,\mid\, \la x,y \ra \in Z\} \in V \} \in U.$$

The $n$-\emph{th tensor power} of $U$ is the ultrafilter over $I^n$ defined by recursion:
$$\otimes^{0} U = \{\{\emptyset\}\}; \quad \otimes^{1} U = U; \quad \otimes^{n+1} U = U \otimes (\otimes^n U).$$

\bigskip
In the following, 
$\sfa$, $\sfc$ range over finite subsets of $\bbN$.

If $|\sfa| = n$, let $\pi$ be the mapping of $I^n$ onto $I^\sfa$ induced by the order-preserving mapping of $ n$ onto $\sfa$. It follows that $U_\sfa = \pi[\otimes^n U]$ is an ultrafilter over $I^\sfa$.

For $\sfa \subseteq \sfc$ let $\pi^\sfc_\sfa$ be the restriction map of $I^\sfc$ onto $I^\sfa$ defined by
$\pi^\sfc_\sfa (\mathbf{i}) =  \mathbf{i} \upharpoonright \sfa$ for $\mathbf{i} \in I^\sfc$.

It is easy to see that $U_\sfa = \pi^\sfc_\sfa[U_\sfc]$.
We let $\bbV_\sfa =\bbV^{I^\sfa}/U_\sfa$ and write $[f]_\sfa$ for $[f]_{U_{\sfa}}$ and $\in_\sfa$ for 
$\in_{U_\sfa}$.
The mapping $\wpi^\sfc_\sfa $ induced by $\pi^\sfc_\sfa$ is an elementary embedding of $(\bbV_\sfa, \in_\sfa) $ into $(\bbV_\sfc , \in_\sfc)$.

\begin{prop} \label{P2}
If   $f \in \bbV^{I^\sfa}$,  $g \in \bbV^{I^\sfc}$ and
$\wpi^{\sfa \cup \sfc}_\sfa([f]_{\sfa})  =   \wpi^{\sfa \cup \sfc}_\sfc([g]_{\sfc})  $, then 
there is $h \in \bbV^{I^{\sfa \cap \sfc}}$ such that  
 $\wpi^{\sfa \cup \sfc}_{\sfa \cap \sfc} ([h]_{\sfa \cap \sfc})  =
\wpi^{\sfa \cup \sfc}_{\sfa} ([f]_{\sfa})  =   \wpi^{\sfa \cup \sfc}_{\sfc} ([g]_{\sfc})  $.
\end{prop}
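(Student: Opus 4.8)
The plan is to work inside the ultrapower $(\bbV_{\sfa\cup\sfc},\in_{\sfa\cup\sfc})$ and exploit the combinatorial structure of the tensor-power ultrafilters, reducing the whole statement to a statement about the index sets $I^{\sfa}$, $I^{\sfc}$, $I^{\sfa\cap\sfc}$ and $I^{\sfa\cup\sfc}$. Write $\sfm=\sfa\cap\sfc$, $\sfn=\sfa\cup\sfc$, so that $\sfa\cup\sfc$ is partitioned as $(\sfa\setminus\sfm)\sqcup\sfm\sqcup(\sfc\setminus\sfm)$ and every $\mathbf{i}\in I^{\sfn}$ decomposes canonically as a triple $(\mathbf{i}\uhr(\sfa\setminus\sfm),\ \mathbf{i}\uhr\sfm,\ \mathbf{i}\uhr(\sfc\setminus\sfm))$. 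The hypothesis $\wpi^{\sfn}_\sfa([f]_\sfa)=\wpi^{\sfn}_\sfc([g]_\sfc)$ unwinds, by the definition of the induced maps, to $[f\circ\pi^{\sfn}_\sfa]_{\sfn}=[g\circ\pi^{\sfn}_\sfc]_{\sfn}$, i.e.\ to the assertion that the set
\[
Z=\{\mathbf{i}\in I^{\sfn}\,\mid\, f(\mathbf{i}\uhr\sfa)=g(\mathbf{i}\uhr\sfc)\}
\]
belongs to $U_{\sfn}$. My first step is to chase this set through the recursive definition of $U_{\sfn}=\pi[\otimes^{|\sfn|}U]$: membership in $U_{\sfn}$ means that for $U$-almost-all choices of the coordinates in $\sfa\setminus\sfm$, and then for $U$-almost-all choices of the coordinates in $\sfm$, and then for $U$-almost-all choices of the coordinates in $\sfc\setminus\sfm$, the equation $f=g$ holds (the exact nesting of the ``almost all'' quantifiers depends on the linear order of $\sfn$, but the Fubini-type commutativity of distinct-variable tensor powers lets me arrange the order of integration to put the $\sfm$-block in the middle).

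The key step is then a choice of the witness $h\in\bbV^{I^{\sfm}}$. For $\mathbf{j}\in I^{\sfm}$, I want to set $h(\mathbf{j})$ equal to the ``almost-everywhere constant'' common value of $\mathbf{i}\mapsto f(\mathbf{i}\uhr\sfa)$ over the fibre above $\mathbf{j}$ — more precisely, define $W_{\mathbf{j}}=\{\mathbf{i}\in I^{\sfn}\,:\,\mathbf{i}\uhr\sfm=\mathbf{j}\}$ and, for those $\mathbf{j}$ in a suitable $U_\sfm$-large set, let $h(\mathbf{j})$ be the value $v$ such that $\{\mathbf{i}\in W_{\mathbf{j}}\,:\,f(\mathbf{i}\uhr\sfa)=v\}$ is ``large'' in the product of the remaining two coordinate blocks; for the remaining $\mathbf{j}$ put $h(\mathbf{j})=\emptyset$. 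One must check such a $v$ exists and is unique for $U_\sfm$-almost all $\mathbf{j}$: existence and uniqueness both follow from the fact that $\{\mathbf i \uhr\sfa\}$ depends only on the $(\sfa\setminus\sfm)$-block once $\mathbf j$ is fixed, so $\mathbf i\mapsto f(\mathbf i\uhr\sfa)$ is constant on $U$-large sets in that block, and Los's theorem (Proposition~\ref{P1}) applied to the coordinate projections; here is where one uses that $f$ depends only on the $\sfa$-coordinates and $g$ only on the $\sfc$-coordinates, so on the fibre $W_{\mathbf j}$ both reduce to functions of disjoint coordinate blocks whose common $U_{\sfn}$-large agreement set forces each to be $U$-a.e.\ constant in its own block. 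With this $h$ in hand, one verifies directly that $\{\mathbf{i}\in I^{\sfn}\,:\,h(\mathbf{i}\uhr\sfm)=f(\mathbf{i}\uhr\sfa)\}\in U_{\sfn}$ and likewise with $g$, which is exactly $\wpi^{\sfn}_{\sfm}([h]_{\sfm})=\wpi^{\sfn}_\sfa([f]_\sfa)=\wpi^{\sfn}_\sfc([g]_\sfc)$.

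The main obstacle I anticipate is bookkeeping with the linear orders: the maps $\pi$ defining $U_\sfa$, $U_\sfc$, $U_{\sfm}$, $U_{\sfn}$ are all induced by the order-isomorphisms of initial segments of $\bbN$ onto the respective finite sets, and the tensor powers are built in a fixed order, so I must be careful that ``restrict to $\sfa$'' commutes appropriately with ``integrate out the $\sfc\setminus\sfa$ block.'' The clean way to handle this is to prove once and for all a Fubini/associativity lemma: if $\sfn$ is split into consecutive blocks $P_1,\dots,P_k$ (in the order inherited from $\bbN$), then $U_{\sfn}$ is the iterated tensor product $U_{P_1}\otimes\cdots\otimes U_{P_k}$ under the canonical bijection $I^{\sfn}\cong I^{P_1}\times\cdots\times I^{P_k}$, and moreover distinct-coordinate tensor products commute, so I may reorder the blocks $\sfa\setminus\sfm$, $\sfm$, $\sfc\setminus\sfm$ freely. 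Granting that lemma (which is standard and follows from the definitions together with \L o\'s's theorem), the argument above goes through; without it the indices become unwieldy. Everything else is a routine application of \L o\'s's theorem and the definition of the induced embeddings.
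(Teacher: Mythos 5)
Your strategy founders on the lemma you grant yourself at the end. The associativity half is fine (splitting $U_{\sfa\cup\sfc}$ at cuts into consecutive blocks), but the commutativity half is false: tensor products of nonprincipal ultrafilters do not commute. With the paper's convention, $\{\la x,y\ra \mid x<y\}\in U\otimes U$ while its coordinate swap $\{\la x,y\ra \mid y<x\}$ is not, so the iterated ``for $U$-almost all'' quantifiers must be taken in the increasing order of the indices of $\sfa\cup\sfc$ and cannot be permuted. Moreover, the three blocks $\sfa\setminus\sfm$, $\sfm=\sfa\cap\sfc$, $\sfc\setminus\sfm$ are in general interleaved (take $\sfa=\{0,2,4\}$, $\sfc=\{1,2,3\}$), so they are not even consecutive blocks of $\sfa\cup\sfc$. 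Your construction of $h$ needs exactly the forbidden direction of Fubini: to say ``for $U_{\sfa\cap\sfc}$-almost all $\mathbf{j}$ the fibre $W_{\mathbf{j}}$ behaves thus'' you must move the $\sfa\cap\sfc$-coordinates outermost, which the hypothesis $Z\in U_{\sfa\cup\sfc}$ does not give you whenever some coordinate of $(\sfa\cup\sfc)\setminus\sfm$ precedes some coordinate of $\sfm$. This asymmetry is not a bookkeeping nuisance to be waved away; it is the same phenomenon that makes $\bbN\cap\bbS_1$ an end extension of $\bbN\cap\bbS_0$ in Proposition 4.4, so any proof that ``reorders blocks freely'' proves too much. (Your argument is fine in the special case where every element of $\sfa\cap\sfc$ precedes every element of the symmetric difference, but not in general.)

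The proposition is nevertheless true, and a correct proof avoids reordering by induction on $|\sfa\cup\sfc|$, peeling off the smallest (outermost) index $t$ and writing $U_{\sfa\cup\sfc}\cong U\otimes U_{\sfn'}$ with $\sfn'=(\sfa\cup\sfc)\setminus\{t\}$. If $t\in\sfa\setminus\sfc$, then for $U$-almost all $x$ the section $f_x$ and $g$ satisfy the hypothesis over $\sfa\setminus\{t\}$ and $\sfc$, whose intersection is still $\sfa\cap\sfc$; fix one such $x$, obtain $h$ from the induction hypothesis with $\wpi^{\sfn'}_{\sfa\cap\sfc}([h]_{\sfa\cap\sfc})=\wpi^{\sfn'}_{\sfc}([g]_{\sfc})$, and push forward by $\wpi^{\sfa\cup\sfc}_{\sfn'}$, using the hypothesis to get equality with $\wpi^{\sfa\cup\sfc}_{\sfa}([f]_{\sfa})$; the case $t\in\sfc\setminus\sfa$ is symmetric. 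If $t\in\sfa\cap\sfc$, apply the induction hypothesis to the pairs of sections $f_x,g_x$ for $U$-almost all $x$ and assemble the resulting $h_x$ into a single $h$ on $I^{\sfa\cap\sfc}$ (a choice function is needed, which is harmless since Section 3 works in $\ZFC$). Two further remarks: the statement you are proving is Proposition 3.2, not the Factoring Lemma 3.3; and the paper itself offers no proof of it (Section 3 explicitly states its facts without proof, deferring to Chang--Keisler, Enayat et al.\ and Hrbacek), so your write-up cannot be compared to an in-paper argument, only judged on its own terms, where the gap above is fatal as it stands.
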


Let $f, g \in \bigcup_{\sfa} \bbV^{I^\sfa} $; say  $f \in \bbV^{I^\sfa}$ and  $g \in \bbV^{I^\sfc}$;  we define 
$$f =_\infty g \text{ iff } f \circ \, \pi_{\sfa}^{\sfa \cup \sfc} =_{U_{\sfa \cup \sfc}}  g \circ \, \pi_{\sfc}^{\sfa \cup \sfc} ,$$
$$f \in_\infty g \text{ iff } f \circ \, \pi_{\sfa}^{\sfa \cup \sfc} \in_{U_{\sfa \cup \sfc}}  g \circ \, \pi_{\sfc}^{\sfa \cup \sfc} .$$
We let $[f]_\infty $ be the equivalence class of $f$ modulo $=_\infty$ (again using Scott's Trick)
and let $\bbV_\infty = \{ [f]_\infty \,\mid\, f \in \bigcup_{\sfa} \bbV^{I^\sfa} \}$ and 
$[f]_\infty \in_\infty [g]_\infty $ iff $ f \in_\infty g$.

The \emph{iterated ultrapower of $\bbV$ by} $U$ is the structure $(\bbV_\infty, \in_\infty)$.
It is the direct limit  of the system of structures
$(\bbV_\sfa,\; \wpi^\sfc_\sfa;\; \sfa, \sfc \in \cP^{\fin}(\bbN), \,\sfa \subseteq \sfc)$.
The mappings $\wpi_{\sfa}^{\infty} : \bbV_\sfa \to \bbV_\infty$ defined by 
$\wpi_{\sfa}^{\infty} ([f]_\sfa) = [f]_\infty$ are elementary embeddings; we identify $\bbV_\sfa$ with its image by this embedding.
In particular, $\wpi_{\emptyset}^{\infty} $ embeds $\bbV :\cong \bbV_{\emptyset}$ elementarily into $\bbV_\infty$.

In addition to the canonical elementary embeddings $\wpi_{\sfa}^{\sfc}$ for $\sfa \subseteq \sfc$, the iterated ultrapower allows other elementary embeddings, due to the fact that the same ultrafilter $U$ is used throughout the construction.
If $|\sfa| = |\sfc|$ and $\al$ is the order-preserving mapping of  $\sfa$ onto $\sfc$, define $\pi_\sfa^\sfc: I^\sfc \to I^\sfa$ by $\pi_\sfa^\sfc(\mathbf{i})= \mathbf{i} \circ \al$ for $\mathbf{i} \in I^\sfc$.
Then $\wpi^\sfc_\sfa $ is an isomorphism of $(\bbV_\sfa, \in_\sfa)$ and $(\bbV_\sfc, \in_\sfc)$.

We fix $r \in \bbN$.
For $f \in \bbV^{I^{r+n}}$ and $\mathbf{i} \in I^r$ we define  $f_{\mathbf{i}}  \in \bbV^{I^n}$ by
$f_{\mathbf{i}}  (\mathbf{j})= f(\mathbf{i}, \mathbf{j})$ for all $\mathbf{j} \in I^n$ and let
$\Omega_n\, ([f]_{r+n}) = [F]_{r}$ where $F(\mathbf{i} )= [f_{\mathbf{i}} ]_{n}$ for all $\mathbf{i} \in I^r$. 
It is routine to check that 
$\Omega_n: \bbV_{r+n} \to  (\bbV_n)^{I^r} /U_r$ 
is well-defined 
 and it
 is an isomorphism of the structures $(\bbV_{r+n}, \in_{r+n}) $ and $(\bbV_n , \in_n)^{I^r}/U_r$.
\

We use the notations $r \oplus \sfa = \{ r + s \,\mid\, s \in \sfa\}$ and 
$r \boxplus \sfa = r \,\cup\, (r \oplus \sfa$).  Note that if $\sfa = n = \{0,\ldots, n-1\} \in \bbN$, then $r \boxplus n = r + n$.

\begin{prop} (Factoring Lemma) \label{P3}
The mapping $\Omega_n$ is an isomorphism of the structures
$$ (\bbV_n,\in_n ,   \bbV_\sfa,  \,\wpi_\sfa^\sfc; \,\sfa,\sfc \subset n, |\sfa| = |\sfc|)^{I^r}/U_r$$ 
and 
 $$ (\bbV_{r+n},\,\in_{r+n} , \,  \bbV_{r \boxplus\sfa}, \,\wpi_{r \boxplus\sfa}^{r \boxplus\sfc}; \;\sfa,\sfc \subset n,  |\sfa| = |\sfc|).$$ 
\end{prop}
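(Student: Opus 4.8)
The plan is to verify that the isomorphism $\Omega_n$ already constructed in the preceding paragraph — which identifies $(\bbV_{r+n},\in_{r+n})$ with $(\bbV_n,\in_n)^{I^r}/U_r$ — in fact respects the extra structure, namely the intermediate ultrapowers $\bbV_\sfa$ for $\sfa \subset n$ and the ``shift'' isomorphisms $\wpi_\sfa^\sfc$ between them. Since $\Omega_n$ is already known to be an isomorphism of the $\in$-reducts, what remains is a bookkeeping verification on the additional relation and function symbols. Concretely, I would fix $\sfa \subset n$ and show that $\Omega_n$ carries the copy of $\bbV_{r\boxplus\sfa}$ sitting inside $\bbV_{r+n}$ onto the ``internal'' copy of $\bbV_\sfa$ sitting inside $(\bbV_n)^{I^r}/U_r$ (i.e. the substructure computed coordinatewise over $I^r$), and similarly that it intertwines $\wpi^{r\boxplus\sfc}_{r\boxplus\sfa}$ on the left with the coordinatewise application of $\wpi^\sfc_\sfa$ on the right.

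The key steps, in order. First, unwind the definitions: an element of $\bbV_{r\boxplus\sfa}$, viewed inside $\bbV_{r+n}$ via $\wpi^{r+n}_{r\boxplus\sfa}$, is represented by a function $g\in\bbV^{I^{r\boxplus\sfa}}$ composed with the restriction map $\pi^{r+n}_{r\boxplus\sfa}$; call the resulting element $[g\circ\pi^{r+n}_{r\boxplus\sfa}]_{r+n}$. Second, apply $\Omega_n$ to this element: by definition $\Omega_n([f]_{r+n})=[F]_r$ with $F(\mathbf{i})=[f_{\mathbf{i}}]_n$, so I must compute $(g\circ\pi^{r+n}_{r\boxplus\sfa})_{\mathbf{i}}$ for $\mathbf{i}\in I^r$. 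The point is that $r\boxplus\sfa = r\cup(r\oplus\sfa)$, so restricting to the first $r$ coordinates and to $r\oplus\sfa$ separates cleanly: $(g\circ\pi^{r+n}_{r\boxplus\sfa})_{\mathbf{i}}$ depends on $\mathbf{j}\in I^n$ only through $\mathbf{j}\uhr\sfa$, hence is of the form $h_{\mathbf{i}}\circ\pi^n_\sfa$ where $h_{\mathbf{i}}\in\bbV^{I^\sfa}$ is obtained from $g$ by fixing the first $r$ arguments to $\mathbf{i}$. This exhibits $[F]_r$ as lying in the internal copy of $\bbV_\sfa$ over $I^r$, and a symmetric argument (using that $\Omega_n$ is onto) gives the reverse inclusion. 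Third, for the shift maps: given $\sfa,\sfc\subset n$ with $|\sfa|=|\sfc|$ and $\al$ the order-isomorphism $\sfa\to\sfc$, I would check that the induced order-isomorphism $r\boxplus\sfa\to r\boxplus\sfc$ is the identity on the first $r$ coordinates and $\al$ on the rest; then tracing a representative $g$ through $\wpi^{r\boxplus\sfc}_{r\boxplus\sfa}$ and through $\Omega_n$, and comparing with first applying $\Omega_n$ and then $\wpi^\sfc_\sfa$ in each $I^r$-coordinate, one sees both routes produce the same function on $I^r$ up to a $U_r$-null set, by Łoś applied coordinatewise. Finally, I would invoke Proposition~\ref{P1} (Łoś) and Proposition~\ref{P2} once more to confirm the coordinatewise operations on the right-hand side are exactly the operations of the quotient structure $(\bbV_n,\ldots)^{I^r}/U_r$, so that the verified bijection is a genuine isomorphism of the full-signature structures.

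The main obstacle I anticipate is purely combinatorial: keeping the indexing of coordinates straight when passing between $I^{r+n}$, $I^r\times I^n$, $I^{r\boxplus\sfa}$, and $I^\sfa$ simultaneously, and checking that the restriction maps $\pi^\sfc_\sfa$, the permutation-induced maps $\pi^\sfc_\sfa$ for $|\sfa|=|\sfc|$ (note the overloaded notation), and the splitting $I^{r+n}\cong I^r\times I^n$ all commute in the required way. There is no deep idea here beyond the one already used to build $\Omega_n$; the content is that the direct-limit/internal-ultrapower factorization is natural in the finite index set $\sfa$. I would organize the argument so that a single lemma — ``for $\sfa\subseteq n$, $\Omega_n\circ\wpi^{r+n}_{r\boxplus\sfa}$ equals the coordinatewise $\wpi^{\infty}_\sfa$-free inclusion of $\bbV_\sfa$ into $(\bbV_n)^{I^r}/U_r$'' — is proved once, and the statement about $\wpi^\sfc_\sfa$ follows by applying it twice and using functoriality.
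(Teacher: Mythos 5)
Your proposal is correct, and it is worth noting that the paper itself offers no proof to compare against: Section~3 is an expository review carried out in $\ZFC$ (``we \dots give no proofs''), with the iterated-ultrapower facts referred to Chang--Keisler, Enayat et al.\ and Hrbacek. Your route -- take the isomorphism $\Omega_n$ of the $\in$-reducts already in hand and verify that it respects the added predicates and shift maps -- is exactly the standard verification, and the coordinate computation you outline does go through: writing a point of $I^{r+n}$ as $(\mathbf{i},\mathbf{j})$ with $\mathbf{i}\in I^r$, $\mathbf{j}\in I^n$, one gets $(g\circ\pi^{r+n}_{r\boxplus\sfa})_{\mathbf{i}}=g_{\mathbf{i}}\circ\pi^n_\sfa$, so $\Omega_n$ carries $\wpi^{r+n}_{r\boxplus\sfa}([g]_{r\boxplus\sfa})$ to the element represented by $\mathbf{i}\mapsto\wpi^n_\sfa([g_{\mathbf{i}}]_\sfa)$; and since the order isomorphism of $r\boxplus\sfa$ onto $r\boxplus\sfc$ is the identity below $r$ and the shift of $\al$ above $r$, the same computation yields the intertwining of $\wpi^{r\boxplus\sfc}_{r\boxplus\sfa}$ with the coordinatewise $\wpi^\sfc_\sfa$ on the nose, not merely on a $U_r$-large set. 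Two small points should be made explicit in a full write-up. First, by \L o\'{s} the predicate $\bbV_\sfa$ is interpreted in $(\bbV_n,\ldots)^{I^r}/U_r$ as almost-everywhere membership in $\wpi^n_\sfa[\bbV_\sfa]$, so the ``onto'' half of your first step requires choosing, for $U_r$-almost all $\mathbf{i}$, a representative $h_{\mathbf{i}}\in\bbV^{I^\sfa}$ with $F(\mathbf{i})=\wpi^n_\sfa([h_{\mathbf{i}}]_\sfa)$ and reassembling $g(\mathbf{i},\cdot)=h_{\mathbf{i}}$; this uses the Axiom of Choice, which is harmless here since Section~3 assumes $\ZFC$, but it is not a ``symmetric argument'' so much as a separate selection step. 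Second, one should fix the convention (as the paper does for $\wpi^\infty_\sfa$) that $\bbV_\sfa$ and $\bbV_{r\boxplus\sfa}$ are identified with their images under $\wpi^n_\sfa$ and $\wpi^{r+n}_{r\boxplus\sfa}$ respectively, so that the displayed structures are genuinely expansions of $(\bbV_n,\in_n)$ and $(\bbV_{r+n},\in_{r+n})$; with that convention your single lemma plus functoriality does the job.
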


%%%%%%%%%%%%%%%%%%%%%%%%%%%%%%%%%%%%%%%%%%%%%%%%%%%%%%%%%%%%%%%%

\section{\textbf{SPOTS}} \label{spots}

Theories with many levels of standardness have been developed in 
P\'{e}raire,~\cite{Per} (\textbf{RIST})
and
Hrbacek~\cite{H4, H2b} ($\GRIST$).
The characteristic feature of these theories is that the unary standardness predicate $\st(v)$ is subsumed under a binary \emph{relative standardness} predicate $\sr(u,v)$.

The main advantage of  theories with many levels of standardness is that nonstandard methods can be applied to arbitrary objects, not just the standard ones.
For example, the nonstandard definition of the derivative
$$ f'(a)  = \sha \left( \frac{f(a + h) - f(a)}{h}\right)  \; \text{ where } h \text{ is infinitesimal},$$
which in a single-level nonstandard analysis works for standard $f$ and $a$ only, in these theories works for all $f$ and $a$, provided ``infinitesimal'' is understood as ``infinitesimal relative to the level of $f$ and $a$''
and ``$\sha$'' is  ``$\sha$ relative to the level of $f$ and $a$.''
In the book  Hrbacek, Lessmann and O'Donovan~\cite{HLOD}
this approach is used to develop elementary calculus.

Jin's work using multi-level nonstandard analysis goes beyond the features postulated by these theories in that it also employs  nontrivial elementary embeddings (ie, other than those provided by inclusion of one level in a higher level).

\bigskip
The language of $\textbf{SPOTS}$ has  a binary predicate symbol $\in$, a binary predicate symbol $\sr$\;  ($\sr(u,v)$ reads ``$v$ is $u$-\emph{standard}'') and a ternary function symbol $\bs$ that captures the relevant isomorphisms. 
The unary predicate $\st(v) $ stands for $\sr(\emptyset, v)$, 
Variables $\sfa$, $\sfc$ (with decorations)  range over \emph{standard finite} subsets of $\bbN$; we refer to them as \emph{labels}.
We use the class notation
$\bbS_\sfa = \{ x \,\mid\, \sr(\sfa,x)\}$ and $\bbI_{\sfa}^{\sfc} = \{ \la x, y \ra  \,\mid\, \bs(\sfa,\sfc,x) = y\}$.
If $\sfa$ is a standard natural number, we use $\sfn$  instead of $\sfa$; analogously for $\sfc$ and $\sfm$.
We call $\bbS_\sfn$ the $\sfn$-\emph{th level of standardness}.
In particular, $\bbS = \bbS_0 =\{ x \,\mid\, \st(x)\}$ is the universe of standard sets.

As in Section~\ref{ultrapower}, for standard $r \in \bbN$ we let $r \oplus \sfa = \{r+s \,\mid\, s \in \sfa\}$ and $r \boxplus \sfa = r \,\cup\, (r \oplus \sfa)$.
Also $\sfa < \sfc$ stands for $\forall s \in \sfa\;\forall t \in \sfc\;(s<t)$.

A formula $\Phi$ is \emph{admissible} if labels appear in it only as subscripts and superscripts of $\bbS$ and $\bbI$, and all quantifiers are of the form $\forall v \in \bbS_\sfa$ and $\exists v \in \bbS_\sfc$.

\begin{defn} (\emph{Admissible formulas})
\begin{itemize}
\item
$u = v$, $u \in v$, $v \in \bbS_\sfa$ and $\bbI_\sfa^\sfc (u) = v$ are admissible formulas
\item
If $\Phi$ and $\Psi$ are admissible, then $\neg \Phi$, $\Phi \wedge \Psi$, $\Phi \vee \Psi$, $\Phi \to \Psi$ and $\Phi \eqi \,\Psi$ are admissible
\item
If $\Phi$ is admissible, then $\forall v \in \bbS_\sfa \,\Phi$ and $\exists v \in \bbS_\sfc \,\Phi$ are admissible
\end{itemize}
\end{defn}

Let $r$ be a variable that ranges over standard natural numbers and  does not occur in the formula $\Phi$.
 The formula $\Phi^{\uparrow r}$ is obtained from  $\Phi$ by replacing each occurrence of  $\bbS_\sfa$ with $\bbS_{r\boxplus \sfa}$
and each occurrence of $\bbI_\sfa^{\sfc}$ with $\bbI_{r \boxplus\sfa }^{\,r\boxplus\sfc}$.
In particular, if $\Phi$ is a formula where only the symbols $\bbS_\sfn$  for  $\sfn  \in \bbN \cap \bbS_0$ occur, then 
 $\Phi^{\uparrow r}$ is obtained from $\Phi$ by shifting all levels by $r$.

\bigskip
The iterated ultrapower construction described in Section~\ref{ultrapower} suggests the axioms
\textbf{IS}, \textbf{GT} and \textbf{HO}.

\bigskip
\textbf{IS}  (Structural axioms)
\begin{enumerate}
\item
$\sr(u,v) \to \exists \sfa \; (u = \sfa )$, \; $\bs (u,v,x) = y \to \exists \sfa, \sfc \; (u = \sfa \wedge v = \sfc)$, 

$\bbI_\sfa^\sfc(u) = v  \to |\sfa| = |\sfc|$
\item
$\forall x \, \exists \sfa\; (x \in \bbS_\sfa)$
\item
For all $\sfa, \sfc$,  \quad $\bbS_{\sfa \,\cap\, \sfc} = \bbS_{\sfa } \,\cap\, \bbS_{\sfc}$ \; (in particular, $\sfa \subseteq \sfc\to \bbS_\sfa \subseteq \bbS_{\sfc}$)

\item
If  $|\sfa| =  |\sfa' |= |\sfa''|$, then
$$\bbI_\sfa^{\sfa'} : \bbS_\sfa \to \bbS_{\sfa'}, \quad\bbI_\sfa^\sfa = \text{Id}_{\bbS_\sfa}, \quad \bbI_{\sfa'}^\sfa = (\bbI_\sfa^{\sfa'})^{-1}, \quad  \bbI_{\sfa}^{\sfa'} \circ \bbI_{\sfa'}^{\sfa''} =\bbI_\sfa^{\sfa''} $$ 
$$\forall x,z \in \bbS_\sfa \;  ( x \in z \eqi \bbI_\sfa^{\sfa'} (x) \in \bbI_\sfa^{\sfa'} (z) )$$

\item
If $|\sfa| = |\sfa'|$ and $\sfc \subset \sfa$, then
$$ x \in \bbS_\sfc \to      \bbI_\sfa^{\sfa'} (x) =  \bbI_\sfc^{\sfc'} (x),    $$
where  $\sfc'$ is the image of $\sfc$ by the order-preserving map of $\sfa$ onto~$\sfa'$

\end{enumerate}

\bigskip
\textbf{GT} (Generalized Transfer)

Let $\phi(v, v_1,\ldots, v_k)$ be an $\in$--formula. Then for all $\sfa \in \cP^{\fin}(\bbN)\cap \bbS_0$
$$\forall x_1,\ldots, x_k \in \bbS_\sfa\;\left( \forall x \in \bbS_\sfa\; \phi(x, x_1,\ldots, x_k)  \to  \forall x\; \phi(x, x_1,\ldots, x_k)\right).$$

\bigskip
\textbf{HO} (Homogeneous Shift)

Let $\Phi(v_1,\ldots, v_k)$ be an admissible formula. 
For all standard~$r$ and all  $\sfa \in \cP^{\fin}(\bbN)\cap \bbS_0$
$$\forall x_1,\ldots, x_k \in \bbS_\sfa\;[\,\Phi( x_1,\ldots, x_k )  \eqi \Phi^{\uparrow r}(\bbI_\sfa^{r\oplus\sfa}( x_1),\ldots, \bbI_\sfa^{r\oplus\sfa}(x_k) )\,].$$

\bigskip
The language of $\SPOTS$ has an obvious interpretation in the iterated ultrapower described in Section~\ref{ultrapower}:
$\bbS_\sfa$ is interpreted as $\bbV^{I^\sfa}/U_\sfa$ and $\bbI^\sfc_\sfa$ is interpreted as $\wpi^\sfc_\sfa$.

\begin{prop} \label{PP2}
Under the above interpretation, the axioms $\mathbf{IS}$, $\mathbf{GT}$ and $\mathbf{HO}$  hold in the iterated ultrapower constructed in Section~\ref{ultrapower}.
\end{prop}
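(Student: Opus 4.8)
The plan is to verify each of the three axiom groups separately under the interpretation $\bbS_\sfa \mapsto \bbV^{I^\sfa}/U_\sfa = \bbV_\sfa$ and $\bbI^\sfc_\sfa \mapsto \wpi^\sfc_\sfa$, drawing on the facts collected in Section~\ref{ultrapower}. First I would dispose of \textbf{IS}. Clauses (1)–(5) are almost all direct translations of structural facts already recorded: clause (1) is immediate from the fact that $\wpi^\sfc_\sfa$ is only defined when $|\sfa|=|\sfc|$ (for the order-preserving/bijective version) and that every $[f]_\sfa$ lives over some $I^\sfa$; clause (2) is the statement $\bbV_\infty = \bigcup_\sfa \bbV_\sfa$, i.e. every element of the iterated ultrapower is represented by some $f \in \bbV^{I^\sfa}$; clause (3) is the compatibility $\bbV_\sfa \cap \bbV_\sfc = \bbV_{\sfa\cap\sfc}$ inside $\bbV_\infty$, which follows from Proposition~\ref{P2} together with the fact that the $\wpi^\infty_\sfa$ are elementary embeddings identified with inclusions; clause (4) repeats that $\wpi^{\sfc}_\sfa$ (bijective case) is an isomorphism of $(\bbV_\sfa,\in_\sfa)$ onto $(\bbV_\sfc,\in_\sfc)$ with the stated functorial identities, which is exactly the isomorphism property noted after Proposition~\ref{P1} and the composition law for induced maps $\wpi$; clause (5) is the coherence statement relating the action of $\wpi^{\sfa'}_\sfa$ on elements already coming from a lower label $\sfc\subset\sfa$ to $\wpi^{\sfc'}_\sfc$, which follows by chasing the definitions of the induced maps through the order-preserving bijections and using that all these maps are induced by the single ultrafilter $U$.

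Next I would handle \textbf{GT}. Fix an $\in$-formula $\phi$, a label $\sfa$, and parameters $x_1,\dots,x_k \in \bbS_\sfa$, i.e. elements of $\bbV_\sfa = \bbV^{I^\sfa}/U_\sfa$. The point is that $\bbV\cong\bbV_\emptyset$ embeds elementarily into $\bbV_\sfa$ via $\wpi^\sfa_\emptyset$ (composition of the maps in the direct system), and more relevantly that $\bbV_\sfa$ embeds elementarily into $\bbV_\infty$ via $\wpi^\infty_\sfa$; the interpretation of $\bbS_\sfa$ as a subclass of the ambient universe $\bbV_\infty$ is precisely the image of this embedding. So ``$\forall x\in\bbS_\sfa\;\phi(x,\bar x)$'' in $\bbV_\infty$ says $\phi^{\bbV_\sfa}(\bar x)$ holds with the quantifier relativized, and ``$\forall x\;\phi(x,\bar x)$'' says $\phi^{\bbV_\infty}(\bar x)$ holds; since $\wpi^\infty_\sfa$ is elementary, the two are equivalent — in particular the first implies the second. (One could alternatively run this directly from \L o\'s's theorem, but invoking the elementarity of the direct-limit embeddings already established is cleaner.)

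The main work, and the step I expect to be the real obstacle, is \textbf{HO}. Here one must show that for an admissible $\Phi$, a standard $r$, a label $\sfa$, and parameters $x_1,\dots,x_k\in\bbS_\sfa$, the equivalence
\[
\Phi(x_1,\dots,x_k)\;\eqi\;\Phi^{\uparrow r}\big(\bbI_\sfa^{r\oplus\sfa}(x_1),\dots,\bbI_\sfa^{r\oplus\sfa}(x_k)\big)
\]
holds in $\bbV_\infty$. The plan is to exploit the Factoring Lemma (Proposition~\ref{P3}) and, more basically, the homogeneity built into the iterated ultrapower by using the same $U$ at every coordinate: the shift $r\oplus(-)$ on labels corresponds, via the isomorphisms $\Omega_n$ and the canonical identifications $I^{r+n}\cong I^r\times I^n$, to moving the ``active'' coordinates of a representing function from the first block to a block shifted by $r$, and the map $\bbI_\sfa^{r\oplus\sfa}=\wpi^{r\oplus\sfa}_\sfa$ realizes exactly this re-indexing on representatives. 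I would proceed by induction on the structure of the admissible formula $\Phi$: atomic formulas $u=v$, $u\in v$, $v\in\bbS_\sfc$, and $\bbI_\sfc^\fd(u)=v$ are handled using clauses (3)–(5) of \textbf{IS} and the compatibility of the $\wpi$'s with $\in_\sfa$ under the shift; the propositional connectives are trivial; and for the quantifier step $\forall v\in\bbS_\sfc\,\Psi$ one uses that $\bbI_\sfa^{r\oplus\sfa}$ restricts to a bijection $\bbS_\sfc\to\bbS_{r\oplus\sfc}$ (clause (5) plus the isomorphism properties of clause (4)) so that quantification over $\bbS_\sfc$ on one side matches quantification over $\bbS_{r\oplus\sfc}=\bbS_{(r\boxplus\sfc)\setminus r}$, i.e. over $\bbS_{r\boxplus\sfc}$'s appropriate sub-level, on the other. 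The delicate bookkeeping is that $\Phi^{\uparrow r}$ replaces $\bbS_\sfc$ by $\bbS_{r\boxplus\sfc}$ rather than $\bbS_{r\oplus\sfc}$, and one must check — again from \textbf{IS}(5) and the fact that the parameters $\bbI_\sfa^{r\oplus\sfa}(x_i)$ already lie in the lower level $\bbS_{r\oplus\sfa}\subseteq\bbS_{r\boxplus\sfa}$ — that adjoining the bottom block $r$ to every label does not change the truth value, because the embedding $\wpi^{r\boxplus\sfc}_{r\oplus\sfc}$ is elementary and fixes everything coming from $\bbS_{r\oplus\sfc}$. Carrying this induction through cleanly, with the label arithmetic $r\oplus$, $r\boxplus$ kept straight, is the technical heart of the proposition; everything else is bookkeeping on top of Section~\ref{ultrapower}.
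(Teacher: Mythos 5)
Your handling of \textbf{IS} and \textbf{GT} is correct and is essentially the paper's own argument: clause (3) of \textbf{IS} comes from Proposition~\ref{P2} and the rest is routine, and \textbf{GT} follows by using the elementarity of $\wpi_\sfa^\infty$ (for $\in$--formulas) twice. The problem is your proof of \textbf{HO}.

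In the quantifier step of your structural induction you correctly reduce $\forall v\in\bbS_\sfc\,\Psi$ on the left to quantification over $\bbS_{r\oplus\sfc}$ on the right (via the bijection $\bbI_\sfc^{r\oplus\sfc}$ and \textbf{IS}(5)), and you are then left with widening the quantifier from $\bbS_{r\oplus\sfc}$ to $\bbS_{r\boxplus\sfc}$, i.e.\ adjoining the bottom block $r$. You justify this by saying that $\wpi_{r\oplus\sfc}^{r\boxplus\sfc}$ ``is elementary and fixes everything coming from $\bbS_{r\oplus\sfc}$.'' But the only elementarity established in Section~\ref{ultrapower} (Proposition~\ref{P1} and its consequences) is elementarity for $\in$--formulas, whereas $\Psi^{\uparrow r}$ is an admissible formula containing the level predicates $\bbS$ and the maps $\bbI$. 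The statement that the sub-levels indexed by the labels $r\oplus\sfc$ sit elementarily, \emph{with respect to admissible formulas}, inside the levels indexed by $r\boxplus\sfc$ is precisely the nontrivial content of \textbf{HO}; invoking it at the quantifier step makes the induction circular exactly where the work has to be done. No combination of \textbf{IS}(4)--(5) and $\in$--elementarity yields the forward direction, which transfers a universal statement from the smaller class to the strictly larger one.

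The paper closes this gap semantically, using the two ingredients you mention in your preamble but never actually deploy. Choose $n$ so that all labels occurring in $\Phi$ are proper subsets of $n$; since admissible formulas have all quantifiers bounded to levels, the truth value of $\Phi(x_1,\dots,x_k)$ in $\bbV_\infty$ depends only on the expanded structure $(\bbV_n,\in_n,\bbV_\sfa,\wpi_\sfa^\sfc;\ \sfa,\sfc\subset n)$, and likewise $\Phi^{\uparrow r}$ depends only on the corresponding structure with labels $r\boxplus\sfa$. Now apply \L o\'{s}'s Theorem to the ultrapower of this \emph{expanded} structure by $U_r$: the diagonal embedding is elementary for formulas of the expanded language, i.e.\ for admissible formulas. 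Finally, the Factoring Lemma (Proposition~\ref{P3}) identifies that ultrapower with $(\bbV_{r+n},\in_{r+n},\bbV_{r\boxplus\sfa},\wpi_{r\boxplus\sfa}^{r\boxplus\sfc})$, and under $\Omega_n$ the diagonal embedding becomes exactly the shift $\bbI_\sfa^{r\oplus\sfa}$ (landing in $\bbS_{r\oplus\sfa}\subseteq\bbS_{r\boxplus\sfa}$), while the formula becomes $\Phi^{\uparrow r}$. That one application of \L o\'{s} for the expanded structure is what legitimately replaces your quantifier-widening step; with it, the formula induction becomes unnecessary.
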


\begin{proof}
The axiom (3) in $\mathbf{IS}$ follows from Proposition~\ref{P2}; the rest is obvious.

To prove that $\mathbf{GT}$ holds,  we recall that the mapping $\wpi_{\sfa}^{\infty} : \bbV_\sfa \to \bbV_\infty$  is an elementary embedding, ie, if $\psi(v_1,\ldots, v_\ell)$ is any $\in$--formula and 
$x_1,\ldots, x_\ell \in \bbS_\sfa$, then $\psi^{\bbS_\sfa} (x_1,\ldots, x_\ell) \eqi \psi (x_1,\ldots, x_\ell)$.
Using this observation twice shows that $$\forall x \in \bbS_\sfa\; \phi(x, x_1,\ldots, x_k) \to \forall x \in \bbS_\sfa\; \phi^{\bbS_\sfa}(x, x_1,\ldots, x_k) \to\forall x \; \phi (x, x_1,\ldots, x_k).$$

\textbf{HO}  is justified by the Factoring Lemma (take $n$ so that all labels occurring in $\Phi$ are proper subsets of $n$) and \L o\'{s}'s Theorem (specifically, by the fact that for each $n$ the canonical embedding of $ (\bbV_n,\in_n ,   \bbV_\sfa,  \,\wpi_\sfa^\sfc )$ into its ultrapower by $U_r$ is elementary).
\end{proof}

$\mathbf{SPOTS}$ is the theory $\SPOT^+  + \textbf{IS} + \textbf{GT} +\textbf{HO}$, where  
Nontriviality is modified to
 $\exists \nu \in \bbN \cap \bbS_1\; \forall^{\st} n \in \bbN\; (n \ne \nu)$
and \textbf{SN} and \textbf{SF} admit all formulas in the language of $\mathbf{SPOTS}$.

A consequence of $ \textbf{GT}$ is the following proposition.

\begin{prop}\label{gt}
The mapping $\bbI_\sfa^\sfc$ is an elementary embedding of $\bbS_\sfa$ into $\mathbb{I}$ (where $\mathbb{I}$ is the class of all sets) and into $\bbS_{\sfc'}$ for every $\sfc' \supseteq \sfc$.
\end{prop}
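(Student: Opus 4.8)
The plan is to derive the proposition from one schema: that each level $\bbS_\sfa$ is an elementary substructure of the class $\mathbb{I}$ of all sets. Precisely, I would first show that for every $\in$--formula $\phi(v_1,\ldots,v_k)$ and all $x_1,\ldots,x_k \in \bbS_\sfa$,
\[
\phi^{\bbS_\sfa}(x_1,\ldots,x_k) \eqi \phi(x_1,\ldots,x_k).
\]
As everywhere in the paper this is read schematically: one theorem of $\SPOTS$ per $\in$--formula $\phi$, the argument being a Tarski--Vaught induction carried out in the metatheory that, for a fixed $\phi$, invokes only the instances of $\mathbf{GT}$ attached to the (finitely many) subformulas of $\phi$. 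Granting the schema, the proposition follows by transporting elementarity along the isomorphisms and inclusions supplied by $\mathbf{IS}$.

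For the schema, note that contraposing $\mathbf{GT}$ (and writing $\psi = \neg\phi$) is exactly the Tarski--Vaught test for $\bbS_\sfa \preceq \mathbb{I}$: whenever $\exists x\,\psi(x,\bar x)$ holds with parameters $\bar x \in \bbS_\sfa$, some witness already lies in $\bbS_\sfa$. Since the $\in$--language is relational, $\bbS_\sfa$ with the induced membership relation is automatically a substructure of $(\mathbb{I},\in)$, and it is nonempty because it contains every standard set, so no closure conditions are needed. I then induct on the complexity of $\phi$: the atomic and propositional cases are immediate (relativization leaves $u=v$ and $u\in v$ unchanged and commutes with the connectives); for $\exists x\,\theta(x,\bar y)$, one direction is the induction hypothesis for $\theta$ and the other is $\mathbf{GT}$ applied to $\neg\theta$; the $\forall$ case reduces to the $\exists$ case.

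With the schema in hand I would conclude as follows. The term $\bbI_\sfa^\sfc$ denotes a function only when $|\sfa|=|\sfc|$ ($\mathbf{IS}$(1)); in that case $\mathbf{IS}$(4) makes it a bijection of $\bbS_\sfa$ onto $\bbS_\sfc$ that preserves and reflects $\in$, hence an isomorphism of $(\bbS_\sfa,\in)$ onto $(\bbS_\sfc,\in)$, so that $\phi^{\bbS_\sfa}(\bar x) \eqi \phi^{\bbS_\sfc}(\bbI_\sfa^\sfc(\bar x))$ for $\bar x\in\bbS_\sfa$ by the usual induction on $\phi$. Composing with the schema for the label $\sfc$ gives $\phi^{\bbS_\sfa}(\bar x) \eqi \phi(\bbI_\sfa^\sfc(\bar x))$, the asserted elementarity of $\bbI_\sfa^\sfc$ into $\mathbb{I}$. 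For $\sfc'\supseteq\sfc$ we have $\bbS_\sfc\subseteq\bbS_{\sfc'}$ by $\mathbf{IS}$(3), and the schema for both $\sfc$ and $\sfc'$ gives $\phi^{\bbS_\sfc}(\bar y) \eqi \phi(\bar y) \eqi \phi^{\bbS_{\sfc'}}(\bar y)$ for $\bar y\in\bbS_\sfc$, i.e.\ $\bbS_\sfc\preceq\bbS_{\sfc'}$; composing once more yields $\phi^{\bbS_\sfa}(\bar x) \eqi \phi^{\bbS_{\sfc'}}(\bbI_\sfa^\sfc(\bar x))$, the required elementarity into $\bbS_{\sfc'}$. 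I do not expect a genuine obstacle here: the whole proof uses only $\mathbf{IS}$(3), $\mathbf{IS}$(4) and $\mathbf{GT}$. The one point needing care is the schematic, metatheoretic handling of ``elementary embedding'' for proper-class structures, for which $\vDash$ is unavailable (as discussed in Section~\ref{ultrapower}): each instance of the proposition must be stated and proved separately, and one checks that the Tarski--Vaught induction for a given $\phi$ calls on $\mathbf{GT}$ only for subformulas of $\phi$.
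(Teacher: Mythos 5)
Your proof is correct: the paper states this proposition without proof (it is simply asserted to be a consequence of $\mathbf{GT}$), and your argument—contraposing $\mathbf{GT}$ into a Tarski--Vaught test to get the schema $\bbS_\sfa \preceq (\mathbb{I},\in)$, then composing with the $\in$-isomorphism from $\mathbf{IS}$(4) and the inclusion $\bbS_\sfc \subseteq \bbS_{\sfc'}$ from $\mathbf{IS}$(3)—is exactly the intended way to cash that assertion out, handled schematically formula by formula as the class setting requires. No gaps; this matches how the paper itself treats elementarity of the levels (for instance in the proof of Proposition~\ref{PP2}).
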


\begin{prop} \label{gtg}
Let $\Psi(v, v_1,\ldots,v_k)$ be an admissible formula and let $\Phi(v_1,\ldots,v_k)$ be either $\forall v\, \Psi(v, v_1,\ldots,v_k)$ or 
$\exists v\, \Psi(v, v_1,\ldots,v_k)$. Then $\mathbf{HO}$ holds for $\Phi$.

\end{prop}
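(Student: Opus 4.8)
The plan is to deduce $\mathbf{HO}$ for $\Phi$ from the axiom $\mathbf{HO}$ applied to the \emph{admissible} formulas of the form $\exists v\in\bbS_\sfc\,\Psi(v,v_1,\ldots,v_k)$ — which are admissible because $\Psi$ is — thereby trading the lone unrestricted quantifier of $\Phi$ for a bounded one and recovering the bound $\sfc$ from a witness by $\mathbf{IS}(2)$. It is enough to treat $\Phi(v_1,\ldots,v_k)=\exists v\,\Psi(v,v_1,\ldots,v_k)$, since the case $\Phi=\forall v\,\Psi$ follows by applying the $\exists$-case to the admissible formula $\neg\Psi$ and negating both sides of the resulting equivalence: the shift commutes with negation, $(\neg\Psi)^{\uparrow r}=\neg(\Psi^{\uparrow r})$, while $(\forall v\,\Psi)^{\uparrow r}=\forall v\,\Psi^{\uparrow r}$ and $\neg\forall v\,\Psi\eqi\exists v\,\neg\Psi$.

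Fix a standard $r$, a label $\sfa$, and $x_1,\ldots,x_k\in\bbS_\sfa$; write $\bar x=(x_1,\ldots,x_k)$ and $\bbI=\bbI_\sfa^{r\oplus\sfa}$. For the forward direction, suppose $\Psi(v_0,\bar x)$ holds; by $\mathbf{IS}(2)$ choose a label $\sfc$ with $v_0\in\bbS_\sfc$. Then the admissible formula $\exists v\in\bbS_\sfc\,\Psi(v,v_1,\ldots,v_k)$ holds at $\bar x$, so $\mathbf{HO}$ yields its shift at $\bbI(\bar x)$, namely $\exists v\in\bbS_{r\boxplus\sfc}\,\Psi^{\uparrow r}(v,\bbI(\bar x))$, whence a fortiori $\exists v\,\Psi^{\uparrow r}(v,\bbI(\bar x))=\Phi^{\uparrow r}(\bbI(\bar x))$. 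For the converse, suppose $\Psi^{\uparrow r}(w,\bbI(\bar x))$ holds; by $\mathbf{IS}(2)$ and $\mathbf{IS}(3)$ choose a label $\sfc$ with $\{0,\ldots,r-1\}\subseteq\sfc$ and $w\in\bbS_\sfc$. Since $\sfc$ contains $\{0,\ldots,r-1\}$, it has the form $\sfc=r\boxplus\sfc'$ with $\sfc'$ the label $\{s\,\mid\,r+s\in\sfc\}$ (one checks $r\boxplus\sfc'=\{0,\ldots,r-1\}\cup\{e\in\sfc\mid e\ge r\}=\sfc$). Consequently $\bbS_\sfc=\bbS_{r\boxplus\sfc'}$ is exactly the bound occurring in the shift of the admissible formula $\exists v\in\bbS_{\sfc'}\,\Psi(v,v_1,\ldots,v_k)$, i.e. that shift is $\exists v\in\bbS_\sfc\,\Psi^{\uparrow r}(v,v_1,\ldots,v_k)$, and it holds at $\bbI(\bar x)$, being witnessed by $w$. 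Reading $\mathbf{HO}$ for $\exists v\in\bbS_{\sfc'}\,\Psi$ from right to left then gives $\exists v\in\bbS_{\sfc'}\,\Psi(v,\bar x)$, hence $\Phi(\bar x)$.

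The delicate step is the converse. A witness $w$ lives in some level $\bbS_\sfc$, and in general one cannot shrink $\sfc$ so as to fit it inside a set of the shape $r\oplus\sfc'$ — this fails as soon as $\sfc$ meets $\{0,\ldots,r-1\}$ — so a naive ``un-shift'' of $\mathbf{HO}$ is not available. What makes the argument go through is exactly that $\uparrow r$ is built on $\boxplus$ rather than on $\oplus$: every set $r\boxplus\sfc'$ contains the full initial segment $\{0,\ldots,r-1\}$, so after padding $\sfc$ with that segment — harmless by $\mathbf{IS}(3)$ — it automatically acquires the form $r\boxplus\sfc'$, and the bounded admissible formula $\exists v\in\bbS_{\sfc'}\,\Psi$ is then precisely the one whose shift bounds $v$ by $\bbS_\sfc$. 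Beyond this observation, the proof is a direct appeal to the axiom $\mathbf{HO}$ together with the structural axioms $\mathbf{IS}(2)$ and $\mathbf{IS}(3)$.
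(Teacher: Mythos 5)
Your proof is correct and takes essentially the same route as the paper's: bound the unrestricted quantifier by a level using $\mathbf{IS}$, apply the axiom $\mathbf{HO}$ to the resulting bounded (admissible) formula, and reduce the universal case to the existential one by negation. The only cosmetic difference is that the paper bounds witnesses by initial-segment levels $\bbS_\sfn$, whose shifts are the levels $\bbS_{\sfn+r}$ and which exhaust the universe, whereas you pad an arbitrary label with $\{0,\ldots,r-1\}$ and decompose it as $r\boxplus\sfc'$; both devices accomplish the same step.
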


\begin{proof}
It suffices to  prove the existential version. 
Fix $r\in \bbN\cap \bbS_0$,   $\sfa \in \cP^{\fin}(\bbN)\cap \bbS_0$ and .$x_1,\ldots,x_k \in \bbS_\sfa$.

We have 
$\exists x\, \Psi(x, x_1,\ldots,x_k)$ if and only if there exists $\sfn$ such that $\exists x\in \bbS_{\sfn} \, \Psi(x, x_1,\ldots,x_k)$ if and only if (by applying $\mathbf{HO}$ to this admissible formula)   there exists $\sfn$ such that
$\exists x\in \bbS_{\sfn +r} \, \Psi^{\uparrow r}(x,\bbI_\sfa^{r\oplus\sfa}( x_1),\ldots, \bbI_\sfa^{r\oplus\sfa}(x_k) )$  if and only if
$\exists x\, \Psi^{\uparrow r}(x,\bbI_\sfa^{r\oplus\sfa}( x_1),\ldots, \bbI_\sfa^{r\oplus\sfa}(x_k) )$.
\end{proof}

Hence   the axioms of $\SPOT$, in particular $\SP$, postulated in $\SPOTS$ only about the level $\bbS_0$, hold there about every level $\bbS_\sfn$.

An important consequence of $\SPOTS$  asserts that  every natural number $k \in \bbS_\sfa$ is either standard or greater than all natural numbers at levels less than $\min \sfa$.

\begin{prop} \emph{(End Extension)} \label{EE}
Let $\sfa \neq \emptyset$ and  $n = \min \sfa \in \bbN$.

Then 
$\forall k \in \bbS_\sfa \cap \bbN\; (k\in \bbS_0 \,\vee\,  \forall m \in \bbS_n\; (m < k)).$
\end{prop}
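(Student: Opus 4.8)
The plan is to reduce End Extension to a property of a single level that we already control, namely the fact (noted after Proposition~\ref{gtg}) that all the $\SPOT$ axioms — in particular Proposition~\ref{HKP1}, ``standard naturals precede all nonstandard naturals'' — hold relativized to every level $\bbS_\sfn$. Fix $\sfa\neq\emptyset$ with $n=\min\sfa\in\bbN$, and take $k\in\bbS_\sfa\cap\bbN$. Write $\sfa = \{n\}\cup\sfa'$ where $n<\sfa'$ (possibly $\sfa'=\emptyset$). The idea is: $k$ lives at the level labelled by $\sfa$, but $\sfa$ is obtained from $\{n\}$ by adding coordinates that are \emph{above} $n$, so relative to the level $\bbS_n$ the element $k$ should look like an element of an ultrapower of $\bbS_n$ — and in such an ultrapower every element either comes from $\bbS_n$ (hence is $\bbS_n$-standard) or, if it is a natural number not in $\bbS_n$, it exceeds all $\bbS_n$-naturals by the relativized Proposition~\ref{HKP1}.

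Concretely, first I would argue the base case $n=0$ directly: here $\bbS_n=\bbS_0=\bbS$, $\min\sfa=0$ so $0\in\sfa$, and I want to show each $k\in\bbS_\sfa\cap\bbN$ is either standard or above every standard natural. This is exactly Proposition~\ref{HKP1} applied at level $\bbS_\sfa$ once we know $\bbS_0\subseteq\bbS_\sfa$ (axiom \textbf{IS}(3), since $0=\emptyset\subseteq\sfa$): a natural number in $\bbS_\sfa$ that is not $\bbS_\sfa$-nonstandard is $\bbS_\sfa$-standard, i.e.\ $\sr(\sfa,k)$ \emph{and} — wait, that only gives $k\in\bbS_\sfa$, not $k\in\bbS_0$. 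So the base case still needs the shift argument. Thus the uniform approach: apply \textbf{HO} (or Proposition~\ref{gtg}) to move the whole configuration down by $n$. Since $n=\min\sfa$, writing $\sfa = n\boxplus\sfb$ is \emph{not} generally possible (that would need $\sfa\setminus\{$small part$\}$ to be $n\oplus(\text{something})$), so instead I would shift \emph{up}: consider the level-$0$ statement ``$\ell\in\bbN$ is standard or larger than all naturals of $\bbS_0$'' — this is the relativization to $\bbS_0$ of Proposition~\ref{HKP1} combined with the observation that every natural number belongs to $\bbS_0$ is false; more precisely, the relevant admissible formula is $\Phi(\ell) \equiv \forall m\in\bbS_0\cap\bbN\,(m<\ell\to m\in\bbS_0)$, which $\SPOTS$ proves (it is the relativized Proposition~\ref{HKP1} at level $0$, valid since the $\SPOT$ axioms hold at $\bbS_0$, plus \textbf{IS}). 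Actually the content I need is sharper and I should state it as: for every $\ell\in\bbS_{\sfb}\cap\bbN$ with $\min\sfb$ arbitrary, $\ell\in\bbS_0$ or $\ell$ exceeds all $\bbS_0$-naturals — and this is precisely the instance ``$\sfa\mapsto\sfb$, $n\mapsto 0$'' i.e.\ essentially what we are proving but at $n=0$. So the real mechanism must be the \emph{Factoring Lemma} intuition captured in \textbf{HO}.

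Here is the cleaner route I would actually pursue. Apply \textbf{HO} with shift $r=n$ to the admissible formula $\Phi(x,x_1,\dots)\equiv$ ``$x\in\bbN$'' together with $\Psi(x)\equiv$ ``$x\in\bbS_{\sfb}\cap\bbN$ and $x\notin\bbS_0$ and $\exists m\in\bbS_0\cap\bbN\,(x\le m)$'' where $\sfb$ is a copy of $\sfa$ sitting above $0$ — the point being that $\sfa$ itself, since $\min\sfa=n$, equals $n\boxplus\sfb'$ is still not right. Let me instead use the isomorphism directly: by \textbf{IS}(4), if $|\sfb|=|\sfa|$ there is an isomorphism $\bbI_\sfb^\sfa:\bbS_\sfb\to\bbS_\sfa$; choose $\sfb$ with $\min\sfb = 0$ and the same size, specifically $\sfb = \{0\}\cup(\sfa'\ominus\text{shift})$ — the order-isomorphism matters, not the labels. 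Then $k = \bbI_\sfb^\sfa(k')$ for a unique $k'\in\bbS_\sfb\cap\bbN$ (naturals go to naturals since $\bbI$ is an $\in$-isomorphism fixing the standard $\bbN$ pointwise by \textbf{IS}(5), as $\emptyset\subset\sfb$ and $\bbI_\emptyset^\emptyset=\mathrm{Id}$). Now $0\in\sfb$ so by \textbf{IS}(3) $\bbS_0\subseteq\bbS_\sfb$, and $k'\in\bbS_\sfb$; applying Proposition~\ref{HKP1} \emph{relativized to level $\bbS_\sfb$} (valid by the remark after Proposition~\ref{gtg}) gives: either $k'$ is $\bbS_\sfb$-standard, or $k'$ is larger than every $\bbS_\sfb$-standard natural. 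That dichotomy still talks about $\bbS_\sfb$, not $\bbS_0$. To get down to $\bbS_0$ and to $\bbS_n$, I need one more input: End Extension \emph{at the bottom}, i.e.\ the statement ``$\forall k'\in\bbS_\sfb\cap\bbN\,(k'\in\bbS_0\vee\forall m\in\bbS_0\,(m<k'))$'' when $0=\min\sfb$. The main obstacle is exactly this base instance — and I expect it is proved in $\SPOTS$ by a separate argument using Nontriviality and \textbf{GT} at level $0$ versus level $\sfb$, or is simply Proposition~\ref{HKP1} applied at $\bbS_0$ once we notice that $\bbS_\sfb\cap\bbN$ with $0\in\sfb$ is an end-extension of $\bbS_0\cap\bbN$ because $\bbS_0$, being definable as the $0$-th level, satisfies the relativized End Extension tautologically at $n=0$. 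Granting that base case, the general case follows: transport via $\bbI_\sfa^\sfb$ back, use \textbf{IS}(5) to identify $\bbS_n$-elements correctly (if $k\in\bbS_0$ we are in the first disjunct; otherwise $k$ exceeds all of $\bbS_0\cap\bbN$, and then applying the End Extension hypothesis at the intermediate label $\{n\}$ — noting $\bbS_n=\bbS_{\{n\}}$ and $n=\min\sfa$, so $\{n\}\subseteq\sfa$ gives $\bbS_n\subseteq\bbS_\sfa$ — together with the relativized Proposition~\ref{HKP1} at $\bbS_\sfa$ forces $m<k$ for all $m\in\bbS_n\cap\bbN$). I would therefore structure the write-up as: (1) reduce to the case $\sfa=\{n\}$ using \textbf{IS}(3) monotonicity and the relativized Proposition~\ref{HKP1} at $\bbS_\sfa$; (2) for $\sfa=\{n\}$, apply \textbf{HO} with the shift that maps level $0$ to level $n$ to the admissible End-Extension formula at the bottom; (3) the bottom case is the relativized Proposition~\ref{HKP1} at $\bbS_0$ plus the modified Nontriviality $\exists\nu\in\bbN\cap\bbS_1\,\forall^{\st}n\,(n\ne\nu)$, which guarantees $\bbS_1\cap\bbN$ strictly end-extends $\bbS_0\cap\bbN$ and by induction on levels (external induction, Proposition~\ref{extind}) propagates to all $\bbS_\sfn$. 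The hard part is making step (2)'s shift precise: \textbf{HO} shifts by adding $r$ on the left ($\sfa\mapsto r\boxplus\sfa$), so to land the label $\{n\}$ I take $r=n$ and start from label $\emptyset$, i.e.\ I apply \textbf{HO}/Proposition~\ref{gtg} to an admissible formula $\Phi$ with all its $\bbS$-subscripts equal to $\emptyset$, whose shift $\Phi^{\uparrow n}$ then mentions only $\bbS_{n\boxplus\emptyset}=\bbS_n$ — and $\Phi$ is the bottom End-Extension assertion about $\bbS_0$ versus the single-point-label level, which is a theorem of $\SPOT^+$ relativized, hence available. I would close by remarking that this also re-proves, as the $\sfa=\{0\}$ special case reading upward, that each $\bbS_\sfn\cap\bbN$ is an initial segment of each $\bbS_\sfm\cap\bbN$ for $\sfn\le\sfm$.
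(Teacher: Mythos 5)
There is a genuine gap: your argument never actually reaches the disjunct $k\in\bbS_0$, because it is missing the one step the paper's proof turns on, namely axiom \textbf{IS}(3) read as an \emph{intersection}. The paper's proof is two lines: Proposition~\ref{HKP1}, written as $\forall m\in\bbN\cap\bbS_0\,\forall k\in\bbN\,(k\le m\to k\in\bbS_0)$, has its second quantifier unrestricted, so Proposition~\ref{gtg} applies and the shift by $r=n$ gives $\forall m\in\bbN\cap\bbS_n\,\forall k\in\bbN\,(k\le m\to k\in\bbS_n)$ --- a statement that already applies to your $k\in\bbS_\sfa$ with no transport of $k$ along any $\bbI$; then, if some $m\in\bbS_n\cap\bbN$ satisfies $m\ge k$, one gets $k\in\bbS_\sfa\cap\bbS_n=\bbS_{\sfa\cap n}=\bbS_0$, because $n=\min\sfa$ forces $\sfa\cap n=\emptyset$. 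You invoke \textbf{IS}(3) only in the monotone direction and never take this intersection, which is exactly why each of your routes stalls at the point you yourself flag (``the dichotomy still talks about $\bbS_\sfb$, not $\bbS_0$'') and why the ``base instance'' you then defer to is only asserted (``tautologically,'' ``I expect it is proved\dots''), not proved. (The $n=0$ instance is indeed immediate from Proposition~\ref{HKP1} alone, since its quantifier over the possibly nonstandard number ranges over all of $\bbN$; but that observation does not give the general case --- the intersection step does.)

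A second, compounding problem is your reading of $\bbS_n$. Under the paper's conventions a natural-number label is a von Neumann natural, so $\bbS_n=\bbS_{\{0,\dots,n-1\}}$ (compare $\bbS_1=\bbS_{\{0\}}$ in the modified Nontriviality axiom and $\bbV_2=\bbS_{\{0,1\}}$, $\bbV_3=\bbS_{\{0,1,2\}}$ in Section~\ref{szemeredi}), not $\bbS_{\{n\}}$. Hence your claim ``$\{n\}\subseteq\sfa$ gives $\bbS_n\subseteq\bbS_\sfa$'' is false for the intended $\bbS_n$; the correct fact is the opposite-flavored $\bbS_n\cap\bbS_\sfa=\bbS_0$, and it is precisely this disjointness of labels that makes the proposition true. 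Under your reading the statement would even be false: for $\sfa=\{n\}$, any nonstandard $k\in\bbS_{\{n\}}\cap\bbN$ (such exist, e.g.\ $\bbI_{\{0\}}^{\{n\}}(\nu)$ with $\nu$ from Nontriviality, which \textbf{IS}(4)--(5) keep outside $\bbS_0$) fails both disjuncts, since $k\not<k$. Finally, two of your auxiliary devices do not do the work asked of them: transporting the clause $\forall m\in\bbS_0\,(m<k')$ along $\bbI_\sfb^\sfa$ is not an instance of \textbf{HO} unless $\sfa$ is literally a shift of $\sfb$ of the form handled by $\Phi^{\uparrow r}$, which it generally is not when $\min\sfa=n>0$; and the external induction on levels via Nontriviality in your step (3) is neither needed nor carried out.
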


\begin{proof}
By Proposition~\ref{HKP1}, $\forall m \in \bbN \cap \bbS_0\; \forall k\in \bbN \;(k \le m \to k \in \bbS_0).$
By Proposition~\ref{gtg} this implies
$\forall m \in \bbN \cap \bbS_n \; \forall k \in \bbN \;(k \le m \to k \in \bbS_n ).$
If $k \in \bbS_\sfa \cap \bbN$ and $\exists m \in \bbS_\mathsf{n}\; (m \ge k)$
then \; $k\in \bbS_n$ by the above. As $\sfa \cap n = \emptyset$, we get $k \in \bbS_0$.
\end{proof}

$\mathbf{SCOTS}$ is the theory $\SCOT  + \textbf{IS} + \textbf{GT} +\textbf{HO} $ = $\mathbf{SPOTS} + \textbf{DC}$, where the axiom schema 
\textbf{DC} is formulated as follows.

\bigskip
\textbf{DC}  (Dependent Choice)  \emph{ Let $\Phi (u,v)$ be a formula in the language of $\SPOTS$, with arbitrary parameters. 
For any $\sfa$:}\\
\emph{
If  $\forall x \in \bbS_\sfa\, \exists y \in \bbS_\sfa\; \Phi(x,y)$, then for every $b \in  \bbS_\sfa$  there is a sequence $\bar{b} = \la b_n \,\mid\, n \in \bbN\ra \in \bbS_\sfa$ such that $b_0 = b $ and }
$\forall n \in \bbN \cap \bbS_0\; \Phi(b_n, b_{n+1})$.
 
\bigskip
$\DC$ implies  Countable Standardization (and hence $\textbf{SF}$). 

$\SC$ (Countable Standardization)  \emph{Let $\Psi(v)$ be  a formula in the language of $\SPOTS$, with arbitrary parameters.  Then}
$$
\exists S\in \bbS_0\; \forall x \in \bbS_0\; (x \in S \eqi x \in \bbN
\,\wedge\, \Psi(x)).
$$

\begin{thm}\label{maintheorem}
$\mathbf{SCOTS}$ is a conservative extension of $\ZF + \ADC$. 
 \end{thm}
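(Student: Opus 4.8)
The plan is to interpret $\SCOTS$ inside a forcing extension of a model of $\ZF+\ADC$ in which we have carried out the iterated ultrapower construction of Section~\ref{ultrapower}. More precisely, I would start with a countable transitive model $\cM$ of a large enough finite fragment of $\ZF+\ADC$ (or argue model-theoretically with the Reflection Principle and a compactness argument to reduce to the case of a single formula). The first obstacle is that $\cM$ need not contain any nonprincipal ultrafilter over $I=\bbN$ (indeed it may satisfy ``all ultrafilters are principal''), so the construction of $(\bbV_\infty,\in_\infty)$ cannot be done directly in $\cM$. The standard remedy — already used for $\SCOT$ in \cite[Theorem~5.10]{HK} — is to add such an ultrafilter by forcing: let $\bbP$ be the poset whose conditions are infinite subsets of $\bbN$ (in $\cM$), ordered by almost-inclusion, or equivalently force with $\cP(\bbN)/\mathrm{fin}$ as computed in $\cM$. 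Forcing with $\bbP$ over $\cM$ adds a generic ultrafilter $U\subseteq\cP(\bbN)^\cM$; crucially, because $\ADC$ holds in $\cM$ one checks that $\bbP$ is $\sigma$-closed in $\cM$ (any decreasing $\om$-sequence of conditions has a pseudo-intersection, obtained via a dependent choice), so the forcing adds no new reals, no new countable sequences of ground-model sets, and $\cM[U]\models\ZF+\ADC$ with the same reals and $H_{\om_1}$ as $\cM$. This is exactly the point flagged in the erratum as \emph{failing} in the $\ACC$-only case, which is why the theorem is stated (and provable) only over $\ZF+\ADC$.

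Next I would carry out, inside $\cM[U]$, the entire construction of Section~\ref{ultrapower}: form $U_\sfa=\pi[\otimes^{|\sfa|}U]$ for each finite $\sfa\subseteq\bbN$, the structures $\bbV_\sfa=\bbV^{I^\sfa}/U_\sfa$, the canonical embeddings $\wpi^\sfc_\sfa$ and the isomorphisms $\bbI^{\sfc}_\sfa$ for $|\sfa|=|\sfc|$, and finally the direct limit $(\bbV_\infty,\in_\infty)$. By Proposition~\ref{PP2} the structural axioms $\mathbf{IS}$, Generalized Transfer $\mathbf{GT}$, and the Homogeneous Shift $\mathbf{HO}$ hold under the interpretation $\bbS_\sfa\mapsto\bbV_\sfa$, $\bbI^\sfc_\sfa\mapsto\wpi^\sfc_\sfa$; note this proposition was proved assuming only $\ZFC$ over the \emph{construction}, and the only genuine use of choice there is the existence of $U$ and \L o\'{s}'s theorem, both of which are available in $\cM[U]$. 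One then verifies the remaining $\SCOTS$ axioms for this interpretation: $\mathbf{O}$ (Nontriviality, in the strengthened form $\exists\nu\in\bbN\cap\bbS_1$) holds because the diagonal class $[\mathrm{id}]_{U_{\{0\}}}\in\bbV_{\{1\}}$ is an infinite natural number above every standard one; Transfer $\T$ is a special case of $\mathbf{GT}$ with $\sfa=\emptyset$; Standard Part $\SP$ (hence $\SP''$) follows from $\om_1$-completeness considerations on the ultrafilter combined with the fact that the ground model's reals are all present — concretely, given $A\subseteq\bbN$ in $\bbV_\infty$, its ``standard trace'' $\{n\in\bbN^{\cM}\mid \wpi^\infty_\emptyset(n)\in_\infty A\}$ is a set of $\cM[U]=\cM$, and its image under $\wpi^\infty_\emptyset$ is the required standard $B$; $\mathbf{SN}$, $\mathbf{SF}$, and $\DC$ (in the $\SCOTS$ formulation) follow from $\ZF+\ADC$ in $\cM[U]$ together with the fact that each $\bbS_\sfa$ is an elementary copy of $\bbV^\cM$ via $\wpi^\infty_\sfa$ — so an instance of $\DC$ at level $\sfa$ reduces to ordinary $\ADC$ in the ground model $\cM$.

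Finally, for conservativity proper: suppose $\ph$ is an $\in$-sentence with $\SCOTS\vdash\ph$. Run the above with $\cM$ an arbitrary model of $\ZF+\ADC$ (using a finite-fragment reflection / compactness packaging, or a Boolean-valued version of the forcing so as to avoid countability of $\cM$). The interpretation gives a model of $\SCOTS$ inside $\cM[U]$ whose ``$\in$-part'' (the image of $\wpi^\infty_\emptyset$, i.e.\ $\bbS_0$ with $\in_\infty$) is isomorphic to $(\bbV,\in)^{\cM[U]}=(\bbV,\in)^{\cM}$ — here one uses that the forcing is $\sigma$-closed so $\cM[U]$ and $\cM$ have the same sets up to isomorphism, or at least that $\cM\prec\cM[U]$ for $\in$-formulas, which is all we need since $\ph$ is an $\in$-sentence. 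Since $\ph$ is provable in $\SCOTS$ and $\ph$ is an $\in$-sentence (so $\ph^{\bbS_0}$, i.e.\ $\ph$ relativized to standard sets, follows by $\T$-type reasoning, and $\ph^{\bbS_0}$ transfers back to $\ph$ in $\cM$), we conclude $\cM\models\ph$. As $\cM$ was an arbitrary model of $\ZF+\ADC$, $\ZF+\ADC\vdash\ph$. I expect the main obstacle to be bookkeeping: verifying that the $\sigma$-closure of $\bbP$ (equivalently, preservation of $H_{\om_1}$) really does underwrite every instance of $\DC$, $\mathbf{SF}$, and $\SP$ in the $\SCOTS$ language simultaneously, and handling the non-absoluteness of the satisfaction relation for the proper-class structures $(\bbV_\infty,\in_\infty)$ — which is why, as in Section~\ref{ultrapower}, one works formula-by-formula with the syntactic relativizations $\Phi^{\bbU}$ rather than a global $\vDash$, and packages the whole argument via a finite subtheory of $\SCOTS$ together with Reflection in $\ZF+\ADC$.
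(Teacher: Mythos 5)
Your overall strategy (force with $\bbP$ to add a generic ultrafilter, use $\ADC$ to get $\si$-closure, build the iterated ultrapower in the extension, verify the $\SPOTS$/$\SCOTS$ axioms, conclude conservativity via the standard copy of the ground model) is the paper's strategy, but there is a genuine gap at the pivot of the argument. You claim that $\si$-closure gives ``$(\bbV,\in)^{\cM[U]}=(\bbV,\in)^{\cM}$'' or at least $\cM\prec\cM[U]$ for $\in$-formulas. This is false: $\si$-closed forcing adds no new countable sequences of ground-model sets, but it certainly adds new sets --- the generic ultrafilter $U$ itself is a new family of ground-model reals, and since no new reals appear, $U$ is a genuine nonprincipal ultrafilter over $\om$ \emph{in} $\cM[U]$, while $\cM$ may satisfy ``every ultrafilter over $\om$ is principal.'' So $\cM\not\equiv\cM[U]$, and your construction, which takes the ultrapowers $\bbV^{I^\sfa}/U_\sfa$ of the universe of $\cM[U]$, makes $\bbS_0$ a copy of $\cM[U]$, not of $\cM$; the final step then yields $\cM[U]\models\ph$, which cannot be pulled back to the arbitrary model $\cM$ of $\ZF+\ADC$. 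The paper's device for exactly this problem is to iterate the ultrapower of the ground-model \emph{class} $\bbM$ (not of the full universe of the extension), working in the expanded structure $(M[\cG],\in^{\cM[\cG]},M)$ and using the folklore fact that it satisfies Separation, Replacement and Dependent Choice for formulas mentioning the predicate $\bbM$; then the standard level is exactly $M$ and conservativity follows.

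A second, smaller gap: you dispose of $\DC$, $\SN$ (and $\SC$, $\SF$) by saying that each $\bbS_\sfa$ is an elementary copy of the ground universe, so these ``reduce to $\ADC$ in $\cM$.'' They do not: these schemata admit arbitrary formulas of the full $\SPOTS$ language with arbitrary, possibly external parameters from $\bbM_\infty$, so the relevant relations are not definable over $\bbS_\sfa$ alone and elementarity of $\wpi^\infty_\sfa$ gives nothing. In the paper, $\DC$ is proved by passing to the ground-model forcing relation, using $\ADC$ plus Reflection to choose a descending sequence of conditions and names, using $\om$-closure of $\bbP$ to amalgamate them into a single condition in $\cG$, and then using $\om_1$-saturation of the ultrapower $\bbM_\sfa$ to turn the resulting external sequence into an internal one at level $\sfa$; $\SN$ uses the homogeneity of $\bbP$, so that the trace $\{a\in A\mid p\Vdash\Phi(\check a)^{\cM_\infty}\}$ is independent of $p$ and is a set of the ground model. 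Your argument for $\SP$ via the standard trace and ``no new reals'' is essentially right, but the $\DC$/$\SN$ verifications need the forcing-theoretic arguments above, not elementarity.
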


\begin{thm}\label{maintheorem2}
$\mathbf{SPOTS }$ is conservative over $\ZF + \ACC$. 
 \end{thm}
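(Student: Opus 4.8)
The plan is to mimic the conservativity proof for $\SCOTS$ over $\ZF+\ADC$ (Theorem~\ref{maintheorem}, whose argument via the iterated ultrapower is sketched in Sections~\ref{ultrapower}--\ref{spots}), but carried out inside a model of $\ZF+\ACC$ only. Given a model $\cM\models\ZF+\ACC$ and a sentence $\sigma$ in the language of $\SPOTS$ that is provable in $\SPOTS$, I want to show $\cM\models\sigma^{\in}$, where $\sigma^{\in}$ is the translation of $\sigma$ obtained by interpreting $\bbS_\sfa,\bbI_\sfa^\sfc$ appropriately. Since $\SPOTS = \SPOT^+ + \textbf{IS}+\textbf{GT}+\textbf{HO}$ and $\SPOT^+$ is $\SPOT+\textbf{SN}+\textbf{SF}$, and by Proposition~\ref{PP2} the structural/transfer/homogeneity axioms hold in any iterated ultrapower, the real content is: (i) build, over $\cM$, an iterated-ultrapower-like structure $\cM_\infty$ satisfying $\textbf{IS}+\textbf{GT}+\textbf{HO}$ together with the level-$0$ axioms $\T$, $\mathbf O$, $\SP$ of $\SPOT$ (which by Proposition~\ref{gtg} then hold at every level); and (ii) verify $\textbf{SN}$ and $\textbf{SF}$ in $\cM_\infty$ using only $\ACC$. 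The key point distinguishing this from the $\SCOTS$ case is that we are \emph{not} adding $\DC$, so we must not claim $\SC$ or anything requiring countable choice beyond $\ACC$.

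First I would fix in $\cM$ a nonprincipal ultrafilter-like object — but of course $\cM$ need not have one, so instead I follow the HK strategy: work with the forcing/Boolean-valued or term-model construction that HK~\cite{HK} use to get $\SPOT^+$ conservative over $\ZF$, and layer the iterated tensor powers on top. Concretely, pass to a model where a single ultrafilter $U$ over some index set $I$ exists (e.g. by forcing with $\bbP$ the poset adjoining a generic ultrafilter, or by the symmetric-extension technique of HK), form $\bbV_\sfa=\bbV^{I^\sfa}/U_\sfa$ and the direct limit $\bbV_\infty$ exactly as in Section~\ref{ultrapower}, interpret $\bbS_\sfa\mapsto\bbV_\sfa$, $\bbI_\sfa^\sfc\mapsto\wpi_\sfa^\sfc$, and then argue that the $\in$-theory of $\cM$ is preserved, i.e. anything true about the ground ``standard'' part reflects back to $\cM$. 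Proposition~\ref{PP2} gives $\textbf{IS}+\textbf{GT}+\textbf{HO}$ for free. For $\SP$ at level $0$ one needs the generic ultrafilter to be countably incomplete and the forcing to not add reals coded at level $0$ relative to $\bbN$ — this is where $\ACC$ enters, guaranteeing that $\cP(\bbN)$ behaves well enough that standard parts of bounded subsets of $\bbN$ exist. External induction / $\textbf{SF}$ then follows as in Proposition~\ref{extind}, and $\textbf{SN}$ (no parameters, or only standard parameters) reduces to ordinary Transfer plus Separation in $\cM$.

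The main obstacle — and the reason the published erratum flags Theorem~\ref{maintheorem2} as \emph{unproved} — is precisely the Factoring Lemma (Proposition~\ref{P3}) in the absence of $\ADC$. The proof that $\textbf{HO}$ (or even just the level-shifting consequences needed for $\SP$ at every level, Proposition~\ref{gtg}) transfers down to $\cM$ uses that the forcing with $\bbP$ over $\cM$ adds no new countable sequences, so that the iterated ultrapower computed in the extension agrees with one computable over $\cM$; with only $\ACC$ the poset $\bbP$ may fail to be $\sigma$-closed-like in the relevant sense and can add new countable sets, breaking the identification of $\cM_\infty$ with the intended structure and hence invalidating the claim that $\textbf{HO}$ holds in $\cM_\infty$. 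So the honest status is: the argument above \emph{would} prove Theorem~\ref{maintheorem2} if one could show the generic extension adds no new countable sequences, but under $\ZF+\ACC$ alone this step fails, and it remains open whether some different construction (or a cleverer choice of $\bbP$) can repair it — indeed it is open even whether $\SPOTS$ is conservative over plain $\ZF$.

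\begin{rem}
As the erratum at the front of this paper states, the argument sketched above does not in fact go through: the forcing with $\bbP$ over $\cM$ may add new countable sets when only $\ACC$ is assumed, which invalidates the Factoring Lemma and the verification of $\textbf{HO}$ in $\cM_\infty$. Theorem~\ref{maintheorem2} should therefore be regarded as \emph{open}; only the weaker Theorem~\ref{maintheorem} ($\SCOTS$ conservative over $\ZF+\ADC$) is established.
\end{rem}
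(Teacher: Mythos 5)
Your bottom line is the correct one, and it coincides with the paper's own position: in the current version there is no proof of Theorem~\ref{maintheorem2} for your attempt to be compared against. The front-matter correction states explicitly that the claim that $\SPOTS$ is conservative over $\ZF+\ACC$ is unproved, for exactly the reason you identify --- in the absence of $\ADC$ the forcing with $\bbP$ over $\cM$ may add new countable sets, which invalidates the Factoring Lemma (Proposition~\ref{P3}) for the iterated ultrapower built inside $\cM[\cG]$ and hence the verification of $\mathbf{HO}$ in $\cM_\infty$ --- and the old argument has been omitted. Section~\ref{conservativity} now proves only Proposition~\ref{mainprop} (hence Theorem~\ref{maintheorem}) and Proposition~\ref{sf}; the statement of Theorem~\ref{maintheorem2}, the corresponding sentence in the Introduction, and the line ``the proofs are given in Section~\ref{conservativity}'' are leftovers superseded by the correction. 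So your diagnosis matches the paper's erratum essentially verbatim; there is no gap between your assessment and the paper, only between the theorem as stated and what is actually established.

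Two refinements to your sketch are worth recording. First, the failure is not confined to the Factoring Lemma/$\mathbf{HO}$ step: already the verification of $\SP$ at level $0$ in $\cM_\infty$ needs that the forcing adds no new subsets of $\bbN$ (the trace on $M$ of an internal $A\subseteq\bbN$ must lie in $M$), and the paper's proof that $\widetilde{\bbP}$ is $\om$--closed uses $\ADC$ in the ground model, not merely $\ACC$; moreover \L o\'{s}'s Theorem is invoked inside $\cM[\cG]$, where one has no control over choice principles if $\cM$ satisfies only $\ACC$. So your middle paragraph's suggestion that $\ACC$ suffices for the level-$0$ axioms is already too optimistic for this construction. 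Second, describing Theorem~\ref{maintheorem} as ``weaker'' is loose: it concerns a stronger theory over a stronger base, and what it actually yields for $\SPOTS$ is conservativity over $\ZF+\ADC$ (since $\SPOTS\subseteq\SCOTS$); lowering the base to $\ZF+\ACC$, or to $\ZF$ as in the paper's Conjecture, remains open and would presumably require a genuinely different construction, e.g.\ an iterated analogue of the extended-ultrapower forcing $\bbH$ used for Proposition~\ref{sf}, in which \L o\'{s}'s Theorem is forced rather than proved from choice.
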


The proofs are given in Section~\ref{conservativity}.
\begin{conjecture}
$\mathbf{SPOTS}$ is a conservative extension of $\ZF$. 
 \end{conjecture}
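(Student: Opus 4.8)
The plan is to follow the template already used for Theorems~\ref{maintheorem} and~\ref{maintheorem2}, namely to build, from a model $\cM$ of $\ZF$, an iterated-ultrapower model $\cM_\infty$ in which all axioms of $\SPOTS$ hold, while keeping enough control over $\cM_\infty$ that any $\in$--sentence true there is already true in $\cM$. By Proposition~\ref{PP2} the structural axioms $\mathbf{IS}$, Generalized Transfer $\mathbf{GT}$, and the Homogeneous Shift $\mathbf{HO}$ hold in the iterated ultrapower automatically; by Proposition~\ref{gtg} and the remark after it, the $\SPOT$--axioms (Standard Part $\SP$, Nontriviality, Transfer) hold at every level provided they hold at level $\bbS_0$, and $\SP$ at level $\bbS_0$ is the familiar statement about $\bbV^I/U$ that is arranged by choosing the ultrafilter $U$ on $I=\bbN$ appropriately (this is exactly the content of \cite[Theorem B]{HK}, which gives the conservativity of $\SPOT^+$, including $\mathbf{SN}$ and $\mathbf{SF}$, over $\ZF$). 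The conservativity over $\ZF$ then reduces to: (i) performing the whole iterated-ultrapower construction \emph{inside} $\cM$ without any appeal to choice beyond what $\ZF$ proves, and (ii) verifying that the elementary embedding $\wpi_\emptyset^\infty:\cM\to\cM_\infty$ is such that $\cM_\infty\vDash\sigma$ implies $\cM\vDash\sigma$ for $\in$--sentences $\sigma$ — which is immediate once $\wpi_\emptyset^\infty$ is elementary, since $\cM\cong\bbV_\emptyset$ embeds elementarily.

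The first thing I would do is isolate the single ultrafilter one needs. Over $\ZF$ alone one cannot get a nonprincipal ultrafilter on $\bbN$; but the conservativity proof does not need a genuine ultrafilter in $\cM$ — it needs the \emph{theory} $\SPOTS$ to be consistent relative to $\ZF$, i.e.\ a model of $\SPOTS$ in some outer universe whose $\in$--part collapses to a model of $\ZF$. So I would instead work as in the proof of Theorem~\ref{maintheorem2}: pass to a suitable definable/forcing extension or use a symmetric construction in which the ultrafilter exists generically, then take the iterated ultrapower there. The key point to preserve is that the $\in$--reduct of the resulting structure is elementarily equivalent to (an elementary extension of) $\cM$, so that no new $\in$--sentences become true. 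Concretely, I would (1) force with a poset $\bbP$ over $\cM$ that adjoins a nonprincipal ultrafilter $U$ on $\bbN$ while adding no new sets of ordinals — or, failing that, while adding no new \emph{countable} sets, since countable sets are all that $\SP$, $\mathbf{SF}$ and the Factoring Lemma (Proposition~\ref{P3}) ever see; (2) in $\cM[U]$ form the iterated ultrapower $\bbV_\infty$ by $U$; (3) check $\mathbf{IS},\mathbf{GT},\mathbf{HO}$ by Proposition~\ref{PP2}, and $\SP/\mathbf{SN}/\mathbf{SF}$ at level $0$ by the $\SPOT^+$ argument of \cite{HK}, then propagate to all levels by Proposition~\ref{gtg}; (4) argue that $\in$--truth in $\bbV_\infty$ reflects to $\bbV_\emptyset\cong\cM$, hence to $\cM$.

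The hard part — and the reason this is only a conjecture — is exactly step (1): arranging the ultrafilter without disturbing the countable sets. The errata paragraph at the front of the paper already flags the failure of the analogous move: \emph{``in the absence of $\ADC$ the forcing with $\bbP$ over $\cM$ might add new countable sets,''} which invalidates the Factoring Lemma and hence the verification of $\mathbf{HO}$ in $\cM_\infty$. So the real obstacle is to find a way of producing $U$ (or of producing the iterated-ultrapower model directly) that provably over $\ZF$ alone adds no new countable sequences of ground-model sets; e.g.\ a forcing that is countably closed in a $\ZF$--provable sense, or a term/Boolean-valued construction that builds $\cM_\infty$ without passing through a generic extension at all. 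If one had $\ACC$ this can be done (that is Theorem~\ref{maintheorem2}); in plain $\ZF$ the natural forcings are not known to have this property, and replacing them by a choiceless alternative is the crux. Absent such a device, the fallback would be a purely syntactic consistency proof — a cut-elimination / Henkin-style argument producing a model of $\SPOTS$ from a model of $\ZF$ term-by-term — but even there the homogeneity encoded in $\mathbf{HO}$, which is what forces the Factoring Lemma, appears to require the kind of countable-closure one is trying to avoid. I expect that settling the conjecture will hinge on a new idea for obtaining the homogeneous iterated ultrapower over $\ZF$ without any countable-choice input.
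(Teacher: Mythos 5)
You were asked to prove a statement that the paper itself leaves open: the conservativity of $\SPOTS$ over $\ZF$ is stated as a Conjecture, and the prefatory note explains that even the weaker claim over $\ZF+\ACC$ (Theorem~\ref{maintheorem2}) is currently unproved, precisely because ``in the absence of $\ADC$ the forcing with $\bbP$ over $\cM$ might add new countable sets,'' which invalidates the Factoring Lemma (Proposition~\ref{P3}) and hence the verification of $\mathbf{HO}$ in $\cM_\infty$. Your submission is, by your own account, a plan rather than a proof, and it does not close this gap: it follows exactly the template of the paper's proof for $\SCOTS$ over $\ZF+\ADC$ (force a generic ultrafilter over a countable model, build the iterated ultrapower, verify $\mathbf{IS}$, $\mathbf{GT}$, $\mathbf{HO}$ via Proposition~\ref{PP2}, propagate the $\SPOT$ axioms to all levels via Proposition~\ref{gtg}), and the step you flag as ``the hard part'' --- producing the ultrafilter without adding new countable sequences of ground-model sets --- is the very obstruction the paper identifies. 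So there is no paper proof to compare against, and your attempt leaves the decisive idea unsupplied; correctly diagnosing the obstruction is not the same as overcoming it.

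One further point deserves emphasis, because it shows the gap is wider than your step (1). Your plan treats level $0$ as settled by ``choosing the ultrafilter $U$ on $I=\bbN$ appropriately,'' citing \cite[Theorem B]{HK}. But over $\ZF$ a plain ultrapower $\bbV^{\bbN}/U$ does not satisfy \L o\'{s}'s Theorem --- the paper notes in Subsection~\ref{s3} that \L o\'{s} ``holds because $\ACC$ is available'' --- so even granting a generic ultrafilter that adds no countable sets, steps (2)--(3) stall without some choice in the extension. The actual $\ZF$-conservativity proof of $\SPOT^+$ does not go through a generic ultrafilter at all: it uses the extended-ultrapower forcing $\bbH$ of Subsection~7.5 (Proposition~\ref{sf}), which forces \L o\'{s}'s Theorem together with the generic object and is violently non-closed (it adds many reals and makes all ordinals countable). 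The open problem is precisely that nobody knows how to iterate that kind of construction so that the Factoring Lemma, and with it the homogeneity axiom $\mathbf{HO}$, survives; your fallback suggestions (a $\ZF$-provably countably closed forcing for $U$, or a term/Henkin construction) run into the same two bottlenecks --- $\omega$-closure of $\cP^{\infty}(\om)/_{\fin}$ and \L o\'{s} --- both of which are exactly the $\ACC$/$\ADC$-dependent ingredients one is trying to eliminate.
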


%%%%%%%%%%%%%%%%%%%%%%%%%%%%%%%%%%%%%%%%%%%%%%%%%%%%%%%%%%

\section{Jin's proof of Ramsey's Theorem in \textbf{SPOTS}} \label{ramsey}

\begin{rlem}
Given a coloring $c:[\bbN]^n \to r$ where $n, r \in \bbN$, there exists an infinite set $H \subseteq \bbN$ such that $c \uhr [H]^n$ is a constant function. 
\end{rlem}

We formalize in \textbf{SPOTS} the proof presented by Renling Jin~\cite{RJ2} in his invited talk at the conference
\emph{Logical methods in Ramsey Theory and related topics}, Pisa, July 9 -- 11, 2023.
It is included here with his kind permission.

\begin{proof}
It suffices to prove the theorem under the assumption  that $n, r, c$ are standard;
the general result then follows by Transfer.

Let $\bbI = \bbI_{\{0,1,\ldots,n-1\}}^{\{1,2,\ldots, n\}}$.
Fix $\nu \in \bbN \cap (\bbS_1 \setminus \bbS_0)$ and.
define the $n$-tuple $\bar{x} = \la x_1,\ldots, x_n\ra$  by
$x_1 = \nu$, \; $x_{i+1} = \bbI(x_i)$  for $i = 1,2,\ldots, n-1$
(the existence of $\bar{x}$ is justified by \textbf{SF}).
Let $c_0 = c(\bar{x})$.

 Define a strictly increasing sequence $\{ a_m\}_{m=1}^{\bullet} \subseteq \bbN$, where  $\bullet \in \bbN$ or $\bullet = \infty$, recursively,
using the notation $A_m = \{ a_1,\ldots, a_m\}$ (also $a_0 = 0$ and $A_0 = \emptyset$):

$a_{m+1} = \text{ the least } a \in \bbN \text{ such that } a > a_m \,\wedge\,  c\uhr [A_m \cup \{a\}  \cup \bar{x}]^n = c_0$\\  if such $a$ exists; otherwise $a_{m+1} $ is undefined and the recursion stops.

Let  $A = \bigcup_{m=1}^{\bullet}  A_m$. Then $A$ is a set and by $\SP$ there is a standard set $H$ such that $\forall^{\st} x\; ( x \in H \eqi x \in A)$.
Clearly  $c \uhr [H]^n = c_0$.

It remains to prove that $H$ is infinite, ie, that $a_m$ is defined and standard for all standard $m \in \bbN \setminus \{0\}$.

Fix a standard $m \in \bbN$. The sentence 
$$\exists x \in \bbN \cap \bbS_1\; \left( x > a_m \,\wedge\, c \uhr [ A_m \cup \{x, \bbI(x_1),\ldots,\bbI(x_{n-1} )\} ]^n = c_0 \right)$$
is true (just let $x = x_1$).

By \textbf{HO},
$\exists x \in \bbN \cap \bbS_0\; \left( x > a_m \,\wedge\, c \uhr [ A_m \cup \{x, x_1,\ldots,x_{n-1} \} ]^n = c_0 \right).$
Let $a_{m+1} $ be the least such $x$ and note that it is standard.

We have 
$c \uhr [ A_{m+1} \cup \{x_1,\ldots,x_{n-1} \} ]^n = c_0$. 
It remains to show that 
$c \uhr [ A_{m+1} \cup \{x_1,\ldots, x_{n-1}, x_{n} \} ]^n = c_0$.

Consider $\bar{b} = \{b_1 <\ldots<b_n\} \in   [ A_{m+1} \cup \{x_1,\ldots, x_{n-1}, x_{n} \} ]^n $.

If $b_n < x_n$ then $b_n \le x_{n-1}$ and $c(\bar{b} )= c_0$.

If $b_1 = x_1$ then $\bar{b} = \bar{x}$ and $c(\bar{b} )= c(\bar{x})) =c_0$.

Otherwise $b_1 \in \bbN \cap \bbS_0$ and $b_n = x_n$.
Let $p$ be the largest value such that $x_p \notin \bar{b}$ (clearly $1 \le p < n$) and
let $\mathbf{J} = \bbI_{\{0,\ldots, n-1\}}^{\{0,\ldots,p-1,p+1,\ldots,n\}}$.

Note that 
$\mathbf{J} (b_j) = b_j $ for $j \le p$,
$b_j = x_j$, and $\mathbf{J} (b_j) = \mathbf{J}(x_j) = x_{j+1}$ for $p < j \le n-1$
(because  $\bbI_{\{p,\ldots, n-1\}}^{\{p+1,\ldots,n\}} \subseteq \bbI, \mathbf{J}$, ie., $\bbI$ and $\mathbf{J}$ agree on $\bbS_{\{p,\ldots, n-1\}}$).
Let $\bar{b}' = \mathbf{J}^{-1}(\bar{b})$.
Then $\bar{b}' \in [A_{m+1} \cup \{x_1,\ldots,x_{n-1}\}]^n$, hence $c(\bar{b}') = c_0$. 
By \textbf{HO} shift via $\mathbf{J}$, $c(\bar{b}) = c_0$.
\end{proof}

%%%%%%%%%%%%%%%%%%%%%%%%%%%%%%%%%%%%%%%%%%%%%%%%%%%%%%%

\section{Jin's proof of Szemer\'{e}di's theorem in \textbf{SPOTS}} \label{szemeredi}

Jin's proof in~\cite{RJ} uses four universes ($\bbV_0, \bbV_1, \bbV_2$ and $\bbV_3$)  and some additional elementary embeddings.  
Let $\bbN_j = \bbN \cap \bbV_j$ and $\bbR_j = \bbR \cap \bbV_j$  for $j = 0,1,2,3$.
Jin summarizes the required properties of these universes:

\bigskip
0.
$\bbV_0 \prec \bbV_1 \prec \bbV_2 \prec \bbV_3$.

1.
$\bbN_{j+1}$ is an end extension of $\bbN_{j}$ \;($j = 0,1,2$).

2.
For $j' > j$, Countable Idealization holds from $\bbV_{j}$ to $\bbV_{j'}$:
Let $\phi$ be an $\in$--formula with parameters from $\bbV_{j'}$. Then
$$\forall  n \in \bbN_j\;\exists x  \in \bbV_{j'}\; \forall m \le n \;   \phi^{\bbV_{j'}} (m,x)\eqi 
\exists x\in \bbV_{j'} \; \forall n \in \bbN_j \; \phi^{\bbV_{j'}}(n,x).$$

3.
There is an  elementary embedding $i_{\ast}$ of 
$(\bbV_2; \bbR_0,\bbR_1)$ to $(\bbV_3; \bbR_1,\bbR_2)$.

4.
There is an elementary embedding $i_1$ of $(\bbV_1, \bbR_0)$ to $(\bbV_2, \bbR_1)$ such that
$i_1 \upharpoonright \bbN_0$ is an identity map and $i_1(a) \in \bbN_2 \setminus \bbN_1$ for each $a \in \bbN_1 \setminus \bbN_0$.

5.
There is an elementary embedding $i_2$ of $\bbV_2$ to $\bbV_3$ such that
$i_2 \upharpoonright \bbN_1$ is an identity map and $i_2(a) \in \bbN_3 \setminus \bbN_2$ for each $a \in \bbN_2 \setminus \bbN_1$.

\bigskip
These requirements are listed as Property 2.1 in arXiv versions v1, v2 of Jin's paper, and appear in a slightly different form in Section~2 of the Discrete Analysis version; see especially Property  2,7 there.
Our formulations  differ from his in two significant ways.  
\begin{itemize}
\item
Jin works model-theoretically and his universes are superstructures, that is, sets of $\ZFC$.
In contrast, our universes are proper classes. Nonstandard arguments work similarly in both frameworks.
\item
In Property 2 Jin postulates Countable Saturation, while the weaker Countable Idealization stated here is more suited for the axiomatic approach. In all instances where Property 2 is used in Jin's proof, Countable Idealization suffices.
\end{itemize}

\begin{prop}
$\mathbf{SPOTS}$ interprets Jin's Properties 0. -- 5.
\end{prop}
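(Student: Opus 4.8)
The plan is to adopt the interpretation suggested by Section~\ref{ultrapower}. Identifying each $j$ with the label $\{0,1,\dots,j-1\}$, interpret $\bbV_j$ as the level $\bbS_j$, so that $\bbV_0 = \bbS_0$, $\bbV_1 = \bbS_1$, $\bbV_2 = \bbS_{\{0,1\}}$, $\bbV_3 = \bbS_{\{0,1,2\}}$, and accordingly $\bbN_j = \bbN \cap \bbV_j$ and $\bbR_j = \bbR \cap \bbV_j$; interpret Jin's three auxiliary embeddings as the shift maps $i_1 := \bbI_{\{0\}}^{\{1\}}$, $i_2 := \bbI_{\{0,1\}}^{\{0,2\}}$ and $i_{\ast} := \bbI_{\{0,1\}}^{\{1,2\}}$, each read as a map of one level into another. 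Property~0 is then immediate: by \textbf{GT} each $\bbS_\sfa$ is an elementary substructure of $\mathbb{I}$ for $\in$-formulas, by \textbf{IS}(3) we have $\bbV_0 \sus \bbV_1 \sus \bbV_2 \sus \bbV_3$, and nested elementary substructures of a common structure are elementary in one another. Property~1 follows from End Extension: the instance of Proposition~\ref{EE} for $\sfa = \{0\}$ says that every $k \in \bbN_1$ is standard or exceeds all of $\bbN_0$, and applying \textbf{HO} to shift this admissible sentence by a standard $r$ --- using $r \boxplus 0 = r$ and $r \boxplus 1 = r+1$ --- yields ``$\bbN_{r+1}$ is an end extension of $\bbN_r$,'' in particular for $r = 0,1,2$.

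For the three embeddings (Properties 3--5) I would combine Proposition~\ref{gt} with \textbf{HO}. Proposition~\ref{gt} says each $\bbI_\sfa^\sfc$ is an elementary embedding of $\bbS_\sfa$ into $\bbS_{\sfc'}$ for every $\sfc' \supseteq \sfc$; since $\{1\} \sus \{0,1\}$, $\{0,2\} \sus \{0,1,2\}$ and $\{1,2\} \sus \{0,1,2\}$, this already makes $i_1$, $i_2$, $i_{\ast}$ elementary $\in$-embeddings of $\bbV_1$ into $\bbV_2$, of $\bbV_2$ into $\bbV_3$ and of $\bbV_2$ into $\bbV_3$ --- which is everything Property~5 demands of $i_2$. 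To pass to the expansions by the $\bbR_j$-predicates needed in Properties 3 and 4, observe that $i_1 = \bbI_{\{0\}}^{\{1\}}$ and $i_{\ast} = \bbI_{\{0,1\}}^{\{1,2\}}$ are precisely the maps $\bbI_\sfa^{r \oplus \sfa}$ occurring in \textbf{HO} with $r = 1$ at base labels $\sfa = \{0\}$ and $\sfa = \{0,1\}$; since shifting by $1$ sends $\bbS_\emptyset$ to $\bbS_{\{0\}}$ and $\bbS_{\{0\}}$ to $\bbS_{\{0,1\}}$, the homogeneous shift carries the defining formula of $\bbR_0$ to that of $\bbR_1$, and that of $\bbR_1$ to that of $\bbR_2$, which is exactly the preservation asked for. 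The restriction clauses are bookkeeping with \textbf{IS}: by \textbf{IS}(5), $i_1$ fixes $\bbS_\emptyset = \bbV_0$ pointwise and $i_2$ fixes $\bbS_{\{0\}} = \bbV_1$ pointwise, so they are the identity on $\bbN_0$, resp. $\bbN_1$; and by \textbf{IS}(3) two labels meet only in their intersection, so $i_1$ sends $\bbN_1 \setminus \bbN_0 = \bbN \cap (\bbS_{\{0\}} \setminus \bbS_\emptyset)$ into $\bbN \cap (\bbS_{\{1\}} \setminus \bbS_\emptyset) \sus \bbN_2 \setminus \bbN_1$, and likewise $i_2$ sends $\bbN_2 \setminus \bbN_1$ into $\bbN_3 \setminus \bbN_2$.

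Property~2 is the substantive point. Jin's appeals to Countable Saturation can everywhere be replaced by Countable Idealization (as already noted), and the latter is a theorem of $\SPOT$; the task is to obtain ``Countable Idealization from $\bbS_\sfn$ to $\bbS_\sfm$'' for labels $\sfn \sus \sfm$. The plan is to show that the $\SPOT$ proof of Countable Idealization relativizes to an arbitrary level $\bbS_\sfm$ once ``standard'' is reread as ``belongs to $\bbS_\sfn$'': the relativized Transfer follows from \textbf{GT} (from $\bbS_\sfn$) together with the nesting $\bbS_\sfn \sus \bbS_\sfm \sus \mathbb{I}$; the relativized Nontriviality is the existence of some $\nu \in \bbN \cap \bbS_\sfm$ lying above all of $\bbN \cap \bbS_\sfn$, obtained by shifting $\SPOTS$'s Nontriviality and invoking \textbf{IS}(3); and the relativized $\SP$ (and $\SN$, \textbf{SF}) is the instance ``$\SP$ about $\bbS_\sfn$'' furnished by \textbf{HO} and Proposition~\ref{gtg}, with its outermost unrestricted quantifier cut down to $\bbS_\sfm$. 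Granting this, Countable Idealization holds inside $\bbS_\sfm$ with standardness level $\bbS_\sfn$; specializing to $\sfn = j$ and $\sfm = j'$ then delivers Property~2.

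The hard part running through all of this is the interplay with admissible formulas: before \textbf{HO} can be applied to a sentence mentioning $\bbN$, $\bbR$, $\leq$ or the predicates $\bbR_j$, that sentence must first be rendered admissible by relativizing its hidden $\in$-quantifiers to the ambient level --- legitimate because \textbf{GT} makes each level an elementary substructure of $\mathbb{I}$ --- and one must check that the relativized form is genuinely stable under the shift. This is precisely the calibration that the revised notion of admissible formula is designed to provide; with it in place every step above is routine, so all of Jin's Properties 0--5, translated under the above interpretation, become provable in $\SPOTS$.
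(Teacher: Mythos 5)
Your proposal is correct and follows essentially the same route as the paper: the same choice of universes $\bbS_0,\bbS_{\{0\}},\bbS_{\{0,1\}},\bbS_{\{0,1,2\}}$ and embeddings $\bbI_{\{0\}}^{\{1\}},\bbI_{\{0,1\}}^{\{0,2\}},\bbI_{\{0,1\}}^{\{1,2\}}$, with Property 0 from \textbf{GT}, Property 1 from End Extension (shifted via \textbf{HO}), Property 2 by checking that each $(\bbS_{j'},\in,\bbS_j)$ satisfies $\SPOT$ with $\SP$ obtained from \textbf{HO} and Proposition~\ref{gtg}, and Properties 3--5 from \textbf{HO}, Proposition~\ref{gt} and the \textbf{IS} bookkeeping. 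The only cosmetic difference is that you obtain Property 4 directly from \textbf{HO} with $r=1$, $\sfa=\{0\}$, where the paper instead notes $i_1 = i_\ast \upharpoonright \bbV_1$ and reuses Property 3; you also spell out some steps (relativized Nontriviality, new elements mapping to new elements) that the paper leaves implicit.
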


\emph{Proof.}
We  define:
$\bbV_0 = \bbS_0$, $\bbV_1 = \bbS_{\{0\}}$, $\bbV_2 = \bbS_{\{0,1\}}$, 
 $\bbV_3 = \bbS_{\{0,1,2\}}$, and \\
$i_1 = \bbI_{\{0\}}^{\{1\}}$, $i_2= \bbI_{\{0,1\}}^{\{0,2\}}$,  $i_{\ast} = \bbI_{\{0,1\}}^{\{1,2\}}$.

Property 0  follows from $\textbf{GT}$, and Property 1 from Proposition~\ref{EE}.

Property 2.
Countable Idealization is a consequence of $\SPOT$, so it suffices to show that each $(\bbS_{j'}, \in, \bbS_j)$
satisfies the axioms of $\SPOT$. The axiom $\SP$ is the only issue.

$\SP$ holds in $(\mathbb{I}, \in, \bbS_0)$, hence it holds in every $(\mathbb{I}, \in, \bbS_j)$ by \textbf{HO}.
Its validity in $(\bbS_{j'}, \in, \bbS_j)$ follows.

Property 3.
If $\psi(v_1,\ldots, v_r)$ is a formula in the common language of the structures $(\bbS_2, \in, \bbS_0, \bbS_1)$ and $(\bbS_3, \in, \bbS_1, \bbS_2)$, then,  by \textbf{HO}, 
$$\forall\,x_1,\ldots,x_r \in \bbS_2\;[\psi^{\bbS_2} (x_1, \ldots, x_r) \eqi 
\psi^{\bbS_3} (\bbI_{\{0,1\}}^{\{1,2\}}(x_1), \ldots, \bbI_{\{0,1\}}^{\{1,2\}}(x_r)) ].$$
Properties 4. and 5. follow from Propositions~\ref{gt} and~\ref{EE} and the observation that 
$i_1 = i_\ast \upharpoonright \bbV_1$.
\qed

\bigskip
It remains to  show that $\SPOT$ proves the existence  of densities used  by  Jin.  This requires a careful appeal to Standardization.

\begin{defn}\label{densities}
In our notation:
\begin{enumerate}
\item
For  finite  $A\subseteq \bbN$ with $|A|$ unlimited, the \emph{strong upper Banach density of} $A$ is defined by
$$SD(A) =\sup{}^{\st}   
\{\sha( |A \cap P|/ |P| ) \,\mid\, |P|\text{ is unlimited}\}.$$

\item
If $S \subseteq \bbN$  has $SD(S) = \eta \in\bbR$ (note $\eta$ is standard) and $A \subseteq  S$, the \emph{strong upper Banach density  of} $A$ \emph{relative to} $S$ is defined by
$$SD_S(A) =\sup{}^{\st}   
\{\sha( |A \cap P|/ |P| ) \,\mid\, |P|\text{ is unlimited}
\,\wedge\, \sha(|S \cap P|/|P|) = \eta \}.$$ 
\end{enumerate}
\end{defn}

$\SPOT$ does not prove the  existence of the standard sets of reals whose supremum needs to be taken
(it does not allow Standardization over the uncountable set $\bbR$), but for the purpose of obtaining the supremum, a set of reals can be replaced by a set of rationals.

\begin{prop}\label{existdens}
$\SPOT$ proves the existence of SD$_S(A)$.
\end{prop}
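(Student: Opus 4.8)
\textbf{Proof plan for Proposition~\ref{existdens}.} The obstacle is that $\SD_S(A)$ is defined as $\sup{}^{\st}$ of a set of reals, but $\SPOT$ only provides Standardization over countable sets. The plan is to replace the set of reals $\{\sha(|A\cap P|/|P|)\mid \cdots\}$ by a suitable countable set of rationals with the same supremum, and then apply the countable form of Standardization available in $\SPOT$ (specifically $\SP''$, or its extension to $\bbN$-special formulas, Proposition~\ref{cssf}). Concretely, for each standard $k\in\bbN\setminus\{0\}$ and each standard $q$ of the form $j/k$ with $0\le j\le k$, I would consider the statement asserting the existence of a finite $P\subseteq\bbN$ with $|P|$ unlimited, $\sha(|S\cap P|/|P|)=\eta$, and $|A\cap P|/|P|\ge q - 1/k$; unwinding ``$|P|$ is unlimited'' and ``$\sha(\cdots)=\eta$'' in terms of standard natural-number bounds turns this into an $\bbN$-special formula in the standard parameter $\la j,k\ra$. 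Then Proposition~\ref{cssf} yields a standard set $T\subseteq\bbN\times(\bbN\setminus\{0\})$ collecting exactly those pairs $\la j,k\ra$ for which the statement holds, and from $T$ one forms the standard set $Q\subseteq\bbQ$ of the corresponding rationals $j/k$.

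The next step is to verify that $\sup Q$ (which exists in $\bbR$, with $Q$ standard, hence $\sup Q$ standard by Transfer applied inside the standard universe) equals the intended $\SD_S(A)$. This is the routine density-approximation argument: if $\sha(|A\cap P|/|P|)=s$ for some admissible unlimited $P$, then for every standard $k$ one has $|A\cap P|/|P|\ge s - 1/(2k)$, so picking $j$ with $j/k$ just below $s$ shows $\la j,k\ra\in T$ and hence $s - 1/k \le \sup Q$; letting $k$ range over standard naturals gives $s\le\sup Q$. Conversely every element of $Q$ is, up to $1/k$, bounded above by some such $s$, so $\sup Q$ does not exceed the supremum of the $s$'s. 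Hence $\sup Q$ is the least standard real $\ge$ all the values $\sha(|A\cap P|/|P|)$, which is exactly $\SD_S(A)$ by definition of $\sup{}^{\st}$.

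The same argument handles $\SD(A)$ in part (1) of Definition~\ref{densities} (drop the side condition on $S$), and in fact shows more generally that $\sup{}^{\st}$ of any family of shadows of rational-valued expressions indexed in an $\in$-definable way over unlimited parameters exists in $\SPOT$. The one point requiring a little care is checking that the auxiliary formula is genuinely $\bbN$-special: the quantifier ``$\exists P$'' ranges over arbitrary finite subsets of $\bbN$, not over standard objects, so it must sit in the $\in$-matrix $\psi$ of the $\bbN$-special formula, with only the quantifiers over the bounds witnessing unlimitedness and the approximation $\eta$-equality being $\forall^{\st}_{\bbN}$/$\exists^{\st}_{\bbN}$. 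Once the formula is written in that form, Proposition~\ref{cssf} applies verbatim, and no appeal to choice or to uncountable Standardization is needed, so the argument goes through in $\SPOT$ (and a fortiori in $\SPOTS$ at every level, by Proposition~\ref{gtg}).
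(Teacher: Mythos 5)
Your overall strategy coincides with the paper's: trade the external set of shadows for a set of standard rationals defined by a condition to which Countable Standardization for $\bbN$-special formulas (Proposition~\ref{cssf}) applies, then take the supremum of the resulting standard set. But there is a genuine gap at exactly the point you flag as ``requiring a little care.'' When you unwind ``$|P|$ is unlimited'' and ``$\sha(|S\cap P|/|P|)=\eta$'' into standard natural-number bounds, the formula you obtain has the shape $\exists P\,\forall^{\st}_{\bbN}\, i\,[\dots]$: the quantifier $\exists P$ stands \emph{in front of} the standard quantifiers, whereas an $\bbN$-special formula must have all $\forall^{\st}_{\bbN}/\exists^{\st}_{\bbN}$ quantifiers as an outermost prefix with $\exists P$ inside the $\in$-matrix. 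Simply decreeing that $\exists P$ ``sits in the matrix'' changes the condition: $\forall^{\st}_{\bbN}\, i\,\exists P\,[\dots]$ is a priori weaker than $\exists P\,\forall^{\st}_{\bbN}\, i\,[\dots]$, since it only supplies a separate witness $P_i$ for each standard bound $i$, not a single unlimited $P$ with $\sha(|S\cap P|/|P|)=\eta$. This is precisely where your verification breaks: the ``conversely'' half of your sup computation needs every standard pair $\la j,k\ra\in T$ to be witnessed by one admissible $P$, and the weaker form you standardize does not guarantee that.

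The missing ingredient is Countable Idealization, a theorem of $\SPOT$ that the paper invokes explicitly. After merging the two standard universal quantifiers into one, the bracketed condition is monotone in the bound $i$ (the condition for $i$ implies it for every $m\le i$), so Countable Idealization yields exactly the exchange $\forall^{\st}_{\bbN}\, i\,\exists P\,[\dots]\eqi\exists P\,\forall^{\st}_{\bbN}\, i\,[\dots]$; with that step inserted, the standardized set captures the intended condition and your sup argument goes through. Apart from this omission your route is the paper's: the paper even streamlines the bookkeeping by using the clause $q\le |A\cap P|/|P|$ for standard rational $q$ instead of your $q-1/k$ approximation to the shadow, but that difference is cosmetic.
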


\begin{proof}
We note that
$SD_S(A) =\sup{}^{\st}   
\{q \in \bbQ \,\mid\, \Phi(q)\}$  
where $\Phi(q)$ is the formula
$$\exists P\, [\,  \forall^{st}_\bbN i \,(|P| > i)
\,\wedge\, \forall^{st}_\bbN j (|\,|S \cap P|/|P| - \eta | < \tfrac{1}{j+1}) \,\wedge\, 
q \le |A \cap P|/ |P| \,] .$$

The formula $\Phi$ is equivalent to 
$$\exists P\, \forall^{st}_\bbN i\, [\,   (|P| > i)
\,\wedge\,  (|\,|S \cap P|/|P| - \eta | < \tfrac{1}{i+1}) \,\wedge\, 
q \le |A \cap P|/ |P| \,] ,$$
which, upon the exchange of the order of $\exists P$ and 
$\forall^{st}_\bbN i$, enabled by Countable Idealization, converts to an $\bbN$- special formula 
$$\forall^{st}_\bbN i\, \exists P\,  [\,   (|P| > i)
\,\wedge\,  (|\,|S \cap P|/|P| - \eta | < \tfrac{1}{i+1}) \,\wedge\, 
q \le |A \cap P|/ |P| \,] .$$
Proposition~\ref{cssf} concludes the proof.
\end{proof}

The definitions of these densities  relativize to every level $j > 0$.  Their existence at higher levels  follows from Proposition 6.3 and the observation (in the proof of Proposition 6.1, Property 2) that $(\bbS_{j'}, \in , \bbS_j)$ satisfies $\SPOT$.

%%%%%%%%%%%%%%%%%%%%%%%%%%%%%%%%%%%%%%%%%%%%%%%%%%%%%%%%%%%

\section{Conservativity} \label{conservativity}

Conservativity of $\SPOT$ over $\ZF$  was established in Hrbacek and Katz~\cite{HK}  by a construction that  extends and combines the methods of forcing developed by Ali Enayat~\cite{E}  and Mitchell Spector~\cite{Spr}.
Conservativity of $\SCOT$  over $\ZF + \ADC$ is obtained there as a corollary. Here we give a simple, more direct proof of the latter result that generalizes straightforwardly to the proof of conservativity of $\mathbf{SCOTS}$ over $\ZF + \ADC$.

We prove the following proposition.

\begin{prop}\label{mainprop}
Every countable model $\cM = (M, \in^{\mathcal{M}})$ of $\ZF + \ADC$
has an extension to a model of $\SCOTS$ in which elements of $M$ are exactly the standard sets.
\end{prop}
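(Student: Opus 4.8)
\textbf{Proof plan for Proposition~\ref{mainprop}.}
The plan is to build the model by literally carrying out, inside $\cM$, the iterated ultrapower construction reviewed in Section~\ref{ultrapower}. The first step is to obtain, inside $\cM$, a suitable ultrafilter. Since $\cM \models \ZF + \ADC$ but not necessarily $\AC$, $\cM$ need not contain any nonprincipal ultrafilter over $\bbN$; so instead I would follow the Enayat--Spector strategy and work with a generic ultrafilter. Concretely, force over $\cM$ with the poset of infinite subsets of $\bbN^{\cM}$ ordered by almost-inclusion (or, more precisely, a set-sized approximation to a ``complete'' ultrafilter as in~\cite{E},~\cite{Spr}) to adjoin an $\cM$-ultrafilter $U$ on $I = \bbN$. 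The key point to verify is that $\ADC$ is preserved and, crucially, that \emph{no new countable sequences of ground-model elements are added} --- this is exactly the property the forcing is designed to have, and it is what makes the Factoring Lemma (Proposition~\ref{P3}) applicable inside the extension. With such a $U$ in hand, form, inside $\cM[U]$, all the structures $\bbV_\sfa = \bbV^{I^\sfa}/U_\sfa$ for labels $\sfa \in \cP^{\fin}(\bbN)^{\cM}$, the canonical embeddings $\wpi^\sfc_\sfa$, and the direct limit $\bbV_\infty = (\bbV_\infty, \in_\infty)$.

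The second step is to \emph{interpret the language of $\SPOTS$ in $\bbV_\infty$} in the way indicated just before Proposition~\ref{PP2}: the universe is $\bbV_\infty$ with its membership $\in_\infty$; for a standard finite set $\sfa$ (which, under the identification $\bbV_{\emptyset} \cong \bbV \cong M$, corresponds to an actual finite subset of $\bbN^{\cM}$), set $\bbS_\sfa := \wpi^{\infty}_\sfa[\bbV_\sfa]$; and interpret the function symbol $\bs(\sfa,\sfc,\cdot)$, whenever $|\sfa| = |\sfc|$, by the isomorphism $\wpi^\sfc_\sfa$ described at the end of Section~\ref{ultrapower}. Under this interpretation $\bbS = \bbS_0 = \wpi^{\infty}_{\emptyset}[\bbV]$ is the copy of $M$ sitting inside $\bbV_\infty$, so ``the standard sets are exactly the elements of $M$'' holds by construction. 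Then one checks the axioms of $\SPOTS$. The structural axioms $\mathbf{IS}$, Generalized Transfer $\mathbf{GT}$, and Homogeneous Shift $\mathbf{HO}$ are exactly Proposition~\ref{PP2}, proved there from \L{}o\'s's Theorem, Proposition~\ref{P2}, and the Factoring Lemma; here that proof goes through verbatim \emph{inside} $\cM[U]$, which is legitimate precisely because the forcing added no new countable sets and $\cM[U] \models \ZF$. Nontriviality (in its modified $\bbS_1$ form) holds because the diagonal function $\mathrm{id}_I \in \bbV^I$ represents an element of $\bbS_{\{0\}} \cap \bbN$ that differs from every standard natural number. The single-level axioms $\SP$, $\mathbf{T}$, $\mathbf{SN}$, $\mathbf{SF}$ of $\SPOT^+$ hold about $\bbS_0$ by the conservativity argument for $\SPOT^+$ from~\cite{HK} applied to the ultrapower $\bbV_{\{0\}}$ over $\bbS_0$ (or, more directly, $\SP$ and $\mathbf{T}$ about $\bbS_0$ are standard facts about the ultrapower), and then hold about \emph{every} level $\bbS_\sfn$ by $\mathbf{HO}$ (Proposition~\ref{gtg}).

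The third step is $\mathbf{DC}$. This is where $\ADC$ in $\cM$ is used in an essential way (and is the analogue of the $\SCOT$-over-$\ZF+\ADC$ argument of~\cite{HK}): given an $\SPOTS$-formula $\Phi(u,v)$, a label $\sfa$, and the hypothesis $\forall x \in \bbS_\sfa\,\exists y \in \bbS_\sfa\,\Phi(x,y)$, one must produce, for any $b \in \bbS_\sfa$, a sequence $\la b_n \mid n \in \bbN\ra \in \bbS_\sfa$ with $b_0 = b$ and $\forall n \in \bbN \cap \bbS_0\,\Phi(b_n,b_{n+1})$. Since $\bbS_\sfa$ is (isomorphic to) an ultrapower of $M$ and $\cM[U] \models \ZF + \ADC$, one can apply $\ADC$ in $\cM[U]$ to the relation on $\bbS_\sfa$ defined by $\Phi$ (note $\Phi$, having its quantifiers relativized to $\bbS_\sfa$ by admissibility/structure, defines a class relation on $\bbS_\sfa$ that is actually a set relation after Scott's trick bounds it), obtaining an external $\om$-sequence through the relation; then one reflects this sequence back into $\bbS_\sfa$ --- since no new countable sets were added, such an $\om$-sequence of elements of the ultrapower $\bbV^{I^\sfa}/U_\sfa$ is coded by an element of that ultrapower, giving the required $\bar b \in \bbS_\sfa$. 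Verifying $\Phi(b_n,b_{n+1})$ for standard $n$ then uses that $\bbS_0$-natural numbers are genuine naturals (Proposition~\ref{HKP1}) together with \L{}o\'s.

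\textbf{Main obstacle.} The delicate point is the forcing in the first step: one must exhibit a forcing notion over an arbitrary countable $\cM \models \ZF + \ADC$ that (a) adjoins an ultrafilter $U$ on $\bbN$ rich enough that all the $\bbV_\sfa$ and the Factoring Lemma behave as in the $\ZFC$ picture of Section~\ref{ultrapower}, (b) adds no new countable sequences of ground-model sets (so $\mathbf{HO}$, hence the transfer of $\SP$ to all levels, and the $\mathbf{DC}$ reflection all work), and (c) preserves $\ADC$ in the extension. As the erratum at the front of the paper makes explicit, dropping $\ADC$ breaks exactly property~(b), which is why $\SPOTS$ over $\ZF + \ACC$ (and over $\ZF$) is left open; with $\ADC$ available the relevant distributivity of the forcing is what saves the argument. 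I would cite~\cite{E},~\cite{Spr}, and the conservativity proof of~\cite{HK} for the construction and the distributivity lemma, and then devote the bulk of the written-out proof to checking that the iterated-ultrapower apparatus and Proposition~\ref{PP2} relativize correctly to $\cM[U]$ and that $\mathbf{DC}$ follows from $\ADC^{\cM[U]}$.
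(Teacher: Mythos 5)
Your overall route is the paper's route: force over $\cM$ with the poset of infinite subsets of $\om$ (the paper uses inclusion, forcing-equivalent to almost-inclusion), use $\ADC$ to get $\om$-closure so that no new reals or countable subsets of $M$ appear, build the iterated ultrapower of $\bbM$ by the generic ultrafilter $\cG$ inside $(M[\cG],\in,M)$, and verify $\mathbf{IS}$, $\mathbf{GT}$, $\mathbf{HO}$ exactly as in Proposition~\ref{PP2}. Your treatment of $\DC$ is also in the right spirit, though your parenthetical that $\Phi$ defines ``a set relation after Scott's trick'' is wrong: $\bbS_\sfa$ is a proper class of $\cM[\cG]$ (an ultrapower of the whole universe $\bbM$), so one needs either the $(\ZF+\ADC)^{\bbM}$ schema in the extended language plus Reflection, or, as the paper does, an application of $\ADC$ in $\cM$ to a class of (condition, name) pairs via the forcing relation, followed by $\om$-closure, genericity, and $\om_1$-saturation of $\bbM_\sfa$ to internalize the external sequence.

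The genuine gap is $\SN$. In $\SPOTS$/$\SCOTS$ the schema $\SN$ must hold for \emph{all} formulas of the $\SPOTS$ language with standard parameters, i.e.\ formulas mentioning arbitrary levels $\bbS_\sfa$ and the maps $\bbI_\sfa^\sfc$, whose interpretation in $\cM_\infty$ depends on the generic $\cG \notin M$; so the truth of $\Phi(a)^{\cM_\infty}$ for standard $a$ is not decided inside $\cM$, and it is not clear a priori that the standardizing set lies in $M$. Your proposed justification --- invoking the $\SPOT^+$ conservativity argument of \cite{HK} ``applied to the ultrapower $\bbV_{\{0\}}$'' --- does not work: that argument concerns a different forcing (the extended-ultrapower forcing $\bbH$), and in any case would at best give standardization for single-level $\st$--$\in$--formulas, which $\mathbf{HO}$ (a level shift, not a level reduction) cannot upgrade to the full schema. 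The paper's missing ingredient is the \emph{homogeneity} of $\bbP$: for standard parameters, $p \Vdash \Phi(\check a)^{\cM_\infty}$ is independent of $p$, so $S=\{a\in A \mid p\Vdash \Phi(\check a)^{\cM_\infty}\}$ is definable in $\cM$, lies in $M$, and standardizes $\Phi$. Without this (or some substitute) your verification of the $\SCOTS$ axioms is incomplete; the rest of the proposal matches the paper's proof.
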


The difficulty is that $\cM$ may contain no nonprincipal ultrafilters. We add such an ultrafilter to $\cM$ by forcing, and then carry out the construction of the iterated ultrapower as in Section~\ref{ultrapower} inside this generic extension of $\cM$.

Jech~\cite{J} is the standard reference for forcing and generic extensions of well-founded models of $\ZF$. For details on the extension of this material to non-well-founded models see Corazza~\cite{Cor, Cor2}.

%%%%%%%%%%%

\subsection{Forcing}\label{s1}
In this subsection we work in $\ZF + \ADC$. 

\begin{defn}
Let $\bbP = \{ p \subseteq \omega \mid p \text{ is infinite}\}$. 
For $p, p' \in \bbP$ we say that $p'$ \emph{extends} $p$ (notation: $p' \le p$) if $p' \subseteq p$.
\end{defn}

The poset $\bbP$ is not \emph{separative} (Jech~\cite[Section 17]{J}); forcing with $\bbP$  is equivalent to forcing with $\widetilde{\bbP} = \cP^{\infty}(\om)/_{\fin}$.

The poset $\widetilde{\bbP} $ is $\om$--\emph{closed}: If  $\la p_n \,\mid\, n \in \om\ra$ is a sequence of conditions from $\bbP$ such that, for each $n \in \om$,  $p_{n+1} \setminus p_n $ is finite, then there is $p \in \bbP$ such that 
$p \setminus p_n$ is finite for all $n \in \om $.
It follows that the forcing with $\bbP$ does not add any new countable sets  (note that the proof of this fact uses $\ADC$).

The forcing notion $\bbP$ is \emph{homogeneous} in the sense that 
for $x_1,\ldots, x_s \in \bbV$ and $p, p' \in \bbP$ we have 
$p \Vdash \phi(\check{x}_1, \ldots, \check{x}_s)$ iff $p' \Vdash \phi(\check{x}_1, \ldots, \check{x}_s)$.
(Jech~\cite[Lemma 19.10 and related material]{J}.)

This is a consequence of the following fact (we let $p^c = \bbN \setminus p$):  For all $p_1, p_2 \in \bbP$ such that  $p^c_1, p^c_2$ are infinite, there is an automorphim $\pi$ of $\bbP$ such that $\pi (p_1) = p_2$.
It can be obtained as follows: Fix a one-one mapping $\al$ of $\om$ onto $\om$ such that $\al$ maps $p_1 $ onto $p_2$ in an order-preserving way, and maps $p^c_1 $ onto $p^c_2$ in an order-preserving way, and then define
$\pi (p) = \al [p]$.

%%%%%%
\subsection{Generic Extensions}\label{s2}
Let $\cM = (M, \in^{\mathcal{M}})$ be a countable model of $\ZF + \ADC$
and let $\cG$ be an $\cM$-generic filter over $\bbP^{\cM}$.
The generic extension $\cM[\cG]$ is a model of $\ZF + \ADC$ extending $\cM$ and 
the forcing does not add any new reals or countble subsets of $M$, ie,
every countable subset of $M$ in $\cM[\cG]$ belongs to $M$.

We need the following observation: The structure $(M[\cG], \in^{\mathcal{M}[\cG]}, M)$ is a model of $(\ZF + \ADC)^{\bbM}$, a theory obtained by adding a unary predicate symbol $\bbM$ to the $\in$--language of $\ZF$ and postulating that the axioms of Separation, Replacenment and Dependent Choice hold for formulas in this extended language. 
This is a piece of folklore; a proof can be given by adding the predicate $\bbM$ to the forcing language and defining  
$$p \Vdash \bbM(x) \text{ iff } \forall p' \le p\, \exists p'' \le p' \, \exists z  \,(p'' \Vdash x = \check{z}).$$ 
One can then prove the appropriate versions of Forcing Theoerm and  the Generic Model Theorem as in Jech~\cite[Section 18]{J}.

%%%%%%%%%%%

\subsection{Conservativity of $\SCOTS$ over $\ZF + \ADC$.}\label{s3}
We work in the structure $(M[\cG], \in^{\mathcal{M}[\cG]}, M)$, a model of $(\ZF + \ADC)^{\bbM}$, and use $\om$ to denote its set of natural numbers.
The generic filter $\cG$ is a nonprincipal ultrafilter over $\om$ and one can construct the expanded iterated ultrapower 
$$\cM_\infty = (\bbM_\infty, \in_\infty, \bbM_\sfa, \Pi_\sfa^\sfc; \sfa, \sfc \in \cP^{\fin}(\om), |\sfa| = |\sfc|)$$ 
 of $\bbM$ by $\cG$
as in Section~\ref{ultrapower} (let $I = \om$, $U = \cG$, and  replace $\bbV$ by $\bbM$). 

\L o\'{s}'s  Theorem holds 
because $\ACC$ is available, and 
$\Pi_0^\infty$ canonically embeds $\bbM$ into $(\bbM_\infty, \in_\infty)$. 
 The structure $\cM_\infty$ interprets the language of $\SPOTS$ (with $\bbS_\sfa$ interpreted as $\bbM_\sfa$ and $\bbI_\sfa^\sfc$ interperted as $ \Pi_\sfa^\sfc$).
As in Proposition~\ref{PP2}, the structure $\cM_\infty$ satisfies 
$\textbf{IS}$, $\textbf{GT}$ and $\textbf{HO}$.
It remains to show that $\SN$ and $\DC$ hold there.

\begin{prop}
$\DC$ holds in $\cM_\infty$.\label{Prop2}
\end{prop}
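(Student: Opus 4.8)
The plan is to reduce $\DC$ in $\cM_\infty$ to the version of $\ADC$ available in the ground model structure $(M[\cG], \in^{\mathcal{M}[\cG]}, M)$, which satisfies $(\ZF + \ADC)^{\bbM}$ — i.e. Dependent Choice for formulas in the $\in$--$\bbM$ language. The key point is that the iterated ultrapower $\cM_\infty$ is \emph{definable} inside this structure: $\bbM_\infty$, $\in_\infty$, each $\bbM_\sfa$, and each $\Pi_\sfa^\sfc$ are all given by explicit formulas (using $\bbM$, $\cG$, Scott's trick, etc.) in the language with the predicate $\bbM$. Consequently, for any formula $\Phi(u,v)$ in the language of $\SPOTS$, the relation ``$\Phi^{\cM_\infty}(x,y)$'' — and, crucially, the relation ``$x \in \bbM_\sfa \wedge y \in \bbM_\sfa \wedge \Phi^{\cM_\infty}(x,y)$ where $x,y$ are coded by functions'' — is expressible by an $\in$--$\bbM$ formula about $M[\cG]$.

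First I would fix $\sfa$, a formula $\Phi(u,v)$, and $b \in \bbM_\sfa$, and assume $\forall x \in \bbM_\sfa\,\exists y \in \bbM_\sfa\;\Phi^{\cM_\infty}(x,y)$. Elements of $\bbM_\sfa$ are (Scott-reduced) equivalence classes $[f]_\sfa$ of functions $f \in M^{\om^\sfa}$ lying in $M$; pick a representative function $f_0 \in M$ with $[f_0]_\sfa = b$. I would then apply $\ADC$ \emph{in $M[\cG]$} to the binary relation $R$ on the set of such representative functions defined by: $R(f,g)$ iff $[g]_\sfa$ is the Scott-minimal witness to $\Phi^{\cM_\infty}([f]_\sfa, [g]_\sfa)$ — or, more simply, just $R(f,g)$ iff $\Phi^{\cM_\infty}([f]_\sfa,[g]_\sfa)$, after checking totality of $R$ from the hypothesis. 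Since $R$ is an $\in$--$\bbM$-definable class relation on a set and the hypothesis gives totality, $\ADC^{\bbM}$ produces a sequence $\la f_n \mid n \in \om\ra \in M[\cG]$ with $f_0$ as chosen and $R(f_n, f_{n+1})$ for all $n \in \om$. Setting $b_n = [f_n]_\sfa$ gives $\Phi^{\cM_\infty}(b_n, b_{n+1})$ for all $n \in \om$; and $\forall n \in \bbN \cap \bbS_0$ is a weaker statement than $\forall n \in \om$, so it certainly holds.

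The remaining work is to package $\la b_n \mid n \in \om\ra$ as a \emph{single element of $\bbM_\sfa$}, as $\DC$ demands (the witnessing sequence $\bar b$ must itself be $\sfa$-standard). Here I would use that $M[\cG]$ adds no new countable subsets of $M$ (proven in Subsection~\ref{s1} using $\ADC$, via $\om$-closure of $\widetilde{\bbP}$): hence the sequence $\la f_n \mid n \in \om\ra$, being a countable sequence of elements of $M$, already belongs to $M$. Call it $f^\ast \in M$. Then one checks that the function $\mathbf{i} \mapsto \la [f_n]_\sfa \mid n \in \om\ra$ evaluated appropriately — concretely, the function $g \in M^{\om^\sfa}$ with $g(\mathbf{i}) = \la f_n(\mathbf{i}) \mid n\in\om\ra$ — satisfies $[g]_\sfa = \la b_n \mid n \in \om\ra$ in $\cM_\infty$ by \L o\'s's Theorem, so $\bar b = [g]_\sfa \in \bbM_\sfa$ as required, with $b_0 = b$.

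The main obstacle I anticipate is exactly this last coherence step: ensuring that the DC sequence obtained \emph{externally} (in $M[\cG]$) can be internalized as a genuine member of the level $\bbM_\sfa$, rather than merely as an external $\om$-sequence of members of $\bbM_\sfa$. This is where the no-new-countable-sets property of the forcing $\bbP$ — and hence the essential use of $\ADC$ in the ground model — does the decisive work; without it one would only get a sequence ``from the outside'' and $\DC$ as formulated would fail. A secondary but routine point is to verify that all the relevant relations ($\in_\infty$, membership in $\bbM_\sfa$, the $\Pi_\sfa^\sfc$, and $\Phi^{\cM_\infty}$ itself) are $\in$--$\bbM$-definable over $M[\cG]$, so that $\ADC^{\bbM}$ genuinely applies; this follows from the explicit nature of the construction in Section~\ref{ultrapower} together with the fact that ``$p \Vdash \bbM(x)$'' was made part of the forcing language in Subsection~\ref{s2}.
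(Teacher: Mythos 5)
Your proposal is correct in substance, but it reaches the conclusion by a genuinely different route than the paper. The paper stays inside the forcing machinery: it expresses the hypothesis in the forcing language, uses Reflection and $\ADC$ (in the ground model, where $\Vdash$ is definable) to build a descending $\om$-sequence of conditions together with names $a_n$, invokes the $\om$-closure of $\bbP$ and a density/genericity argument to realize all the statements $\Phi^{\cM_\infty}(a_n,a_{n+1},c)$ under a single condition in $\cG$, and then internalizes the resulting external sequence by appealing to the $\om_1$-saturation of the ultrapower $\bbM_\sfa$. You instead work entirely in $(M[\cG],\in^{\cM[\cG]},M)$, applying the Dependent Choice schema of $(\ZF+\ADC)^{\bbM}$ directly to the $\in$--$\bbM$-definable relation on representatives (with $\cG$ as a parameter), and you replace the saturation step by the no-new-countable-subsets-of-$M$ property, which lets you pull the sequence of representatives $\la f_n \mid n\in\om\ra$ into $M$ and exhibit an explicit representative $g$ with $g(\mathbf{i})=\la f_n(\mathbf{i})\mid n\in\om\ra$, so that $[g]_\sfa$ is the required internal sequence. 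This is legitimate because both ingredients you rely on are established in Subsections 7.1--7.2, and it buys a shorter argument in which all forcing content is encapsulated in those two facts; the paper's version, by contrast, re-runs the $\om$-closure and genericity arguments explicitly and uses the standard saturation tool, which keeps the proof self-contained at the level of the forcing relation. Two presentational points you should repair: the collection of representative functions $f\in\bbM$ with $f:\om^\sfa\to\bbM$ is a proper class of the structure, not a set, so you must either read the Dependent Choice of $(\ZF+\ADC)^{\bbM}$ as a schema for definable class relations or first cut down by Reflection (exactly as the paper does with ``Using Reflection and $\ADC$''); and $[g]_\sfa$ is not literally equal to the external sequence $\la b_n\mid n\in\om\ra$ --- it is an internal function with domain the internal $\bbN$ whose values at standard indices are the $b_n$, which is precisely what the conclusion of $\DC$ (quantified only over $n\in\bbN\cap\bbS_0$) requires.
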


\begin{proof}

Let $\Phi (u,v,w)$ be a formula in the language of $\SPOTS$.
Let $b \in \bbM_\sfa$ and $c \in \bbM_\infty$ be such that  
$$\Psi(B,b,c) : \quad [ b \in \bbS_\sfa\,\wedge\, \forall x \in  \bbS_\sfa\; \exists y \in  \bbS_\sfa\; \Phi(x,y,c)]^{\cM_\infty}$$ holds. 
(The superscript $\cM_\infty$ indicates that the quantifiers range over $\bbM_\infty$ and all symbols are interpreted in $\cM_\infty$.)
$\Psi$ is (equivalent to) a formula of the forcing language (we identify $b,c$ and $\sfa$ with their names in the forcing language), hence there is $p \in \bbP \cap \cG$ such that $p \Vdash \Psi$.
Let $p_0 \le p$.

We let the variable $a$ (with decorations) range over the names in the forcing language and define the class
$$\mathbf{A} = \{ \la p', a' \ra \,\mid\, p' \le p_0 \,\wedge\, p' \Vdash [a' \in\bbS_\sfa]^{\cM_\infty}\}.$$
Note that $\la p_0, b \ra \in \mathbf{A} $,   and define $\mathbf{R}$ on $\mathbf{A} $ by
$$ \la p', a' \ra \mathbf{R} \la p'', a'' \ra \text{ iff } p'' \le p' \,\wedge\, p'' \Vdash \Phi^{\cM_\infty}(a', a'',c).$$

It is clear from the properties of forcing that for every $ \la p', a' \ra \in \mathbf{A} $ there is $\la p'', a'' \ra  \in \mathbf{A} $ such that $ \la p', a' \ra \mathbf{R} \la p'', a'' \ra $. Using Reflection and $\ADC$ we obtain a sequence 
$\la \la p_n, a_n \ra \,\mid\, n \in \om \ra$ such that  $a_0 = b$, and, for all $n \in \om$, $\la p_n, a_n\ra \in A$, $p_{n+1} \le p_n$ and $p_{n+1} \Vdash \Phi^{\cM_\infty}(a_n, a_{n+1},c)$.

As the forcing is $\om$--closed, one obtains $p_\infty \in \bbP$ and $\la a_n \mid n \in \om\ra$ such that $p_\infty \le p_0$ and 
$p_\infty \Vdash  [a_n \in  \bbS_\sfa\,\wedge\, \Phi(a_n, a_{n+1},c)]^{\cM_\infty}$ for all $n \in \om$.

By the genericity of $\cG$ there is some $p_\infty \in \cG$ and the associated sequence $\la a_n \mid n \in \om\ra$  with this property.
Hence $(M[\cG], \in^{\mathcal{M}[\cG]}, M)$ satisfies 
$[a_n \in \bbS_\sfa\,\wedge\, a_0 = b \,\wedge\,\Phi(a_n, a_{n+1},c)]^{\cM_\infty}$ for all $n \in \om$.

The class $\bbS_\sfa$ is interpreted in $\cM_\infty$ by 
the ultrapower $\bbM_\sfa = \bbM^{I_\sfa}/ U_\sfa$ (for $U = \cG$) .
Since this ultrapower is $\om_1$--saturated, there is 
$\bar{b} \in \bbM_\sfa$ such that $$[ \bar{b} \text{ is a function } \,\wedge\, \dom \bar{b} = \bbN \,\wedge\, b_n =a_n]^{\bbM_\sfa}$$ holds for every $n \in \om$.
This translates to the desired
$$[ \bar{b}  \in \bbS_{\sfa} \,\wedge\,  \dom \bar{b} = \bbN  \,\wedge\, b_0=b \,\wedge\, \forall n \in \bbN \cap \bbS_0\;\Phi(b_n, b_{n+1},c)]^{\cM_\infty}.$$
\end{proof}

\begin{prop}
$\SN$ holds in $\cM_\infty$.
\end{prop}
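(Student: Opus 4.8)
The plan is to mimic the structure of the proof that $\DC$ holds in $\cM_\infty$, working in the model $(M[\cG], \in^{\cM[\cG]}, M)$ of $(\ZF + \ADC)^{\bbM}$ and using the homogeneity of the forcing $\bbP$. Recall that $\SN$ asks, for an $\st$--$\in$--formula $\Phi(v)$ with \emph{only standard parameters} (equivalently no parameters), that $\forall^{\st} A\,\exists^{\st} S\,\forall^{\st} x\,(x\in S\eqi x\in A\wedge\Phi(x))$. Since the standard sets of $\cM_\infty$ are exactly the elements of $M$, it suffices to fix $A\in M$ and produce $S\in M$ with $M\models\big(\forall x\in A\,(x\in S\eqi \Phi^{\cM_\infty}(x))\big)$ — where the parameters of $\Phi$, being standard, are also names for elements of $M$.

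First I would express the relation $\Phi^{\cM_\infty}(x)$, for $x$ ranging over (names of) elements of $M$, in the forcing language over $\bbP$. Because $\bbP$ is homogeneous (for all $p_1,p_2$ with infinite complements there is an automorphism of $\bbP$ carrying $p_1$ to $p_2$, and $\widetilde\bbP$ is the separative quotient), the truth value of $p\Vdash\Phi^{\cM_\infty}(\check x)$ for a \emph{check-name} $\check x$ with $x\in M$ does not depend on $p$; that is, either $1\Vdash\Phi^{\cM_\infty}(\check x)$ or $1\Vdash\neg\Phi^{\cM_\infty}(\check x)$. Here it is essential that all parameters of $\Phi$ are standard, i.e. check-names from $M$, so the whole formula $\Phi^{\cM_\infty}(\check x)$ is a statement about check-names only and homogeneity applies. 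Hence the set
$$S=\{\,x\in A \;\mid\; 1\Vdash_{\bbP}\Phi^{\cM_\infty}(\check x)\,\}$$
is definable in $\cM$ by a formula in the $\in$--$\bbM$ language applied with parameters in $M$ (the forcing relation for a fixed formula is definable in the ground model), so by the Separation schema of $(\ZF)^{\bbM}$ in $M[\cG]$ — and since this definition only refers to objects of $M$ — we get $S\in M$. Finally, by genericity $\cG$ meets every dense set, so for each $x\in A$ we have $\cM_\infty\models\Phi(x)$ iff some $p\in\cG$ forces $\Phi^{\cM_\infty}(\check x)$ iff (by homogeneity) $1$ forces it iff $x\in S$; thus $S$ witnesses $\SN$ for this instance of $\Phi$ and $A$.

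The main obstacle I anticipate is twofold. (i) One must check that $\Phi^{\cM_\infty}(\check x)$ really is (equivalent to) a single formula of the forcing language: the iterated ultrapower $\cM_\infty$ is a proper class inside $M[\cG]$ built from $\cG$, and its satisfaction relation restricted to one fixed formula $\Phi$ must be unwound — level by level, through the direct limit $\bbM_\infty=\varinjlim\bbM_\sfa$ and the ultrapowers $\bbM_\sfa=\bbM^{I^\sfa}/U_\sfa$ — into an $\in$--formula over $M[\cG]$ with the predicate $\bbM$ and the parameter $\cG$, which in turn becomes a forcing statement over $M$. This is routine but must be stated carefully; crucially $\ACC$ (available in $M$) is what makes \L o\'s's Theorem and hence this unwinding go through. (ii) One must confirm that homogeneity in the form stated (invariance of $p\Vdash\phi(\check x_1,\dots,\check x_s)$ across $p$) extends from $\in$--formulas $\phi$ to the formula $\Phi^{\cM_\infty}$, i.e. that the parameter $\cG$ does not spoil homogeneity. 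It does not, because $\cG$ enters only through the ultrapower construction and the automorphisms $\pi$ of $\bbP$ act coherently on the names for $\bbM_\sfa$ and $\Pi_\sfa^\sfc$ (they are all built from $\check{\ }$-names of elements of $M$ together with $\cG$), so an automorphism-of-$\bbP$ argument as in the homogeneity lemma carries $p_1\Vdash\Phi^{\cM_\infty}(\check x)$ to $p_2\Vdash\Phi^{\cM_\infty}(\check x)$. Once these two points are in place, the displayed definition of $S$ and the genericity argument finish the proof.
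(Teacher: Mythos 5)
Your proposal is correct and follows essentially the same route as the paper: translate $\Phi^{\cM_\infty}$ into the forcing language, use homogeneity of $\bbP$ to make the forcing relation on check-names independent of the condition, define $S$ in the ground model from the forcing relation (the paper fixes some $p\in\cG$ where you use the weakest condition, which is the same thing under homogeneity), and conclude by genericity. Your extra care about why homogeneity survives the appearance of the generic filter in the definition of $\cM_\infty$ is a point the paper leaves implicit, but it does not change the argument.
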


\begin{proof}

Let $\Phi (u)$ be a formula in the language of $\SPOTS$ (with no parameters) and $A\in \bbM$.
Let $\Psi(u)$ be the formula $\Phi(u)^{\cM_\infty}$ of the forcing language.
By homogeneity of the forcing, $p \Vdash \Psi(\check{a})$ iff $p' \Vdash \Psi(\check{a})$ holds for all $a \in A$ and $p, p' \in \bbP$. Fix some $p \in \cG$ and let $S = \{ a \in A \,\mid\, p \Vdash \Psi(\check{a})\}$.
For $a \in \bbS$ then $a \in S$ iff $a \in A \,\wedge\, \Phi(a)^{\cM_\infty}$  holds.
\end{proof}

The structure $\cM_\infty$ is a class model of $\SCOTS$ constructed inside the countable model
$(M[\cG], \in^{\mathcal{M}[\cG]}, M)$.
It converts into a countable model $\widetilde{\cM}_\infty$ in the meta-theory so that 
$\Phi^{\cM_\infty}  \eqi \widetilde{\cM}_\infty \vDash \Phi$ for all formulas in the language of $\SCOTS$.

%%%%%%%%

\subsection{Finitistic proofs}
\label{finitistic}
The model-theoretic proof of Proposition~\ref{mainprop} in Subsections~\ref{s1} -- \ref{s3} is carried out in $\ZF$. 
Using techniques from Simpson~\cite[Chapter II, especially II.3 and II.8]{Si}, it can be verified that the  proof goes through in $\textbf{RCA}_0$ (wlog one can assume that $M \subseteq \omega$).

The proof of Theorem~\ref{maintheorem} from Proposition~\ref{mainprop} requires the G\"{o}del's Completeness Theorem and therefore $\textbf{WKL}_0$; see~\cite[Theorem IV.3.3]{Si}.
We conclude that Theorem~\ref{maintheorem} can be proved in $\textbf{WKL}_0$. 

Theorem~\ref{maintheorem}, when viewed as an arithmetical statement resulting
from identifying formulas with their G\"{o}del numbers, is
$\Pi^{0}_{2}$.  It is well-known that $\textbf{WKL}_0$ is conservative
over $\textbf{PRA}$ (Primitive Recursive Arithmetic) for $\Pi^{0}_{2}$
sentences (\cite[Theorem IX.3.16)]{Si}; therefore Theorem~\ref{maintheorem}
is provable in $\textbf{PRA}$.  The theory $\textbf{PRA}$ is generally
considered to correctly capture finitistic reasoning  (see eg Simpson~\cite[Remark IX.3.18]{Si}).
We conclude that Theorem~\ref{maintheorem} has a finitistic proof.

These remarks apply equally to Theorem~\ref{maintheorem2} and Proposition~\ref{sf}.

%%%%%%%%

%%%%%%%

\subsection{Conservativity of $\SPOT^+$ over $\ZF$.}
The forcing construction used to establish conservativity of $\SPOT^+$  over $\ZF$ is much more complicated because one needs to force both a generic filter $\cG$  and the validity of
\L o\'{s}'s Theorem in the corresponding ``extended ultrapower.''
We describe the appropriate forcing conditions (see~\cite{HK}).

Let $\bbQ = \{  q \in \bbV^{\om} \mid\, 
\exists k \in \om \; \forall i \in \om \, (q(i) \subseteq \bbV^{k} \,\wedge\, q(i) \neq \emptyset )\}$. 

The number $k$ is the \emph{rank} of $q$. We note that $q(i)$ for each $i \in \om$, and $q$ itself, are sets, but $\bbQ$ is a proper class.

The forcing notion $\bbH$ is defined as follows: $\bbH = \bbP \times \bbQ$ and $\la p', q'\ra \in \bbH$ \emph{extends} $\la  p, q\ra \in \bbH$ iff $p'$ extends $p$, $\rank q' = k' \ge k = \rank q$,
and for almost all $i \in p'$ and all $\la x_0,\ldots, x_{k'-1} \ra \in q'(i)$,  $\la x_0,\ldots,x_{k-1}\ra \in q(i)$.

The forcing with $\bbH$ adds many new reals; in fact, it makes all ordinals countable.

\begin{prop} \label{sf}
$\mathbf{SPOT}^+$ is a conservative extension of $\ZF$.
\end{prop}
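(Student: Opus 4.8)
The plan is to reduce the conservativity of $\SPOT^+$ over $\ZF$ to a statement about countable models, parallel to the structure of Proposition~\ref{mainprop}: it suffices to show that every countable model $\cM = (M, \in^\cM)$ of $\ZF$ extends to a model of $\SPOT^+$ in which the elements of $M$ are precisely the standard sets. Indeed, if $\SPOT^+$ proved some $\in$--sentence $\sigma$ not provable in $\ZF$, then by completeness there is a countable model $\cM \models \ZF + \neg\sigma$, and the desired extension would be a model of $\SPOT^+ + \neg\sigma$ (since $\neg\sigma$, being an $\in$--sentence with standard parameters, transfers down to and is witnessed in the standard part $M$), a contradiction. As noted in Subsection~\ref{finitistic}, the whole argument can be formalized low in the reverse-mathematics hierarchy, but the mathematical content is the model extension.

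Next I would carry out the forcing construction inside $\cM$. Since $\ZF$ alone may not prove enough choice to build an iterated ultrapower with a working \L o\'s's Theorem, one forces simultaneously (i) a nonprincipal ultrafilter $\cG$ on $\om$ and (ii) the data needed to make \L o\'s's Theorem hold for the resulting "extended ultrapower." This is exactly what the poset $\bbH = \bbP \times \bbQ$ in the excerpt is designed for: the $\bbP$-coordinate produces the generic ultrafilter as before, while the $\bbQ$-coordinate, whose conditions are $\om$-sequences of nonempty subsets of fixed finite Cartesian powers of $\bbV$, generically selects, for each instance of an existential quantifier, a "Skolem-like" choice set that survives modulo the filter; the extension relation on $\bbH$ is arranged precisely so that a condition of rank $k$ commits choices only for formulas of complexity $\le k$, and higher-rank extensions refine them compatibly. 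One then defines, in the generic extension $\cM[\cG]$ (here $\cG$ is the $\bbP$-part of the generic), the extended ultrapower of $M$ using the generically chosen witnesses, verifies \L o\'s's Theorem for it by a density argument (every formula's truth value is decided by some condition, and the $\bbQ$-data forces the Skolem step), and checks that the canonical embedding $\pi_0^\infty$ of $\bbM$ into this structure is elementary. The single-level structure $(\bbM_\infty,\in_\infty,\bbM)$ then interprets the $\st$--$\in$--language with $\bbM$ as the standard sets.

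Then I would verify the axioms of $\SPOT^+$ in this structure. Transfer ($\T$) and Generalized Transfer are immediate from elementarity of $\pi_0^\infty$ and of the intermediate embeddings, just as in Proposition~\ref{PP2}. Nontriviality ($\mathbf O$) holds because the generic ultrafilter is nonprincipal, so the diagonal class $[\mathrm{id}]$ represents a natural number above all standard ones. Standard Part ($\SP$) is the heart of the matter: given $A \subseteq \om$ in the ultrapower, one must produce a standard $B$ with the same standard members, and this is exactly where the forcing with $\bbH$ must be shown not to destroy $\SP$ — one uses that $\bbP$ (equivalently $\widetilde\bbP$) is $\om$-closed, so no new subsets of $\om$ that could "split" a standard set are added in the relevant sense, together with homogeneity of the $\bbP$-coordinate to get a standard set independent of the generic. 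For $\SN$ and $\SF$ one argues as in the two propositions proved in the excerpt: homogeneity of the forcing makes $p \Vdash \Phi^{\cM_\infty}(\check a)$ independent of $p$, so the set $\{a \in A : p \Vdash \Phi^{\cM_\infty}(\check a)\}$ defined in $\cM$ is a standard set doing the job; for $\SF$ one additionally uses that $A$ is finite so this set is genuinely an element of $M$ and external induction is available. Finally, convert the class model $\cM_\infty$ built inside the countable model $\cM[\cG]$ into an honest countable model $\widetilde{\cM}_\infty$ in the meta-theory with $\Phi^{\cM_\infty} \eqi \widetilde{\cM}_\infty \vDash \Phi$, completing the reduction.

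The main obstacle — and the reason this proof is "much more complicated" than that of Proposition~\ref{mainprop} — is the $\bbQ$-coordinate: one must show that the two-step forcing $\bbH$ genuinely forces \L o\'s's Theorem for the extended ultrapower (so that the construction even makes sense over a model with no choice) while \emph{simultaneously} not adding subsets of $\om$ in a way that would break $\SP$. These pull in opposite directions, since $\bbH$ is far from $\om$-closed (it collapses all ordinals to be countable), so the preservation of $\SP$ must come from a more delicate analysis — isolating the $\bbP$-part's $\om$-closedness and homogeneity and showing the $\bbQ$-part interacts with subsets of $\om$ only through it. I would lean on the detailed construction in Hrbacek--Katz~\cite{HK}, citing the relevant lemmas there (Theorem~B and the surrounding forcing analysis), rather than reproving them, since the present paper's contribution is the multi-level extension, and the single-level $\SPOT^+$ case is already established in that reference.
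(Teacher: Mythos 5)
Your overall strategy (reduce to countable models, force with $\bbH$, defer the single-level analysis to Hrbacek--Katz Theorem~B) is exactly how the paper proceeds: it simply cites \cite[Theorem B]{HK} for the conservativity of $\SPOT + \SN$ over $\ZF$ and only has to verify that $\mathbf{SF}$ also holds in the extended ultrapower $\mathfrak{N}$ built there. But your argument for $\mathbf{SF}$ has a genuine gap. You propose to handle $\SN$ and $\SF$ uniformly by homogeneity: ``homogeneity of the forcing makes $p \Vdash \Phi^{\cM_\infty}(\check a)$ independent of $p$, so $\{a \in A : p \Vdash \Phi^{\cM_\infty}(\check a)\}$ is a standard set doing the job.'' Homogeneity gives condition-independence only when every parameter of $\Phi$ is named by a check-name, i.e.\ only for standard parameters --- which is why it proves $\SN$. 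The whole point of $\mathbf{SF}$ is that $\Phi$ may contain \emph{arbitrary} parameters, i.e.\ names $\dot G_{n}$ for nonstandard elements of $\mathfrak{N}$, and for such names the forcing relation genuinely depends on the condition; different $p$ give different sets, and you cannot pick one arbitrarily. Indeed, if the homogeneity argument extended to arbitrary parameters it would yield Standardization with arbitrary parameters over every standard set, hence a standard nonprincipal ultrafilter over $\bbN$ (standardize $\{X \subseteq \bbN \mid \nu \in X\}$ for unlimited $\nu$), contradicting the very conservativity over $\ZF$ you are proving --- so your argument proves too much, and the restriction to standard parameters in $\SN$ versus finite $A$ in $\SF$ is essential, not incidental.

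The paper's actual argument for $\mathbf{SF}$ is different: take wlog $A = N \in \om$; for any condition and any $n \in N$ some extension decides $\Phi(\check n)$, so by induction on $N$ the conditions deciding $\Phi(\check n)$ for \emph{all} $n \in N$ simultaneously are dense; by genericity some $\la \widetilde p, \widetilde q\ra \in \cG$ does so, and $S = \{n \in N \mid \la \widetilde p, \widetilde q\ra \Vdash \Phi(\check n)\}$ is a set of the ground model, hence standard, and equals $\{n \in N \mid \mathfrak N \vDash \Phi(n)\}$. Two further points your sketch omits: the forcing relation of \cite[Definition 4.4]{HK} is defined only for $\in$--formulas, so one must first extend it to $\st$--$\in$--formulas (the paper adds a clause defining $\la p,q\ra \Vdash \st(\dot G_n)$), and while \L o\'{s}'s Theorem fails for $\st$--$\in$--formulas, the equivalence ``true in $\mathfrak N$ iff forced by some condition in $\cG$'' (the Fundamental Theorem of Extended Ultrapowers) survives and is what makes the displayed identification of $S$ legitimate. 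With the homogeneity step replaced by this density-plus-genericity argument (and the forcing relation extended as above), your proposal matches the paper.
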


\begin{proof}
Conservativity of $\SPOT + \SN$ over $\ZF$ is established in~\cite[Theorem B]{HK} via forcing with $\bbH$.
It remains only to show that $\mathbf{SF}$ also holds in the model constructed there.

In~\cite[Definition 4.4]{HK}  forcing is defined for $\in$--formulas only, but the definition can be extended to $\st$--$\in$--formulas by adding the clause

(11) $\la p, q \ra  \Vdash \st(\dot{G}_{n})$  iff 
$\rank q = k > n$ and
$$\exists x\; \forall^{\infy} i \in p \; \forall \la x_0,\ldots, x_{k-1}\ra \in q(i)\;( x_{n}= x).$$

\cite[Proposition 4.6 (``\L o\'{s}'s Theorem'')]{HK}  does not hold for $\st$--$\in$--formulas, but 
the equivalence of clauses (1) and (2) in \cite[Proposition 4.12 (The Fundamental Theorem of Extended Ultrapowers)]{HK},  remains valid ($\mathfrak{N }$ is the extended ultrapower of $\cM$):

\emph{Let $ \Phi(v_1,\ldots,v_s )$ be an $\st$--$\in$--formula with parameters from $M$. 
If  $G_{n_1},\ldots,G_{n_s} \in \mathfrak{N}$, then  the following statements are equivalent:}
\begin{enumerate}
\item
$\mathfrak{N} \vDash \Phi(G_{n_1},\ldots,G_{n_s} )$
\item
\emph{There is some $\la p, q \ra \in \cG$ such that $\la p, q \ra  \Vdash \Phi( \dot{G}_{n_1},\ldots,\dot{G}_{n_s} )$
holds in $\cM$}.
\end{enumerate}

We now prove that $\mathbf{SF}$ holds in $\mathfrak{N} $.

Wlog we can assume $A = N \in \om$. 
For every $\la p, q \ra \in \bbH$  and every $n \in N$ there is $\la p' g'\ra \le \la p, q \ra$ such that 
$\la p' g'\ra$ decides $\Phi(\check{n})$.
By induction on $N$, for every $\la p, q \ra \in \bbH$ there is $\la p_N, q_N\ra  \le \la p, q \ra$  that decides $\Phi(\check{n})$ for all $n \in N$ simultaneously.
Hence there is $\la \widetilde{p} , \widetilde{q}  \ra  \in \cG$ with this property.
Let $S = \{n \in N \,\mid\, \la \widetilde{p} , \widetilde{q}  \ra \Vdash \Phi(\check{n})\}$.
By the Fundamental Theorem, $S = \{ n \in N \,\mid\, \mathfrak{N} \vDash \Phi(n)\}$.
\end{proof}

\textbf{Final Remark.}
Labels $\sfa, \sfc$ in $\SPOTS$ range over standard finite sets. This implies that the levels of standardness are enumerated by standard natural numbers. It is an open question whether one could allow labels to range over all finite sets, ie, to have levels of standardness indexed by all natural numbers.  
Theories of this kind have been developed in Hrbacek~\cite{H2b} on the basis of $\ZFC$. It seems likely that the present work could be generalized analogously.

%%%%%%%%%%

\section*{Acknowledgments} I am grateful to Mikhail Katz for many helpful comments and suggestions.

%%%%%%%%%%%%%%%%%%%%%%%%%%%%%%%%%%%%%%%%%%%%%%%%%%%%%%%%%%%

\bibliographystyle{jloganal}

\begin{thebibliography}{99} 

\bibitem{BBE} 
J Bair, P B\l aszczyk, R\, Ely, V Henry, V Kanovei, K Katz, M Katz, 
S Kutateladze, T McGaffey, P Reeder, D Schaps, D Sherry, S Shnider, 
\emph{Interpreting the infinitesimal mathematics of Leibniz and Euler},  Journal for
General Philosophy of Science 48, 2 (2017) 195--238
\url{http://dx.doi.org/10.1007/s10838-016-9334-z} 
\url{https://arxiv.org/abs/1605.00455}


\bibitem{20z} J Bair, P B{\l}aszczyk, R Ely, M Katz, K Kuhlemann,
  \emph{Procedures of Leibnizian infinitesimal calculus: An account in
    three modern frameworks},  British Journal for the History of
  Mathematics 36, 3 (2021), 170--209
  \url{https://doi.org/10.1080/26375451.2020.1851120}
  \url{https://arxiv.org/abs/2011.12628}
\url{https://mathscinet.ams.org/mathscinet-getitem?mr=4353153}

\bibitem{VBS} 
B van den Berg, E Briseid,  P Safarik, \emph{A functional interpretation for nonstandard arithmetic}, 
Annals of Pure and Applied Logic 163, 12 (2012) 1962--1994
  \url{https://doi.org/10.1016/j.apal.2012.07.003}

\bibitem{CK} 
C\, C Chang,  H\,J Keisler, \emph{Model Theory, third ed.}, Studies in Logic and the Foundations of Mathematics 73, North Holland, Amsterdam (1990), 649 pages

\bibitem{C} 
A Connes, \emph{Noncommutative geometry and reality}. Journal of Mathematical Physics 36(11) (1995),6194--6231
 \url{https://doi.org/10.1063/1.531241}

\bibitem{Cor} 
P Corazza, \emph{Forcing with non-well-founded models}, Australasian Journal of Logic 5 (2007) 20--57
 \url{https://doi.org/10.26686/ajl.v5i0.1784}

\bibitem{Cor2} 
P. Corazza, \emph{Indestructibility of wholeness}, Fundamenta Mathematicae 252 (2021) 147--170
\url{https://doi.org/10.4064/fm604-3-2020}

\bibitem{E}
A Enayat, \emph{From bounded arithmetic to second order arithmetic via automorphisms},
in: A. Enayat, I. Kalantari, and M. Moniri (Eds.), Logic in Tehran, Lecture Notes in Logic,
vol 26, ASL and AK Peters (2006)
\url{https://doi.org/10.1017/9781316755747.008 }

\bibitem{E2}
A Enayat, M Kaufmann and Z McKenzie, \emph{Iterated ultraowers for the masses}, 
Archive for Mathematical Logic 57 (2018) 557--576
\url{https://doi.org/10.1007/s00153-017-0592-1}

\bibitem{MO}
 J Hanson, Answer to Question ``Proof of global Peano
  theorem in ZF,'' MathOverflow, 2023
  \url{https://mathoverflow.net/a/455875/28128}

  \bibitem{H4}
K Hrbacek, \emph{Internally iterated ultrapowers,} in \emph{Nonstandard Models of Arithmetic and Set Theory,} edited by A Enayat and R Kossak,
Contemporary Mathematics 361, American Mathematical Society, Providence, RI,
2004, 87--120
\url{https://doi.org/10.1090/conm/361/06590 }

\bibitem{H2b}
K Hrbacek, \emph{Relative set theory: Internal view,} 
Journal of Logic and Analysis 1:8 (2009),  1--108
\url{https://doi.org/10.4115/jla.2009.1.8} 

\bibitem{H1}
K Hrbacek, \emph{Axiom of Choice in nonstandard set theory},Journal of Logic and Analysis 4:8 (2012), 1--9
\url{https://doi.org/10.4115/jla.2012.4.8}

\bibitem{HLOD}
K Hrbacek, O Lessmann and R O'Donovan,  \emph{Analysis using Relative Infinitesimals},
Chapman and Hall, 2015, 316 pages

\bibitem{HK}
K Hrbacek and M\,G Katz, 
\emph{Infinitesimal analysis without the Axiom of Choice},  
Annals of Pure and Applied Logic 172, 6  (2021) 
\url{https://doi.org/10.1016/j.apal.2021.102959}
\url{https://arxiv.org/abs/2009.04980}

\bibitem{HK2}
K Hrbacek, M\,G Katz, \emph{Effective infinitesimals in $\bbR$}, Real Analysis Exchange 48(2), 2023, 1--15
\url{https://doi.org/10.14321/realanalexch.48.2.1671048854}

\bibitem{HK3}
K Hrbacek, M\,G Katz, \emph{Peano and Osgood theorems via effective infinitesimals}, Journal of Logic and Analysis 15:6 (2023) 1--19
\url{https://doi.org/10.4115/jla.2023.15.6}

\bibitem{J2}
T Jech, \emph{The Axiom of Choice}, North-Holland, Amsterdam, 1973, 202 pages

\bibitem{J}
T Jech, \emph{Set Theory}, Academic Press, New York, 1978, 621pages

\bibitem{RJ}
R Jin, \emph{A simple combinatorial proof of Szemer\'{e}di's theorem
via three levels of infinities},
Discrete Analysis, 2023:15, 27 pages 
\url{https://arXiv.org/abs/2203.06322v1} 
\url{https://doi.org/10.48550/arXiv.2203.06322} 

Editorial Introduction at 
\url{https://discreteanalysisjournal.com/article/87772-a-simple-combinatorial-proof-of-szemeredi-s-\\theorem-via-three-levels-of-infinities}.

  \bibitem{RJ2}
R Jin, invited talk at the conference \emph{Logical methods in Ramsey Theory and related topics}, Pisa July 9 - 11, 2023
\url{https://events.dm.unipi.it/event/151/}

\bibitem{KKM}
V Kanovei, M\, G Katz, T Mormann, \emph{Tools, objects and chimeras: Connes on the role of hyperreals in mathematics}, Foundations of  Science (2013) 18:259--296
\url{https://doi.org/10.1007/S10699-012-9316-5}

\bibitem{KR}
V Kanovei, M Reeken, \emph{Nonstandard Analysis, Axiomatically}, 
Springer-Verlag, Berlin Heidelberg New York (2004) 408 pages

\bibitem{KKS}
M\,G Katz, K Kuhlemann, D Sherry, \emph{A Leibniz/NSA comparison}, 
London Mathematical Society Newsletter  511 (2024), 16:12, 23--27

\bibitem{KL}
M\,G Katz, E Leichtnam, \emph{Commuting and noncommuting infinitesimals}, 
American Mathematical Monthly (2013) 120:7, 631--641
\url{https://doi.org/10.4169/amer.math.monthly.120.07.631}

\bibitem{KS}
M Katz, D Sherry, \emph{Leibniz's infinitesimals: Their fictionality, their
modern implementations, and their foes from Berkeley to Russell and beyond},
Erkenntnis 78, 3 (2013), 571--625
\url{http://dx.doi.org/10.1007/s10670-012-9370-y}, 
\url{https://arxiv.org/abs/1205.0174}

\bibitem{K3} H\, J Keisler, \emph{Nonstandard arithmetic and Reverse
Mathematics}, Bulletin of Symbolic Logic 12, 1 (2006), 100--125
\url{https://doi.org/10.2178/bsl/1140640945 }

\bibitem{N}
E Nelson, \emph{Radically Elementary Probability Theory}, Annals of Mathematics Studies 117, Princeton University Press, Princeton, NJ, 1987, 98 pages

\bibitem{Per}
Y P\'{e}raire, \emph{Th\'{e}orie relative des ensembles internes}, 
Osaka Journal of Mathematics 29 (1992), 267--297 

\bibitem{Sa} 
S Sanders, \emph{The unreasonable effectiveness of Nonstandard Analysis,} Journal of Logic and Computation 30, 1 (2020)  459--524
\url{https://doi.org/10.1093/logcom/exaa019}, 
\url{http://arxiv.org/abs/1508.07434}

\bibitem{Sa2} 
S Sanders, \emph{Reverse formalism 16}, Synthese 197 (2):497--544 (2020)
   \url{https://doi.org/10.1007/s11229-017-1322-2}

\bibitem{Si}
S\, G Simpson, \emph{Subsystems of Second Order Arithmetic}, second edition, Cambridge University Press, New York, 2009, 444 pages

\bibitem{SS}
R Sommer, P Suppes, \emph{Finite Models of Elementary Recursive Nonstandard Analysis}, 
Notas de la Sociedad Matematica de Chile 15 (1996), 73--95

\bibitem{Spr} 
M Spector, \emph{Extended ultrapowers and the
Vop\v{e}nka--Hrb\'{a}\v{c}ek theorem without choice}, Journal of
Symboic Logic 56, 2 (1991), 592--607
\url{https://doi.org/10.2307/2274701}

\end{thebibliography}

\end{document}